\newtheorem{theorem}{Theorem}[section]
\newtheorem{lemma}[theorem]{Lemma}
\newtheorem{corollary}[theorem]{Corollary}
\newtheorem{proposition}[theorem]{Proposition}
\theoremstyle{definition}
\newtheorem{definition}[theorem]{Definition}
\newtheorem{remark}[theorem]{Remark} 
\numberwithin{equation}{section}
\newcommand\C{\mathbf{C}} 
\newcommand\Q{\mathbf{Q}}
\newcommand\Z{\mathbf{Z}}
\newcommand\twomatr[4]{\begin{pmatrix}#1&#2\\ #3&#4 \end{pmatrix}}
\newcommand\stwomatr[4]{\left(\begin{smallmatrix}#1&#2\\  
                               #3&#4 \end{smallmatrix}\right)}
\newcommand\notdividing{{\not |\,}}
\newcommand\tensor{\otimes}
\newcommand\isomorphic{\cong}
\DeclareMathOperator{\Sym}{Sym}
\DeclareMathOperator{\tr}{tr}
\DeclareMathOperator{\linalgspan}{span}
\DeclareMathOperator{\Res}{Res}
\DeclareMathOperator{\Real}{Re}
\DeclareMathOperator{\Divisor}{div}
\newcommand\intersect{\cap}
\newcommand\abs[1]{{\left|#1\right|}}
\DeclareMathOperator{\Gal}{Gal}
\newcommand\compose{\circ}
\newcommand\Projective{{\bf P}} 
\newcommand\fieldk{k} 
\newcommand\fieldbigK{K}
\newcommand\fieldbigKbar{{\overline{\fieldbigK}}} 
\newcommand\fieldbigKl{{\fieldbigK_\myl}} 
\newcommand\E{E} 
\newcommand\Pzero{{\pointP_0}} 
\newcommand\myl{\ell} 
\newcommand\wtj{j} 
\newcommand\El{\E[\myl]} 
\newcommand\En{\E[n]}
\newcommand\Enl{\E[n\myl]}
\newcommand\divD{D} 
\newcommand\divE{E} 
\newcommand\thelambda{\lambda}
\newcommand\lambdaD{\thelambda_\divD}
\newcommand\lambdaE{\thelambda_\divE}
\newcommand\lambdaP{\thelambda_\pointP}
\newcommand\lambdaQ{\thelambda_\pointQ}
\newcommand\lambdaR{\thelambda_\pointR}
\newcommand\lambdaS{\thelambda_\pointS}
\newcommand\themu{\mu}
\newcommand\muD{\themu_\divD}
\newcommand\muP{\themu_\pointP}
\newcommand\muQ{\themu_\pointQ}
\newcommand\muR{\themu_\pointR}
\newcommand\thenu{\nu}
\newcommand\nuD{\thenu_\divD}
\newcommand\nuP{\thenu_\pointP}
\newcommand\nuQ{\thenu_\pointQ}
\newcommand\nuR{\thenu_\pointR}
\newcommand\thef{f}
\newcommand\fD{\thef_\divD}
\newcommand\fE{\thef_\divE}
\newcommand\fP{\thef_\pointP}
\newcommand\fQ{\thef_\pointQ}
\newcommand\theg{g}
\newcommand\gP{\theg_\pointP}
\newcommand\gQ{\theg_\pointQ}
\newcommand\pointP{P} 
\newcommand\pointQ{Q} 
\newcommand\pointR{R} 
\newcommand\pointS{S} 
\newcommand\pointT{T} 
\newcommand\xP{x_\pointP} 
\newcommand\yP{y_\pointP} 
\newcommand\zP{z_\pointP} 
\newcommand\xQ{x_\pointQ} 
\newcommand\yQ{y_\pointQ} 
\newcommand\xR{x_\pointR} 
\newcommand\yR{y_\pointR} 
\newcommand\ringR{{\mathcal{R}}}
\newcommand\ringRl{{\ringR_\myl}}
\newcommand\acoeff{a} 
\newcommand\bcoeff{b} 
\newcommand\scalarlambda{u} 
\newcommand\modforms{\mathcal{M}}
\newcommand\cuspforms{\mathcal{S}}
\newcommand\Half{\mathcal{H}} 
\newcommand\Gammal{{\Gamma(\myl)}} 
\newcommand\intm{m} 
\newcommand\intn{n}
\newcommand\torsi{{a_1}}
\newcommand\torsj{{a_2}}
\newcommand\Eis{G} 
\newcommand\Ej{\Eis_\wtj}
\newcommand\Eone{\Eis_1}
\newcommand\Etwo{\Eis_2}
\newcommand\Ethree{\Eis_3}
\newcommand\Efour{\Eis_4}
\newcommand\Esix{\Eis_6}
\newcommand\trstar{\tau_\pointR^*}
\newcommand\LL{\mathcal{L}}
\newcommand\LLhat{\hat{\mathcal{L}}} 
\newcommand\p{p} 
\newcommand\mubar{\overline{\mu}} 
\newcommand\boldmu{\boldsymbol{\mu}} 
\newcommand\Ohat{\hat{\mathcal{O}}} 
\newcommand\Espace{\mathcal{E}} 
\newcommand\zirc{0} 
\begin{document}

\title{Moduli interpretation of Eisenstein series}

\author{Kamal Khuri-Makdisi}
\address{Mathematics Department and Center for Advanced Mathematical Sciences,
American University of Beirut, Bliss Street, Beirut, Lebanon}
\email{kmakdisi@aub.edu.lb}
\subjclass[2000]{11F11, 14H52, 14K10, 11F67, 11F25, 11G18}
\thanks{September 22, 2011}

\begin{abstract}
Let $\ell \geq 3$.  Using the moduli interpretation, we define certain
elliptic modular forms of level $\Gamma(\ell)$ over
any field $k$ where $6\ell$ is invertible and $k$ contains
the $\ell$th roots of unity.  These forms generate a graded algebra 
$\mathcal{R}_\ell$, which, over $\mathbf{C}$, is generated by the
Eisenstein series of weight~$1$ on $\Gamma(\ell)$.  The main result of
this article is that, when $k=\mathbf{C}$, the ring $\mathcal{R}_\ell$
contains all modular forms on $\Gamma(\ell)$ in weights $\geq 2$.  The
proof combines algebraic and analytic techniques, including
the action of Hecke operators and nonvanishing of $L$-functions.  Our
results give a systematic method to produce models for the modular
curve $X(\ell)$ defined over the $\ell$th cyclotomic field, using only
exact arithmetic in the $\ell$-torsion field of a single
$\mathbf{Q}$-rational elliptic curve $E^0$.
\end{abstract}

\maketitle

\section{Introduction}
\label{section1}

Given a lattice $L \subset \C$, let $\wp$ and $\zeta$ be the
Weierstrass functions with respect to $L$.  A classical formula (see,
e.g., equation IV.3.6 of~\cite{Chandrasekharan}), which we reprove in
Corollary~\ref{corollary3.6} below, states that if
$\alpha,\beta,\gamma \in \C - L$ and $\alpha + \beta + \gamma = 0$,
then 
\begin{equation}
\label{equation1.1}
\frac{-1}{2} \cdot 
\frac{\wp'(\alpha) - \wp'(\beta)}{\wp(\alpha) - \wp(\beta)} 
= \zeta(\alpha) + \zeta(\beta) + \zeta(\gamma).
\end{equation}
Let us
temporarily call the above expression $\thelambda =
\thelambda_{\alpha, \beta, \gamma, L}$.  From the series for
$\zeta$, one can show that $\thelambda$ is equal to the
absolutely convergent series 
\begin{equation}
\label{equation1.2}
\thelambda
 =\zeta(\alpha) + \zeta(\beta) + \zeta(\gamma)
 = \sum_{\omega \in L}{}' \left(
           \frac{1}{\omega + \alpha}
         + \frac{1}{\omega + \beta}
         + \frac{1}{\omega + \gamma}
         - \frac{3}{\omega}
                        \right),
\end{equation}
where $\sum'$ means that one omits the term $3/\omega$
from the summand when $\omega = 0$.  The individual sums
such as $\sum_\omega 1/(\omega + \alpha)$ do not converge; however, if
$\alpha, \beta, \gamma \in \frac{1}{\myl}L$ for some $\myl$,
then the sums can be regularized by Hecke's method, and
$\thelambda$ is a suitable weight~$1$ Eisenstein series on $\Gammal$;
we prove this in Section~\ref{section2}.    Now view the
elliptic curve $\E = \C/L$ as a plane cubic (the Weierstrass model)
via $\wp$ and $\wp'$.  Then, essentially, the coordinates
$\wp(\alpha), \wp'(\alpha)$ of torsion points in $\El$
are Eisenstein series in weights $2$ and~$3$, while the weight~$1$
Eisenstein series $\thelambda$ is the slope of the line joining
the torsion points attached to $\alpha$ and $\beta$.
Hence Eisenstein series 
of weights $\leq 3$ can be computed from the Weierstrass model of the
varying elliptic curve $\E$ and its $\myl$-torsion, in other words
from the moduli problem that is parametrized by the modular curve
$X(\myl)$.  This is the ``moduli  interpretation'' referred to in our
title; we give a uniform moduli interpretation of Eisenstein series in all
weights.  

In Section~\ref{section3} below, we in fact obtain
a family of ``moduli-friendly'' modular forms on $\Gammal$ over a more
general base field $\fieldk$, as coefficients in the Laurent
expansions of certain elliptic functions, which make sense
algebraically in the function field $\fieldk(\E)$.
We show that the modular forms we construct all belong
to a certain graded ring $\ringRl$ of modular forms on $\Gammal$, and
prove that the algebra $\ringRl$ is generated by the Eisenstein series
of weight~$1$ on $\Gammal$, when $\myl \geq 3$
(Theorem~\ref{theorem3.12}).  This result is similar to the
results proved in~\cite{BorisovGunnells}, where Borisov and Gunnells
define and study toric modular forms on $\Gamma_1(\myl)$, and
prove that the ring of toric modular forms is generated by certain 
Eisenstein series in weight~$1$, and that it is stable under the
Hecke operators $T_n$ for $\Gamma_1(\myl)$; their proofs rely
on $q$-expansions of modular forms.  Thus the results in this article
include a generalization to $\Gammal$ of the ring of toric modular
forms introduced in~\cite{BorisovGunnells}.
(See also~\cite{Cornelissen}, which studies the ring generated by
weight~$1$ Eisenstein series in the Drinfeld modular case.)  The above
article~\cite{BorisovGunnells}, as well as the subsequent
articles~\cite{BorisovGunnellsNonvanishing,BorisovGunnellsHigherWeight,
BorisovGunnellsPopescuEquations}, were a definite inspiration for
several of the results in this article, even though our proofs tend
to proceed along different lines (most notably, without any
$q$-expansions).

Sections \ref{section4} and~\ref{section5} contain the technical
heart of this article.
Continuing the analogy with~\cite{BorisovGunnells}, we also prove
that $\ringRl$ is stable under the Hecke algebra.
We first combine various relations between the modular forms in a
pleasantly intricate way to deduce
Hecke invariance in weights $2$ and~$3$
(Propositions \ref{proposition4.6}, \ref{proposition4.8},
and~\ref{proposition4.11}).  
Combining this result with analytic
techniques (Rankin-Selberg and nonvanishing of $L$-functions), along
with standard results on sufficiently positive line bundles on
curves, we prove that over $\C$, the ring $\ringRl$ contains
\emph{all} modular forms of weights $\wtj \geq 2$
(Theorem~\ref{theorem5.1}), and thus ``misses'' only the cusp forms in
weight~$1$.  This is the main result of our article.
Theorem~\ref{theorem5.1} is analogous to the results
in~\cite{BorisovGunnellsNonvanishing,BorisovGunnellsHigherWeight}
for toric modular forms on $\Gamma_1(\myl)$: the authors
prove there that the cuspidal part of the toric modular forms in
weight~$2$ consists of all cusp forms with nonvanishing central
$L$-value, while in weight $\wtj \geq 3$, the cuspidal part is all of
$\cuspforms_\wtj(\Gamma_1(\myl))$.  Their approach also uses
nonvanishing of $L$-functions, but is otherwise somewhat different.

We next apply Theorem~\ref{theorem5.1} to produce models of the modular
curve $X(\myl)$.  Our final result,
Theorem~\ref{theorem5.5}, can be stated in the following
striking manner: for $\myl \geq 3$, the slopes of lines joining the
$\myl$-torsion points of any \emph{one} elliptic curve over $\Q$
with $j \neq 0, 1728$ (for example, $\E^\zirc: y^2 = x^3 + 3141x + 5926$)
contain enough information to deduce equations for $X(\myl)$, which
parametrizes the $\myl$-torsion of \emph{all} elliptic curves.
We can find the equations for $X(\myl)$ using only exact computations
in the number field $\Q(\E^\zirc[\myl])$, and we obtain a model for
$X(\myl)$ over the cyclotomic 
field $\Q(\boldmu_\myl)$.  No infinite series or other
approximations are involved.  The model we obtain for $X(\myl)$
is in the form called ``Representation~B'' in~\cite{KKMasymptotic}
(where we use it to compute efficiently in the Jacobian of $X(\myl)$).
The idea is that the different level structures on $\E^\zirc$ give
rise to many points on $X(\myl)$ (embedded projectively via
$\modforms_2(\Gammal)$), and that only one curve can be reasonably
interpolated through these points. 

Since our results are moduli-friendly and mostly algebraic
(except for the analytic Theorem~\ref{theorem5.1}), much of our
theory works for a general base field $\fieldk$,
provided $6\myl \neq 0$ in $\fieldk$, and $\fieldk$ contains
the $\myl$th roots of unity.
Our approach proceeds entirely via
moduli of elliptic curves, and never involves $q$-expansions.  We
hope that these ideas can generalize to
modular forms on indefinite quaternion algebras and Shimura
curves.

\textbf{Acknowledgements.}
This research was partially supported by the University Research Board at the
American University of Beirut, and the Lebanese National Council for
Scientific Research, through the grants ``Equations for modular and
Shimura curves''.  The author is grateful to L. Merel for helpful
discussions about the Hecke action, and to R. Ramakrishna for useful
comments on the manuscript.

\section{Eisenstein series and Laurent expansions of elliptic functions}
\label{section2}

Our first goal in this section is to reexpress the sum defining
Eisenstein series so that it converges absolutely for all weights
$\wtj \geq 1$, without the need for Hecke's method of analytic
continuation when $\wtj \leq 2$.
As usual, let $\tau \in \Half$, where $\Half$ is the 
complex upper half plane, and consider the lattice
$L = L_\tau = \Z + \Z\tau$.

\begin{definition}
\label{definition2.1}
For $\torsi, \torsj \in \Z$, let $\alpha = \alpha_\tau
  = (\torsi \tau + \torsj)/\myl \in \frac{1}{\myl}L_\tau$.
For an integer $\wtj \geq 1$ and $s \in \C$, recall,
following~\cite{HeckeEisenstein}, the Eisenstein series of weight~$\wtj$ on
the principal congruence subgroup $\Gammal$, where $\myl \geq 1$:
\begin{equation}
\label{equation2.1}
\begin{split}
\Ej&(\tau, \alpha; s)
   = \sum_{\omega \in L_\tau}{}'
       \frac{1}{(\alpha+\omega)^\wtj \abs{\alpha+\omega}^{2s}}
\\
   &= \sum_{(\intm, \intn) \in \Z^2}{}'
       \bigl[(\intm + \torsi/\myl)\tau + \intn + \torsj/\myl\bigr]^{-\wtj}
       \bigl|(\intm + \torsi/\myl)\tau + \intn + \torsj/\myl\bigr|^{-2s},
\\
\end{split}
\end{equation}
\begin{equation}
\label{equation2.2}
\Ej(\tau, \alpha) = \Ej(\tau, \alpha; 0),
                           \quad \text{by analytic continuation}.
\end{equation}
Here the notation $\sum_\omega'$ omits $\omega = -\alpha$ in case we have
$\alpha \in L_\tau$; similarly for $\sum_{(\intm, \intn)}'$.
If the series already converges absolutely for $s=0$, we can 
write directly
\begin{equation}
\label{equation2.2.2}
\Ej(\tau, \alpha) = \sum_{\omega \in L_\tau}{}' (\alpha + \omega)^{-\wtj},
\quad \text{for } \wtj \geq 3.
\end{equation}
In general, for 
$\wtj \geq 1$, Hecke showed that $\Ej(\tau,\alpha;s)$ can be
analytically continued to all $s \in \C$, and that
$\Eone(\tau,\alpha)$ is a holomorphic function of $\tau$, while
$\Etwo(\tau, \alpha)$ is the sum of 
$-2\pi i / ( \tau - \overline{\tau})$ and a holomorphic function of $\tau$.
\end{definition}

The parameter $\alpha$ corresponds to a point
$\pointP_\alpha$ in the $\myl$-torsion of the elliptic curve
$\E = \E_\tau = \C/L_\tau$.  We can also define the $\Ej$ for a divisor,
either of numbers $\alpha \in \C$ or of points $\pointP \in \E$.  
We introduce the following notation to distinguish sums in the
additive groups $\C$ and $\E$ from the formal sums of points in
divisors.  
\begin{itemize}
\item
A divisor on $\C$ will be written $\tilde{\divD} = \sum_{\alpha} m_\alpha
(\alpha)$, and its image in $\E$ is $\divD =
\sum_{\alpha} m_\alpha 
(\pointP_\alpha)$.  (Here $m_\alpha \in \Z$.)  The $\alpha$ need not
be distinct modulo $L$, so cancellation can occur in the formal sum
for $\divD$.  We call $\tilde{\divD}$ a \emph{lift} of $\divD$.
\item
We denote by $\Pzero \in \E$ the additive identity in that group.
\item
The group operations of addition, inversion, and multiplication by an
integer $n \in \Z$ on points $\pointP, \pointQ \in \E$ are given by
\begin{equation}
\label{equation2.3}
\pointP, \pointQ \mapsto \pointP \oplus \pointQ,
\qquad
\pointP \mapsto \ominus \pointP = [-1]\pointP,
\qquad
\pointP \mapsto [n] \pointP = \pointP \oplus \cdots \oplus \pointP.
\end{equation}
\end{itemize}

\begin{definition}
\label{definition2.2}
Let $\divD$ be a divisor on $\E$ that is supported on the $\ell$-torsion
points $\El$, and choose any lift
$\tilde{\divD} = \sum_\alpha m_\alpha (\alpha)$ of $\divD$ to $\C$. 
We then define the following Eisenstein series on $\Gammal$,
depending linearly on $\divD$:
\begin{equation}
\label{equation2.4}
\Ej(\tau, \divD; s) = \sum_\alpha m_\alpha \Ej(\tau, \alpha; s),
\qquad\qquad
\Ej(\tau, \divD) = \Ej(\tau, \divD; 0).
\end{equation}
It is immediate that the definition does not depend on the choice of lift
$\tilde{\divD}$.
We remind the reader that the values
$\alpha \in \frac{1}{\myl} L_\tau$
(and corresponding points $\pointP_\alpha \in \El$) 
vary with $\tau$, as in Definition~\ref{definition2.1}.
\end{definition}

Our observation is that suitable choices of the lift $\tilde{\divD}$ lead
to series for $\Ej(\tau, \divD; s)$ with good convergence for all
$\wtj \geq 1$.  To motivate this, recall that a divisor 
$\divD = \sum_\alpha m_\alpha (\pointP_\alpha)$ on $\E$ is principal, of
the form $\divD = \Divisor(\thef)$ for some meromorphic function
$\thef$ on $\E$, if and only if
\begin{equation}
\label{equation2.5}
\deg \divD := \sum_\alpha m_\alpha = 0, \qquad\qquad
\bigoplus \divD := \bigoplus_\alpha [m_\alpha] \pointP_\alpha = \Pzero.
\end{equation}
The second sum above is evaluated in the group $\E$.

\begin{definition}
\label{definition2.3}
Let $\divD$ be a principal divisor on $\E$.  A \emph{principal lift} of
$\divD$ is a divisor $\tilde{\divD} = \sum_\alpha m_\alpha (\alpha)$ on
$\C$ satisfying
\begin{equation}
\label{equation2.6}
\sum_\alpha m_\alpha = 0, \qquad\qquad
\sum_\alpha m_\alpha \alpha = 0
        \quad \text{ (both sums evaluated in } \C). 
\end{equation}
An arbitrary lift $\tilde{\divD}$ would \textit{a priori} merely satisfy
$\sum_\alpha m_\alpha \alpha \in L$.
\end{definition}

Principal lifts always exist.  For example, let
$\alpha = (\torsi\tau + \torsj)/\myl$, and take the principal divisor
$\divD = \myl(\pointP_\alpha) - \myl(\Pzero)$.  Then one possible principal
lift of $\divD$ is
\begin{equation}
\label{equation2.7}
\tilde{\divD} = (\myl + 1)(\alpha) - (\alpha + \torsi \tau + \torsj)
   - \myl(0).
\end{equation}

\begin{proposition}
\label{proposition2.4}
Given a principal divisor $\divD$ supported on $\El$, choose a principal
lift $\tilde{\divD}$ as in~\eqref{equation2.6}.  Then
\begin{equation}
\label{equation2.8}
\sum_{\alpha}
    \frac{m_\alpha}{(\alpha + \omega)^\wtj \abs{\alpha + \omega}^{2s}}
 = O \left( \frac{1}{\abs{\omega}^{2s+j+2}} \right),
   \text{ for large } \abs{\omega}.
\end{equation}
We hence obtain for all $\wtj \geq 1$ the following convergent double
series (where the notation $\sum_\alpha'$ means that we omit $\alpha =
-\omega$ if it appears in the inner sum):
\begin{equation}
\label{equation2.9}
\Ej(\tau, \divD)
  = \left.
      \sum_{\omega \in L} \sum_{\alpha}{}'
        \frac{m_\alpha}{(\alpha + \omega)^\wtj \abs{\alpha + \omega}^{2s}}
    \right|_{s=0}
 =  \sum_{\omega \in L}
       \left( \sum_{\alpha}{}' \frac{m_\alpha}{(\alpha + \omega)^\wtj}
       \right).   
\end{equation}
Note that the outer sum over $\omega$ is absolutely convergent for
$\Real s > -\wtj/2$, even though the double sum converges only
conditionally.
\end{proposition}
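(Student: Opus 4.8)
The plan is to establish the pointwise estimate~\eqref{equation2.8} first; the two convergence assertions then follow almost formally. Let $C$ be the maximum of $\abs{\alpha}$ over the finitely many $\alpha$ occurring in the chosen principal lift $\tilde\divD$, and restrict attention to $\omega\in L$ with $\abs{\omega}\geq 2C$, so that $\abs{\alpha+\omega}\geq\abs{\omega}/2$ for every such $\alpha$ (the finitely many remaining $\omega$ play no role in any convergence statement). On the disc $\abs{\alpha}\leq C$ the function $g_{\omega,s}(\alpha)=(\alpha+\omega)^{-\wtj}\abs{\alpha+\omega}^{-2s}$ is smooth, and I would Taylor-expand it to second order about $\alpha=0$ in the variables $\alpha,\bar\alpha$, writing $g_{\omega,s}(\alpha)=g_{\omega,s}(0)+A\alpha+B\bar\alpha+R(\alpha)$. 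Each Wirtinger derivative of $g_{\omega,s}$ carries an extra factor of size $O(\abs{\alpha+\omega}^{-1})=O(\abs{\omega}^{-1})$, so the second-order partials, and hence $R(\alpha)$, are $O(\abs{\alpha}^2\abs{\omega}^{-\wtj-2\Real s-2})$ uniformly on the disc. Multiplying by $m_\alpha$ and summing, the term $g_{\omega,s}(0)\sum_\alpha m_\alpha$ vanishes by the first condition in~\eqref{equation2.6}, the term $A\sum_\alpha m_\alpha\alpha$ by the second, and $B\sum_\alpha m_\alpha\bar\alpha$ because $\sum_\alpha m_\alpha\bar\alpha=\overline{\sum_\alpha m_\alpha\alpha}=0$ (the $m_\alpha$ being integers); what remains is $\sum_\alpha m_\alpha R(\alpha)=O(\abs{\omega}^{-\wtj-2\Real s-2})$, since $\sum_\alpha\abs{m_\alpha}\abs{\alpha}^2$ is a fixed constant. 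This is~\eqref{equation2.8}.

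For the convergence, note that when $\Real s$ is large each inner series $\sum_\omega'(\alpha+\omega)^{-\wtj}\abs{\alpha+\omega}^{-2s}$ converges absolutely; since the $\alpha$-sum is finite, Fubini gives $\Ej(\tau,\divD;s)=\sum_{\omega\in L}\sum_\alpha{}' m_\alpha(\alpha+\omega)^{-\wtj}\abs{\alpha+\omega}^{-2s}$ there. By~\eqref{equation2.8} --- and uniformly for $s$ in compact subsets --- the right-hand side is dominated, up to finitely many terms, by the lattice sum $\sum_\omega\abs{\omega}^{-\wtj-2\Real s-2}$, which converges exactly when its exponent exceeds $2$, i.e.\ for $\Real s>-\wtj/2$. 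So the rearranged series converges absolutely and locally uniformly there and is holomorphic; agreeing with the analytically continued $\Ej(\tau,\divD;s)$ on the overlap with the region of large $\Real s$, it coincides with it on all of $\Real s>-\wtj/2$. Since $-\wtj/2<0$ for every $\wtj\geq 1$, specializing to $s=0$ gives~\eqref{equation2.9}, with the outer $\omega$-sum (absolutely) convergent. The parenthetical remark about conditional convergence is then automatic: for $\wtj\leq 2$ even $\sum_\omega\abs{\alpha+\omega}^{-\wtj}$ diverges, so $\sum_{\omega,\alpha}\abs{m_\alpha}\abs{\alpha+\omega}^{-\wtj}$ diverges and the double series at $s=0$ converges only after the indicated grouping (inner sum over $\alpha$, then over $\omega$).

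The one point requiring care is this identification of the rearranged series with Hecke's analytic continuation: it is exactly why~\eqref{equation2.8} must be proved with the implied constant uniform in $s$ on compacta, so that the partial sums converge locally uniformly and the limit is automatically holomorphic. Everything else is a routine Taylor computation, whose sole content is that the two principal-lift conditions in~\eqref{equation2.6} are precisely what is needed to kill the non-summable leading terms --- of sizes $\abs{\omega}^{-\wtj-2\Real s}$ and $\abs{\omega}^{-\wtj-2\Real s-1}$ --- in the inner sum.
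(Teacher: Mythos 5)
Your proof is correct and follows essentially the same route as the paper: a Taylor expansion of $(\alpha+\omega)^{-\wtj}\abs{\alpha+\omega}^{-2s}$ to second order in $\alpha,\overline{\alpha}$, with the constant and linear terms killed by the two conditions in~\eqref{equation2.6} and the remainder bounded by $O(\abs{\omega}^{-\wtj-2s-2})$. Your added care in identifying the rearranged absolutely convergent sum with Hecke's analytic continuation (via local uniformity in $s$ and holomorphy) spells out a step the paper leaves implicit, but it is the same argument.
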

\begin{proof}
The following expansion follows from Taylor's theorem, or from the
binomial series for $(1+\alpha/\omega)^{-\wtj}
\abs{1+\alpha/\omega}^{-2s} =
(1+\alpha/\omega)^{-s-\wtj}(1+\overline{\alpha}/\overline{\omega})^{-s}$:
\begin{equation}
\label{equation2.11}
\frac{1}{(\alpha+\omega)^\wtj \abs{\alpha+\omega}^{2s}} 
 = \frac{1}{\omega^\wtj \abs{\omega}^{2s}}
   - \frac{(s+\wtj)\alpha}{\omega^{\wtj+1}\abs{\omega}^{2s}}
   - \frac{s\>\overline{\alpha}}{\omega^{\wtj-1}\abs{\omega}^{2s+2}}
   + O\left( \frac{1}{\abs{\omega}^{2s+\wtj+2}} \right).
\end{equation}
The estimate holds for $\abs{\omega} > 2\abs{\alpha}$,
with an implied constant in the $O(\cdot)$ that depends on $\alpha$,
$\wtj$, and $s$, and is uniform in $\tau$ when $\tau$ is restricted to
a compact subset of $\Half$.  Our result now follows by
multiplying~\eqref{equation2.11} by $m_\alpha$ and summing over $\alpha$.
\end{proof}

\begin{remark}
\label{remark2.5}
Note that we always obtain holomorphic functions of $\tau$ above.  In
the setting of weight~$\wtj=2$, this arises because we have always
taken $\deg \divD = 0$, so the nonholomorphic terms cancel.
\end{remark}

Proposition~\ref{proposition2.4} allows us to rederive Hecke's second
definition of weight~$1$ Eisenstein series as ``division values'' of the
Weierstrass $\zeta$ function in Section~6 of~\cite{HeckeZurTheorie},
as well as Corollary~3.4.24  of~\cite{Katz}; we reprove those results
in~\eqref{equation2.14} below.  Recall the series for $\zeta(z)$:
\begin{equation}
\label{equation2.12}
\zeta(z)
  = \frac{1}{z} +
    \sum_{0 \neq \omega \in L} \left[
         \frac{1}{z - \omega} + \frac{1}{\omega} + \frac{z}{\omega^2}
         \right]
  = \frac{1}{z} +
    \sum_{0 \neq \omega \in L} \frac{z^2}{(z-\omega)\omega^2}.
\end{equation}
It is a standard fact that
$\zeta(z + \intm \tau + \intn) = \zeta(z) + 2\intm \eta_2 + 2\intn \eta_1$
for $\intm, \intn \in \Z$ (with ``constants'' $\eta_i = \eta_i(L)$
satisfying $2\eta_1\tau - 2\eta_2 = 2\pi i$).  Here
we follow the notation of Chapter~IV of \cite{Chandrasekharan}; note that
Hecke and other authors use $\eta_i$ for what we have called $2\eta_i$.
Moreover, $\zeta$ is an odd function of $z$, and in fact its
Laurent expansion near $0$ is $\zeta(z) = z^{-1} + O(z^3)$.

\begin{corollary}
\label{corollary2.6}
Let $\divD$ be a principal divisor supported on $\El$, and take a
principal lift $\tilde{\divD} = \sum_\alpha m_\alpha (\alpha)$ for
which every instance of $\Pzero$ in $\divD$ is lifted to $\alpha =
0$.  Then 
\begin{equation}
\label{equation2.13}
\Eone(\tau,\divD) = \sum_{\alpha \neq 0} m_\alpha \zeta(\alpha).
\end{equation}
Moreover, let $\pointP_\alpha \in \El - \{\Pzero\}$, with any choice
of lift
$\alpha = (\torsi \tau + \torsj)/\myl$ with $\torsi, \torsj \in \Z$.
Then
\begin{equation}
\label{equation2.14}
\begin{split}
\Eone(\tau,\pointP_\alpha)
         &=
     \zeta(\alpha)
   + \frac{1}{\myl} \left[
         \zeta(\alpha) - \zeta(\alpha + \torsi \tau + \torsj)  
     \right] \\
         &=
     \zeta\left(\frac{\torsi \tau + \torsj}{\myl}\right)
   - \frac{\torsi}{\myl} \cdot 2\eta_2
   - \frac{\torsj}{\myl} \cdot 2\eta_1.\\
\end{split}
\end{equation}
\end{corollary}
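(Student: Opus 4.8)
The plan is to deduce both identities from Proposition~\ref{proposition2.4}, the absolutely convergent expansion~\eqref{equation2.12} of $\zeta$, and the quasi-periodicity $\zeta(z+\intm\tau+\intn)=\zeta(z)+2\intm\eta_2+2\intn\eta_1$. The key point is that, thanks to Proposition~\ref{proposition2.4}, the series for $\Eone(\tau,\divD)$ is already absolutely convergent once rearranged as in~\eqref{equation2.9}, so the remaining work is purely a matter of careful bookkeeping with no analytic subtleties.

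For~\eqref{equation2.13}, fix the given principal lift $\tilde{\divD}=\sum_\alpha m_\alpha(\alpha)$ and let $m_0$ be the multiplicity of the point $(0)$ in it (equivalently the multiplicity of $\Pzero$ in $\divD$, since every copy of $\Pzero$ was lifted to $0$). By Proposition~\ref{proposition2.4} with $\wtj=1$ and $s=0$ we have $\Eone(\tau,\divD)=\sum_{\omega\in L}\sum_{\alpha}{}'\frac{m_\alpha}{\alpha+\omega}$, with the outer sum absolutely convergent because each inner sum is $O(\abs{\omega}^{-3})$. On the other side I would expand, for each $\alpha\neq0$, the value $\zeta(\alpha)=\frac1\alpha+\sum_{0\neq\omega\in L}\bigl(\frac1{\alpha+\omega}-\frac1\omega+\frac\alpha{\omega^2}\bigr)$ from~\eqref{equation2.12} (using the change of variable $\omega\mapsto-\omega$ to put it in this form); since this series converges absolutely and the sum over $\alpha$ is finite, the two summations may be interchanged freely. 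For a fixed $\omega\neq0$ the inner sum then becomes $\sum_{\alpha\neq0}\frac{m_\alpha}{\alpha+\omega}-\frac1\omega\sum_{\alpha\neq0}m_\alpha+\frac1{\omega^2}\sum_{\alpha\neq0}m_\alpha\alpha$, and the principal-lift relations~\eqref{equation2.6} give $\sum_{\alpha\neq0}m_\alpha=-m_0$ and $\sum_{\alpha\neq0}m_\alpha\alpha=0$, so it collapses to $\sum_{\alpha}\frac{m_\alpha}{\alpha+\omega}$ (now also including $\alpha=0$, whose term is $m_0/\omega$), which is exactly $\sum_{\alpha}{}'\frac{m_\alpha}{\alpha+\omega}$ since the only lattice point in the support of $\tilde{\divD}$ is $0$. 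The leftover piece $\sum_{\alpha\neq0}\frac{m_\alpha}\alpha$ is precisely the $\omega=0$ term of~\eqref{equation2.9}. Summing over all $\omega\in L$ yields $\sum_{\alpha\neq0}m_\alpha\zeta(\alpha)=\Eone(\tau,\divD)$, which is~\eqref{equation2.13}.

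For~\eqref{equation2.14}, the divisor $(\pointP_\alpha)$ is not principal, so I would instead apply the first part to the principal divisor $\divD=\myl(\pointP_\alpha)-\myl(\Pzero)$ (principal because $\pointP_\alpha\in\El$). Using the trivial lift $\myl(\alpha)-\myl(0)$ together with the additivity~\eqref{equation2.4}, and the fact that $\Eone(\tau,\Pzero)=\Eone(\tau,0;0)=0$ — the series $\Eone(\tau,0;s)=\sum_{\omega}{}'\omega^{-1}\abs{\omega}^{-2s}$ is sent to its negative by $\omega\mapsto-\omega$, hence vanishes for $\Real s$ large, and one analytically continues — one gets $\Eone(\tau,\divD)=\myl\,\Eone(\tau,\pointP_\alpha)$. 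On the other hand, the lift $\tilde{\divD_3}=(\myl+1)(\alpha)-(\alpha+\torsi\tau+\torsj)-\myl(0)$ from~\eqref{equation2.7} is a principal lift of this $\divD$ in which every copy of $\Pzero$ is lifted to $0$, so~\eqref{equation2.13} applies and gives $\Eone(\tau,\divD)=(\myl+1)\zeta(\alpha)-\zeta(\alpha+\torsi\tau+\torsj)$. Dividing by $\myl$ and peeling off one copy of $\zeta(\alpha)$ gives the first line of~\eqref{equation2.14}; the second line then follows from the quasi-periodicity of $\zeta$ with $\intm=\torsi$, $\intn=\torsj$, together with $\zeta(\alpha)=\zeta\bigl((\torsi\tau+\torsj)/\myl\bigr)$.

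I expect the only genuinely delicate point to be matching the primed sums: one must check that for $\omega\neq0$ the value $-\omega$ never occurs among the lifted points $\alpha$ — true, since a torsion point other than $\Pzero$ cannot lift to a lattice point, while the copies of $\Pzero$ were all lifted to $0$ — so that the prime in~\eqref{equation2.9} silently disappears at exactly the place where the relation $\sum_{\alpha\neq0}m_\alpha=-m_0$ restores the missing $\alpha=0$ term. Everything else is absolutely convergent, the conditional convergence having already been absorbed into Proposition~\ref{proposition2.4}, so no further attention to rearrangements is needed.
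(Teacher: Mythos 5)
Your argument is correct and follows essentially the same route as the paper's proof: expand $\zeta(\alpha)$ via~\eqref{equation2.12} with $\omega\mapsto-\omega$, interchange the (finite) sum over $\alpha$ with the absolutely convergent sum over $\omega$, use the principal-lift relations~\eqref{equation2.6} to collapse to the form~\eqref{equation2.9}, and then specialize to the lift $\tilde{\divD_3}$ together with $\Eone(\tau,\Pzero)=0$ (by oddness) to get~\eqref{equation2.14}. Your extra remark about matching the primed sums is a correct and slightly more explicit accounting of a point the paper passes over quickly.
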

\begin{proof}
Write
$\tilde{\divD} = m_0 (0) + \sum_{\alpha \neq 0} m_\alpha (\alpha)$,
with $\alpha \neq 0 \implies \alpha \notin L$ by our assumption on
$\tilde{\divD}$.  Changing the sign of $\omega$
in~\eqref{equation2.12}, we obtain
\begin{equation}
\label{equation2.15}
\sum_{\alpha \neq 0} m_\alpha \zeta(\alpha)
 = \sum_{\alpha \neq 0} \frac{m_\alpha}{\alpha}
 + \sum_{\omega \neq 0}
   \sum_{\alpha \neq 0}
      \left[
         \frac{m_\alpha}{\alpha + \omega} 
        - \frac{m_\alpha}{\omega}
        + \frac{m_\alpha \alpha}{\omega^2}
      \right].
\end{equation}
The change of order of summation is justified by the good convergence
of the series for $\zeta$ and because the sum over $\alpha$ is finite.
Since $\tilde{\divD}$ satisfies~\eqref{equation2.6}, we have
$\sum_{\alpha \neq 0} m_\alpha = - m_0$ and $\sum_{\alpha \neq 0}
m_\alpha \alpha = 0$, which allows us to rewrite the above sum in the
form of~\eqref{equation2.9} (at the cost of replacing absolute
convergence with conditional convergence), and hence to
obtain~\eqref{equation2.13}.  Now apply this result in the case
$\divD = \myl (\alpha) - \myl(0)$, using the principal lift
$\tilde{\divD}$ from~\eqref{equation2.7}.  This
yields~\eqref{equation2.14}, because
$\Eone(\tau, \myl (\alpha) - \myl(0))
= \myl \Eone(\tau, \alpha) - \myl \Eone(\tau, 0)$ and
$\Eone(\tau,0) = 0$ (more generally, $\Ej(\tau, -\beta; s) = (-1)^\wtj
\Ej(\tau, \beta; s)$).
\end{proof}

We now turn to the second goal of this section, which is to express
Eisenstein series on $\Gammal$ as coefficients in Laurent expansions of
certain elliptic functions.  The reader is also referred to~\cite{Pasol}
for some related results from a different viewpoint.

\begin{definition}
\label{definition2.7}
Let $\divD$ be a principal divisor on $\E$, and let $m_0$ be the
multiplicity of $\Pzero$ in $\divD$.  We define an element
$\fD$ of the function field of $\E$ by the requirements
\begin{equation}
\label{equation2.16}
\Divisor(\fD) = \divD, \qquad\qquad
\fD = z^{m_0}
      ( 1 + O(z) ),\quad \text{ near } z=0.
\end{equation}
Here the first requirement determines $\fD$ up to a nonzero constant
factor, and the second requirement (viewing $\fD$ as an
elliptic function on $\C$ with respect to $L$) normalizes the constant
so as to fix our choice of $\fD$.  Our normalization ensures that for
principal divisors $\divD$ and $\divE$, 
\begin{equation}
\label{equation2.16.5}
  \thef_{\divD + \divE} = \fD \cdot \fE.
\end{equation}
\end{definition}
The precise normalization of the constant factor in
$\fD$ will be needed in later sections of this article; it is not
essential in this section, where we mainly consider the
logarithmic differential $d\fD/\fD$.

\begin{theorem}
\label{theorem2.8}
Let $\divD$ be a principal divisor, and take a principal lift
$\tilde{\divD} = \sum_\alpha m_\alpha (\alpha)$.   Make the same
assumption on $\tilde{\divD}$ as in Corollary~\ref{corollary2.6}.
Then
\begin{equation}
\label{equation2.17}
\frac{d\fD}{\fD} = \sum_{\alpha}  m_\alpha \zeta(z-\alpha) \, dz
 = \sum_{\omega \in L} \left[
      \sum_{\alpha} \frac{m_\alpha}{z - \alpha - \omega}
   \right] dz,
\end{equation}
where the last series has similar convergence properties to the series
of~\eqref{equation2.9}.  Furthermore, if $\divD$ is supported on
$\El$, then the Laurent series expansion of $d\fD/\fD$ near $z=0$ is
\begin{equation}
\label{equation2.18}
\frac{d\fD}{\fD} = \left(
        \frac{m_0}{z} - \sum_{\wtj \geq 1} \Ej(\tau,\divD) z^{\wtj-1}
        \right) dz.
\end{equation}
\end{theorem}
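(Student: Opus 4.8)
The plan is to establish the two displayed equalities in \eqref{equation2.17} first, and then extract the Laurent expansion \eqref{equation2.18} by expanding $\zeta(z-\alpha)$ around $z=0$ and collecting powers of $z$. For the first equality in \eqref{equation2.17}: the differential $\sum_\alpha m_\alpha \zeta(z-\alpha)\,dz$ is, by the quasi-periodicity relation $\zeta(z+\intm\tau+\intn) = \zeta(z) + 2\intm\eta_2 + 2\intn\eta_1$ recalled before Corollary~\ref{corollary2.6}, invariant under $z \mapsto z + \intm\tau + \intn$ precisely because $\tilde{\divD}$ is a principal lift, so $\sum_\alpha m_\alpha = 0$ kills the additive constants. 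Hence it descends to a meromorphic differential on $\E$. Its only poles are simple, located at the $\pointP_\alpha$, with residue $m_\alpha$ at $\pointP_\alpha$ (using $\zeta(w) = w^{-1} + O(w^3)$ near $w=0$), so it has the same divisor and residues as $d\fD/\fD$; the difference is a holomorphic differential on $\E$ with vanishing residues, hence a constant multiple of $dz$, and matching the Laurent expansion at $z=0$ (where $d\fD/\fD = (m_0/z + \cdots)\,dz$ by \eqref{equation2.16} and the normalization choice, and the $\zeta$-sum likewise has principal part $m_0/z$ by our assumption on the lift) forces that constant to be zero. This is essentially the classical partial-fraction decomposition of an elliptic function's logarithmic derivative.

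For the second equality in \eqref{equation2.17}, I would substitute the series \eqref{equation2.12} for $\zeta(z-\alpha)$, i.e. $\zeta(z-\alpha) = (z-\alpha)^{-1} + \sum_{0\neq\omega} [(z-\alpha-\omega)^{-1} + \omega^{-1} + (z-\alpha)\omega^{-2}]$, multiply by $m_\alpha$, and sum over $\alpha$; the correction terms $\sum_\alpha m_\alpha/\omega$ and $\sum_\alpha m_\alpha(z-\alpha)/\omega^2$ vanish after summing over $\alpha$, again by the principal-lift conditions \eqref{equation2.6}, exactly as in the proof of Corollary~\ref{corollary2.6}. The interchange of the (finite) $\alpha$-sum with the $\omega$-sum is harmless, and the resulting conditional convergence in $\omega$ is controlled by Proposition~\ref{proposition2.4} (applied to the translated divisor $\sum_\alpha m_\alpha(\alpha - z)$, which is still a principal lift for $z$ near $0$), which also justifies the phrase ``similar convergence properties to \eqref{equation2.9}.''

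For \eqref{equation2.18}, assuming now $\divD$ is supported on $\El$ so that each $\alpha \in \frac{1}{\myl}L$, I would take the first form $\sum_\alpha m_\alpha \zeta(z-\alpha)\,dz$ and expand near $z=0$. Split off the term with $\alpha=0$ (which contributes $m_0\zeta(z)\,dz = (m_0/z + O(z^3))\,dz$) from the terms with $\alpha\neq 0$, where $\zeta$ is holomorphic; write $\zeta(z-\alpha) = -\zeta(\alpha - z)$ and Taylor-expand in $z$, using that the $\wtk$-th derivative of $\zeta$ is, up to sign and factorial, the Eisenstein-type sum $\sum_\omega' (\alpha+\omega)^{-\wtk-1}$ — more precisely $\zeta^{(\wtk)}(\alpha) = (-1)^\wtk \wtk!\sum_{\omega\in L}(\alpha - \omega)^{-\wtk-1}$ for $\wtk \geq 1$ (and after flipping $\omega$, this is $(-1)^\wtk\wtk!$ times a sum of the shape in \eqref{equation2.1}). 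Collecting the coefficient of $z^{\wtj-1}\,dz$ then produces exactly $-\sum_\alpha m_\alpha [\text{sum defining }\Ej(\tau,\alpha)] = -\Ej(\tau,\divD)$ for $\wtj\geq 2$, with the $\wtj=1$ coefficient handled by Corollary~\ref{corollary2.6}: the constant term (coefficient of $z^0\,dz$) of $\sum_{\alpha\neq 0} m_\alpha\zeta(z-\alpha)$ at $z=0$ is $\sum_{\alpha\neq 0}m_\alpha\zeta(-\alpha) = -\sum_{\alpha\neq 0}m_\alpha\zeta(\alpha) = -\Eone(\tau,\divD)$. One should double-check the bookkeeping for $\wtj=2$, where the relevant sum is only conditionally convergent and one must invoke the Hecke-regularized value (equivalently, Proposition~\ref{proposition2.4} with $\deg\divD = 0$), as flagged in Remark~\ref{remark2.5}.

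The main obstacle I anticipate is not any single hard idea but the careful handling of convergence and summation order in the passage between the three expressions: the sums $\sum_\omega 1/(z-\alpha-\omega)$ individually diverge, and for $\wtj = 1, 2$ the Eisenstein series are only conditionally/Hecke-convergent, so one must consistently work with the grouped sums $\sum_\omega(\sum_\alpha \cdots)$ whose convergence is guaranteed by Proposition~\ref{proposition2.4}, and argue that term-by-term Taylor expansion in $z$ commutes with the outer $\omega$-sum. This is where one leans hardest on Proposition~\ref{proposition2.4} and the uniformity of its error estimate for $\tau$ in a compact set and $z$ near $0$.
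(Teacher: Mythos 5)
There is a genuine gap in your argument for the first equality of~\eqref{equation2.17}. You correctly reduce to showing that
$\frac{d\fD}{\fD} - \sum_\alpha m_\alpha \zeta(z-\alpha)\,dz$ is a holomorphic differential on $\E$, hence equal to $C\,dz$ for some constant $C$; but your claim that ``matching the Laurent expansion at $z=0$'' forces $C=0$ does not work, because you only match the principal parts ($m_0/z$ on both sides), and the term $C\,dz$ is holomorphic and therefore invisible to the principal part. To pin down $C$ by comparing expansions at $z=0$ you would need to know the \emph{constant} term of $d\fD/\fD$ at $z=0$ independently, and that constant term is precisely $-\Eone(\tau,\divD)$ --- which is the $\wtj=1$ case of~\eqref{equation2.18}, i.e.\ part of what is being proved. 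This is exactly the pitfall the paper flags in Remark~\ref{remark2.9}: an argument based only on the locations and residues of the poles cannot distinguish $d\fD/\fD$ from $d\fD/\fD + C\,dz$. The paper's proof sidesteps this entirely by writing $\fD(z) = C\prod_\alpha \sigma(z-\alpha)^{m_\alpha}$ (valid because the right-hand side is elliptic, by the principal-lift conditions~\eqref{equation2.6}, and has divisor $\divD$), and then taking logarithmic differentials, which kills the multiplicative constant and yields the first equality of~\eqref{equation2.17} with no additive ambiguity. Your argument can be repaired --- for instance by integrating both differentials over the two period paths and using that $\oint d\fD/\fD \in 2\pi i\Z$ while the $\zeta$-sum has vanishing periods by quasi-periodicity of $\sigma$ and~\eqref{equation2.6}, forcing $C \in 2\pi i \Z \cap 2\pi i \tau^{-1}\Z = \{0\}$ --- but some such extra input is indispensable.

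The remainder of your proposal is sound. The second equality of~\eqref{equation2.17} is handled exactly as in the paper (substitute~\eqref{equation2.12} and kill the correction terms using~\eqref{equation2.6}). For~\eqref{equation2.18} you Taylor-expand $\zeta(z-\alpha)$ in $z$ and identify $\zeta^{(\wtk)}(\alpha)$ with Eisenstein-type sums, invoking Corollary~\ref{corollary2.6} for the $\wtj=1$ coefficient and Proposition~\ref{proposition2.4} for the conditionally convergent $\wtj=2$ case; the paper instead computes $\Res_{z=0}\bigl[z^{-\wtj}\,d\fD/\fD\bigr]$ term-by-term in the outer $\omega$-sum of~\eqref{equation2.17} via geometric series and compares directly with~\eqref{equation2.9}. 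The paper's route is slightly cleaner since it never needs the regularized values of $\wp$ separately, but both work once the first equality of~\eqref{equation2.17} is secured.
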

\begin{proof}
It is classical (see, for example, Section IV.3 of \cite{Chandrasekharan})
that we can express $\fD$ up to a nonzero constant $C = C_\tau$ in
terms of the Weierstrass $\sigma$ function, provided that we have
taken a principal lift $\tilde{\divD}$:
\begin{equation}
\label{equation2.19}
\fD(z) = C \prod_{\alpha} \Bigl[\sigma(z - \alpha)^{m_\alpha}\Bigr].
\end{equation}
Taking logarithmic differentials yields the first equality
in~\eqref{equation2.17}, since $\sigma'/\sigma = \zeta$.  The second
equality now follows from substituting the series for $\zeta$ and
using the fact that
$\sum_\alpha [m_\alpha/\omega + m_\alpha(z-\alpha)/\omega^2] = 0$. 

We can now prove~\eqref{equation2.18}.  The first term in the Laurent
expansion is easy, and the other terms are equivalent to showing
that $\Res_{z=0} \left[ z^{-\wtj} \frac{d\fD}{\fD} \right] = 
-\Ej(\tau,\divD)$ for $\wtj \geq 1$.  This residue can be computed by a
contour integral on a small circle enclosing $z=0$.   Since the sum over
$\omega$ in~\eqref{equation2.17} converges well, we are justified in
computing the residue term-by-term, using the expansion
$\frac{1}{z-\beta}
 = - \frac{1}{\beta} - \frac{z}{\beta^2} - \frac{z^2}{\beta^3}
      - \cdots$
for $\beta \neq 0$ to compute residues for each inner sum over $\alpha$
that occurs as a term in the sum over $\omega$.  Comparing
with~\eqref{equation2.9} yields the desired result. 
\end{proof}

\begin{remark}
\label{remark2.9}
For $\wtj \geq 2$, one can give a more classical proof that the
coefficient of $z^\wtj$ in $d\fD/\fD$ is $-\Ej(\tau,\divD)$, 
by taking the contour 
integral of $z^{-\wtj}\frac{d\fD}{\fD}$ around a large parallelogram with
center at $0$ and sides tending to infinity.
\end{remark}

The above theorem appears to relate Laurent expansions of elliptic
functions only to those Eisenstein series $\Ej(\tau,\divD)$ where
$\divD$ is principal.  On the other hand, $\Ej(\tau,\divD)$ depends
linearly on $\divD$ (as does $d\fD/\fD$,
by~\eqref{equation2.16.5}), so we are led to consider linear
combinations of Eisenstein series.

\begin{proposition}
\label{proposition2.10}
Let $\myl \geq 2$.  Then for all $\wtj \geq 1$, the span of the
Eisenstein series  
$\{ \Ej(\tau,\divD) \mid \divD \text{ principal, supported on } \El\}$
consists of all holomorphic Eisenstein series of weight~$\wtj$ on
$\Gammal$. 
\end{proposition}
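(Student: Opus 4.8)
The plan is to reduce the statement to two facts: that a divisor supported on $\El$ is \emph{principal} exactly when it lies in the $\Q$-span of the degree-zero divisors supported on $\El$, and that the division-value series $\Ej(\tau,\pointP)$ for $\pointP\in\El$ are, up to a fixed power of $\myl$, precisely the classical weight-$\wtj$ Eisenstein series attached to the residue classes in $(\Z/\myl\Z)^2$. For the first point, I would argue as follows. Since $\divD\mapsto\Ej(\tau,\divD)$ is $\Z$-linear, the $\C$-span in question depends only on the $\Q$-span of the principal divisors supported on $\El$. By~\eqref{equation2.5}, such a $\divD$ is principal iff $\deg\divD=0$ and $\bigoplus\divD=\Pzero$; since the summation map $\bigoplus$ carries the degree-zero divisors supported on $\El$ onto the finite group $\El$, the principal ones form a finite-index subgroup of the degree-zero ones, so the two have the same $\Q$-span. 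Thus it suffices to identify $\linalgspan_\C\{\Ej(\tau,\divD)\mid\deg\divD=0,\ \supp\divD\subseteq\El\}$ with the space of holomorphic weight-$\wtj$ Eisenstein series on $\Gammal$.

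For the inclusion ``$\subseteq$'', I would read off the second line of~\eqref{equation2.1} at $s=0$ (Definition~\ref{definition2.1}) to see that each $\Ej(\tau,\pointP)$, $\pointP\in\El$, is the classical weight-$\wtj$ Eisenstein series for the corresponding residue class $(\torsi,\torsj)$ modulo $\myl$, hence a weight-$\wtj$ Eisenstein series on $\Gammal$, holomorphic in $\tau$ when $\wtj\neq2$. For $\wtj=2$, Definition~\ref{definition2.1} records that \emph{every} $\Etwo(\tau,\pointP)$ equals the fixed non-holomorphic term $-2\pi i/(\tau-\overline\tau)$ plus a holomorphic function, so $\Etwo(\tau,\divD)=(\deg\divD)\bigl(-2\pi i/(\tau-\overline\tau)\bigr)+(\text{holomorphic})$ is holomorphic precisely when $\deg\divD=0$ (cf.\ Remark~\ref{remark2.5}). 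So in every weight the degree-zero combinations lie in the holomorphic Eisenstein subspace.

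For ``$\supseteq$'', I would invoke the classical description of that subspace: for $\wtj\neq2$ it is spanned by the individual $\Ej(\tau,\pointP)$, $\pointP\in\El$ (Hecke's theory when $\wtj=1$), and for $\wtj=2$ its holomorphic part is spanned by the differences $\Etwo(\tau,\pointP)-\Etwo(\tau,\pointQ)$. The case $\wtj=2$ is then immediate, those differences being exactly the $\Etwo(\tau,\divD)$ with $\deg\divD=0$. For $\wtj\neq2$ it remains to exhibit $\Ej(\tau,\Pzero)$ as a degree-zero combination, and for this I would use the single relation $\sum_{\pointP\in\El}\Ej(\tau,\pointP)=\myl^\wtj\,\Ej(\tau,\Pzero)$, obtained by summing the series of Definition~\ref{definition2.1} over all $\myl^2$ residue classes (Hecke's analytic continuation being linear). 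Rewriting the left-hand side as $\sum_{\pointP}\bigl(\Ej(\tau,\pointP)-\Ej(\tau,\Pzero)\bigr)+\myl^2\,\Ej(\tau,\Pzero)$ yields $(\myl^\wtj-\myl^2)\,\Ej(\tau,\Pzero)=\sum_{\pointP}\bigl(\Ej(\tau,\pointP)-\Ej(\tau,\Pzero)\bigr)$, and the scalar $\myl^\wtj-\myl^2$ is nonzero because $\wtj\neq2$ and $\myl\ge2$; hence $\Ej(\tau,\Pzero)$ does lie in the span of the degree-zero combinations, which finishes the proof.

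The delicate part, and the likely main obstacle, is the low weights $\wtj\le2$: one needs the precise classical description of the holomorphic Eisenstein subspace there (that it is spanned by the Hecke-regularized division-value series, resp.\ their differences when $\wtj=2$), and one has to keep track of the common non-holomorphic term of $\Etwo$. It is worth noting that the relation in the previous paragraph degenerates exactly in weight~$2$, where $\myl^\wtj=\myl^2$: this is precisely why the degree-zero condition is indispensable in weight~$2$ (to kill the non-holomorphic term) and automatic in every other weight.
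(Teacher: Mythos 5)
Your proposal is correct and follows essentially the same route as the paper: both reduce to degree-zero combinations $\sum_\pointP c_\pointP \Ej(\tau,\pointP)$ with $\sum_\pointP c_\pointP=0$ (the paper via the principal divisors $\myl(\pointP)-\myl(\Pzero)$, you via the finite-index observation, which amounts to the same thing), handle weight~$2$ by cancellation of the non-holomorphic terms, and recover $\Ej(\tau,\Pzero)$ for $\wtj\neq 2$ from the factor $\myl^\wtj-\myl^2\neq 0$ coming from the divisor $\bigl[\sum_{\pointP\in\El}(\pointP)\bigr]-\myl^2(\Pzero)$, i.e.\ the distribution relation you wrote down. No gaps.
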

\begin{proof}
The Eisenstein series for the principal divisors
$\{\myl (\pointP) - \myl(\Pzero) \mid \pointP \in \El\}$ and
$\divD = \bigl[\sum_{\pointP \in \El} (\pointP)\bigr] - \myl^2 (\Pzero)$
are $\{\myl\Ej(\tau,\pointP) - \myl\Ej(\tau, \Pzero) \mid \pointP \in \El\}$
and $\Ej(\tau, \divD) =(\myl^\wtj - \myl^2)\Ej(\tau, \Pzero)$.  Their span
includes all the $\Ej(\tau,\pointP)$, as desired, except when $\wtj = 2$.
However, in the case $\wtj=2$, the holomorphic Eisenstein series are
spanned precisely by the $\Ej(\tau,\pointP) - \Ej(\tau, \Pzero)$, since we
want the nonholomorphic terms $-2\pi i / ( \tau - \overline{\tau})$ to
cancel.
\end{proof}

\begin{remark}
\label{remark2.11}
It is convenient to allow general $\divD$ that are supported on $\El$.
Then $\divD$ may not have degree zero, let alone be principal.  However,
the divisor $\divD_z := \divD - (\deg\divD) (\Pzero)$ has degree zero, and
$\myl \divD_z$ is principal.  Hence we can formally define
$\fD = (\thef_{\myl \divD_z})^{1/\myl}$ for compatibility
with~\eqref{equation2.16.5}.  Note that if $\bigoplus \divD \neq
\Pzero$, then $\fD$ cannot be an elliptic function with respect to
$L$; its formal logarithmic derivative is nonetheless always periodic 
with respect to $L$, and we can simply take $d\fD/\fD =
(1/\myl)d\thef_{\myl \divD_z}/\thef_{\myl \divD_z}$ as a definition.  With
this convention, \eqref{equation2.18} continues to hold (with coefficients
$\Ej(\tau,\divD_z)$), and we can
obtain an analog of~\eqref{equation2.17} as a series with good
convergence properties, similarly to our derivation
of~\eqref{equation2.14}.

Looking a bit further, we note that $\thef_{\myl \divD_z}$ has
zeros and poles with multiplicity everywhere divisible by $\myl$.
Hence $\fD$ makes sense as a meromorphic function on $\C$.  We use
this to normalize the choice of $\myl$th root $\fD$ as
in~\eqref{equation2.16}, so that its Laurent series begins with
$z^{m_0-\deg \divD}$, since $m_0 - \deg \divD$ is the multiplicity of
$\Pzero$ in $\divD_z$.
Then these general $\fD$ are products of (positive and negative) powers of
the $\fP = (\thef_{\myl(\pointP) - \myl(\Pzero)})^{1/\myl}$,
for $\pointP \in \El - \{\Pzero\}$.  For such a ``basic'' $\fP$,
Theorem~\ref{theorem2.8} then states that
\begin{equation}
\label{equation2.19.5}
\begin{split}
\frac{d\fP}{\fP}
 &= z^{-1}\Bigl(
  -1 - \sum_{\wtj \geq 1}
         \bigl(\Ej(\tau,\pointP) - \Ej(\tau,\Pzero)\bigr) z^{\wtj}
   \Bigr) dz\\
 &= z^{-1}\Bigl(-1 - \Eone(\tau,\pointP)z
              + \bigl(-\Etwo(\tau,\pointP) + \Etwo(\tau,\Pzero)\bigr)z^2
              + \cdots
          \Bigr)dz.
\end{split}
\end{equation}
Note that $\thef_{\Pzero} = 1$, so~\eqref{equation2.19.5} does not
quite hold for $\pointP = \Pzero$; the first coefficient in the
Laurent expansion becomes $0$ instead of $-1$.

The functions $\fP$ above are still elliptic functions, however with
respect to  the sublattice $\myl L$ of $L$.  The behavior of $\fP$ under
translations by $L$ is described by 
a Weil pairing; see Definition~\ref{definition4.1} in
Section~\ref{section4} below, where we work instead with the function
$\gP(z) = \fP(\myl z)$, which is elliptic with respect to the full
lattice $L$.
The approach of working with $\fP$ that are periodic with respect to
$\myl L$ is used in the work of Borisov and Gunnells on toric modular
forms~\cite{BorisovGunnells}.  They use the function $\vartheta =
\vartheta_{11}$ to write down what amounts to the same
function as $\fP$ when $\pointP = a/\myl + L$ is in the subgroup of
$\El$ generated by $\pointP_{1/\myl}$.  They then use the expansion of
$d\fP/\fP$ at $z = 0$ to define their toric modular forms
$s_{a/\myl}^{(k)}$ (see Section~4.4 of~\cite{BorisovGunnells}).  Thus
their $s_{a/\myl}^{(k)}$ are the same as our $\Ej(\tau,\divD)$
for the divisor $\divD = [a]\pointP_{1/\myl} - \Pzero$.  In particular,
the $s_{a/\myl}^{(k)}$ are Eisenstein series 
with respect to the larger group $\Gamma_1(\myl)$;
Borisov and Gunnells recognize this from the $q$-expansions, while our
approach is more direct.  An advantage of working with $\Gammal$ is
that we obtain the full space of holomorphic
Eisenstein series
in all weights, by
Proposition~\ref{proposition2.10}; see also Theorems \ref{theorem3.9}
and~\ref{theorem3.12} below.  In contrast, the ring of toric
modular forms on $\Gamma_1(\myl)$ does not always contain all
Eisenstein series on that group: see Remark~4.13
of~\cite{BorisovGunnellsNonvanishing}.
\end{remark}

\begin{remark}
\label{remark2.12}  
One can find the Laurent expansion of $\fD$ by 
formally exponentiating the integral of $d\fD/\fD$.  Keeping track of
the algebra, one obtains that $\fD$ has an expansion of the following
form near $z=0$:
\begin{equation}
\label{equation2.20}
\fD = z^{m_0}
      ( 1 + F_1(\tau) z + F_2(\tau) z^2 + \cdots ),
\end{equation}
where $F_\wtj$ is a modular form on $\Gammal$ of weight~$\wtj$, expressible
as a polynomial in the $\Ej(\tau,\divD)$.  This approach is used
extensively in~\cite{BorisovGunnells}.  In the next section, we study the
Laurent series of $\fD$ directly in a purely algebraic setting over a more
general field $\fieldk$, and reformulate and extend the results of this
section algebraically.  For now, we simply note the result for $\fP$,
obtained from~\eqref{equation2.19.5}:
\begin{equation}
\label{equation2.21}
\fP = z^{-1}\left[ 1
               - \Eone z
               + \frac{(\Eone^2 - \tilde{\Etwo})}{2} z^2
               - \left( \frac{\Ethree}{3}
                      - \frac{\Eone \tilde{\Etwo}}{2}
                      + \frac{\Eone^3}{6}
                  \right) z^3
           + \cdots \right]
\end{equation}
where we wrote $\Eone = \Eone(\tau,\pointP)$, $\tilde{\Etwo} =
\Etwo(\tau,\pointP) - \Etwo(\tau,\Pzero)$, and 
$\Ethree = \Ethree(\tau,\pointP)$
to save space.
%
%
\end{remark}

\section{Algebraic reformulation and the ring $\ringRl$ of modular
forms}
\label{section3}

Our first step in ``algebrizing'' the results of the previous section is
to normalize the equation of our elliptic curve $\E$.  We embed $\E$ into
the projective plane $\Projective^2$ as follows (note the factor $1/2$):
\begin{equation}
\label{equation3.1}
z \mapsto \pointP_z = [\wp(z;L): (1/2) \wp'(z;L) : 1] = [x(z): y(z): 1].
\end{equation}
As usual, $\Pzero = [0:1:0]$ is the identity element.  The affine algebraic
equation of $\E$ and the invariant differential $\omega$ on $\E$ are
\begin{equation}
\label{equation3.2}
\E: y^2 = x^3 + \acoeff x + \bcoeff,  \qquad \omega = dx/(2y) = dz.
\end{equation}
Here $\acoeff = \acoeff(\tau)$ and $\bcoeff = \bcoeff(\tau)$ are, up to
constant factors, the Eisenstein series of level~$1$ and weights $4$
and~$6$, respectively:
\begin{equation}
\label{equation3.3}
\acoeff(\tau) = -15 \Efour(\tau,0)
              = -15 \sum_{0 \neq \omega \in L_\tau} \omega^{-4},
\qquad
\bcoeff(\tau) = -35 \Esix(\tau,0).
\end{equation}
The symbol $\omega$ in~\eqref{equation3.3} denotes an element of $L$, but
for the rest of this article it will refer almost exclusively to the
invariant differential, as in~\eqref{equation3.2}.

We now regard the
family $\{\E_\tau \mid \tau \in \Half\}$ as a single elliptic curve $\E$
over the rational function field $\C(\acoeff, \bcoeff)$ in two independent
transcendental variables.  We can work with more general fields $\fieldk$
instead of $\C$; in that case, $\E$ is a curve over the field 
$\fieldbigK = \fieldk(\acoeff,\bcoeff)$. 
Since we wish to use Weierstrass
normal form for $\E$, and also need to consider the $\myl$-torsion
throughout, we require $6\myl$ to be invertible in $\fieldk$, and for
$\fieldk$ to contain the group $\boldmu_\myl$ of $\myl$th roots of unity
(so as to accommodate the Weil pairing later).  We work over the
$\myl$-torsion extension field $\fieldbigKl$ of $\fieldbigK$:
\begin{equation}
\label{equation3.4}
\fieldbigKl = \fieldbigK(\El) 
  = \fieldk\bigl(\acoeff,\bcoeff,\bigl\{\xP, \yP \mid
            \pointP = (\xP,\yP)
                 \in \El(\fieldbigKbar) - \{\Pzero\}
              \bigr\}\bigr).
\end{equation}
The field $\fieldbigKl$ contains algebraic analogs of several (in fact,
all) complex modular 
forms on $\Gammal$.  Besides $\acoeff$ and $\bcoeff$ in weights $4$
and~$6$, which generate the algebraic analog of the
graded ring of modular forms on $\Gamma(1)$, the coordinates $\xP$ and
$\yP$ of points $\pointP \in \El - \{\Pzero\}$ are obvious
analogs of holomorphic Eisenstein series of weights $2$ and~$3$.
Specifically, over $\C$, let $\pointP = \pointP_\alpha$
for $\alpha = \alpha_\tau \in \frac{1}{\myl}L_\tau - L_\tau$.  Then the
usual series for $\wp$ and $\wp'$, along with~\eqref{equation2.9},
immediately give us
\begin{equation}
\label{equation3.5}
\xP = \wp(\alpha; L_\tau) = \Etwo(\tau, \alpha) - \Etwo(\tau, 0),
\qquad
\yP = (1/2)\wp'(\alpha; L_\tau) = -\Ethree(\tau,\alpha).
\end{equation}
\begin{remark}
\label{remark3.4}
The weights of the algebraic analogs of modular forms we list in
$\fieldbigKl$ can be defined 
intrinsically by considering, for each $\scalarlambda \in \fieldk^\times$, 
the automorphism of $\fieldbigKl$ and corresponding isomorphism of
elliptic curves given by:
\begin{equation}
\label{equation3.12}
\begin{split}
& \qquad\qquad\quad
\acoeff \mapsto \scalarlambda^4 \acoeff, \qquad\qquad
\bcoeff \mapsto \scalarlambda^6 \bcoeff, \qquad\qquad
\omega \mapsto \scalarlambda^{-1} \omega,\\
&(x,y) \in \E: y^2 = x^3 + \acoeff x + \bcoeff 
\mapsto (\scalarlambda^2 x, \scalarlambda^3 y)
   \in \E': y^2 = x^3 + \scalarlambda^4\acoeff x + \scalarlambda^6\bcoeff.\\
\end{split}
\end{equation}
This automorphism naturally sends $\xP \mapsto \scalarlambda^2 \xP$
and $\yP \mapsto \scalarlambda^3 \yP$.
\end{remark}

We now introduce, for certain divisors $\divD$, another collection
$\{\lambdaD\}$ of algebraic analogs of modular forms of weight~$1$,
which we later show to be Eisenstein series. 
\begin{definition}
\label{definition3.0.1}
Consider a divisor on $\E$ of the form
$\divD = (\pointP) + (\pointQ) + (\pointR)- 3(\Pzero)$,
where $\pointP, \pointQ, \pointR \in \El - \{\Pzero\}$ satisfy
$\pointP \oplus \pointQ \oplus \pointR = \Pzero$; thus the divisor $\divD$
is principal, and the points $\pointP, \pointQ, \pointR$ are
collinear in the affine Weierstrass model of $\E$.  Write
the equation of the line joining these three points as
$y = \lambdaD x + \nuD$.  We have thus defined
$\lambdaD$ to be the slope of the line through these three points.
We also define $\thelambda_{(\pointP) + (\pointQ) + (\pointR)} =
\lambdaD$; i.e., the $\thelambda$ notation ignores the $\Pzero$ terms
in $\divD$.

We similarly define
$\thenu_{(\pointP) + (\pointQ) + (\pointR)} = \nuD$ to be the
$y$-intercept of the line.
\end{definition}

\begin{proposition}
\label{proposition3.0.2}
The slope $\lambdaD$ in Definition~\ref{definition3.0.1} is the
algebraic analog of a weight~$1$ modular form on~$\Gammal$.
Similarly, $\nuD$ is a weight~$3$ modular form.
\end{proposition}
\begin{proof}
This follows easily from~\eqref{equation1.1}, which, when combined
with~\eqref{equation2.13}, shows that $\lambdaD$ is in fact an 
Eisenstein series; the statement about $\nuD$ follows because $\nuD =
\yP - \lambdaD \xP$.  We prefer however to give a different
self-contained proof that $\lambdaD$ is modular.  We then complete the
proof that $\lambdaD$ is an Eisenstein series in
Corollary~\ref{corollary3.6}.

Our direct proof for $\lambdaD$ proceeds from the equations
\begin{equation}
\label{equation3.5.2}
\lambdaD =
\begin{cases}
(\yP-\yQ)/(\xP-\xQ), &\text{if } \pointP \neq \pointQ,\\
(3\xP^2 + \acoeff)/2\yP, &\text{if } \pointP = \pointQ,\\
\end{cases}
\end{equation}
from which it follows that, in case $\fieldk = \C$, 
the value $\lambdaD$ (viewed as a function of $\tau$) is a ratio of
modular forms of suitable weights, and hence transforms under $\Gammal$
like a modular form of weight~$1$. (We pause to note that the denominators
above are not zero: for example, if $\xP = \xQ$ and $\pointP \neq \pointQ$,
then we must have $\pointP = \ominus \pointQ$, contradicting $\pointR \neq
\Pzero$.) 
However, the quotient expression for $\lambdaD$ might
have poles on $\Half$ or at the cusps.  The formulas for the
addition law on $\E$, plus~\eqref{equation3.5}, yield
\begin{equation}
\label{equation3.5.4}
\lambdaD^2 = \xP + \xQ + \xR = \Etwo(\tau,\divD),
\end{equation}
a holomorphic form of weight~$2$.  This shows that $\lambdaD$ is
holomorphic.
\end{proof}

We now define a graded subring $\ringRl$ of $\fieldbigKl$ that will
feature prominently in our discussion.  Over $\C$, the ring $\ringRl$ 
will be a subalgebra of the ring of modular forms over
$\Gammal$, graded by weight.
\begin{definition}
\label{definition3.0.4}
If $\myl = 1$, define
\begin{equation}
\label{equation3.3.5}
\ringR_1 = \fieldk[\acoeff,\bcoeff]
\end{equation}
and if $\myl \geq 2$, define
$\ringRl$ to be the graded $\fieldk$-algebra generated by:
\begin{itemize}
\item The forms $\acoeff$ and $\bcoeff$, in weights $4$ and $6$,
\item All coordinates $\xP$, $\yP$, in weights $2$ and $3$, for $\pointP
  \in \El - \{\Pzero\}$,
\item All slopes $\lambdaD$, in weight~$1$, for divisors $\divD$ as in
  Definition~\ref{definition3.0.1}. 
\end{itemize}
Note that $\nuD = \yP - \lambdaD \xP \in \ringRl$, and that
$\ringR_{\myl'} \subset \ringRl$ for $\myl' | \myl$
(including $\myl' = 1$).
\end{definition}

Our first main result in this section is that for $\myl \geq 3$, the ring
$\ringRl$ is in fact generated by its elements of weight~$1$ (i.e., as we
shall see, by Eisenstein series of weight~$1$ on $\Gammal$).

\begin{theorem}
\label{theorem3.12}  
Assume that $\myl \geq 3$.
Then $\ringRl$ is generated by the $\lambdaD$, for $\divD$ as in Definition~\ref{definition3.0.1}.
\end{theorem}

\begin{remark}
\label{remark3.12.5}
When $\myl = 1$, $\ringR_1$ is of course generated by
$\acoeff,\bcoeff$.  When $\myl = 2$, write as usual $\E[2] = 
\{\Pzero, \pointP_1, \pointP_2, \pointP_3\}$ with $\pointP_i = (e_i,0)$ for
$1 \leq i \leq 3$.  Hence $x_{\pointP_i} = e_i$ and $y_{\pointP_i} = 0$ for
$1 \leq i \leq 3$, and all the $\lambdaD=0$ in this case; moreover,
$(x-e_1)(x-e_2)(x-e_3) = x^3 + \acoeff x + \bcoeff$.  We easily
obtain that $e_1$ and $e_2$ are algebraically independent, and that
$\ringR_2 = \fieldk[e_1, e_2]$ (note that $e_3 = -e_1 - e_2$).
Over $\C$, the ring $\ringR_2$ is the full ring of modular forms
on $\Gamma(2)$, and the generators $e_1,e_2$ are weight~$2$ Eisenstein
series.
\end{remark}

\begin{proof}[Proof of Theorem~\ref{theorem3.12}]
Let $\ringR' \subset \ringRl$ be the graded subalgebra generated
by all the $\lambdaD$.  Our goal is to show that the forms $\acoeff,
\bcoeff, \{\xP\}, \{\yP\}$, for $\pointP \in \El - \{\Pzero\}$,
all belong to $\ringR'$.
We begin by showing that all the $\{\xP\}$ belong to $\ringR'$.  This
boils down to a judicious use of~\eqref{equation3.5.4}, and
involves three cases, depending on $\myl$:
\begin{enumerate}
\item
If $\myl \geq 5$, let $\pointP$ be a point of exact order $\myl$, and
consider the following four elements of $\ringR'$ (note that
$x_{\ominus P} = \xP$):
\begin{equation}
\label{equation3.19}
\begin{split}
(\thelambda_{(\pointP) + (\pointP) + ([-2]\pointP)})^2 
       = \xP + \xP + x_{[-2]\pointP} &= 2\xP + x_{[2]\pointP}\\
(\thelambda_{(\pointP) + ([2]\pointP) + ([-3]\pointP)})^2
       = \xP + x_{[2]\pointP} + x_{[-3]\pointP} 
              &= \xP + x_{[2]\pointP} + x_{[3]\pointP}\\
(\thelambda_{(\pointP) + ([3]\pointP) + ([-4]\pointP)})^2
      \qquad\qquad\qquad\qquad
      &= \xP \qquad\quad + x_{[3]\pointP} + x_{[4]\pointP}\\
(\thelambda_{([2]\pointP) + ([2]\pointP) + ([-4]\pointP)})^2
      \qquad\qquad\qquad\qquad
      &= \qquad \quad 2x_{[2]\pointP} \qquad + x_{[4]\pointP}.\\
\end{split}
\end{equation}
Here the determinant
$\det \begin{pmatrix} 2&1&0&0\\ 1&1&1&0\\ 1&0&1&1\\0&2&0&1\\ \end{pmatrix}
= 6$ is invertible in $\fieldk$, and so each of $\xP, x_{[2]\pointP},
x_{[3]\pointP}, x_{[4]\pointP}$ can be expressed in terms of $\lambdaD$s,
hence belongs to $\ringR'$.  Now let $\pointP \in \El$ be a point of
order less than $\ell$.  We can find a basis $\{\pointQ, \pointR\}$
for $\El \isomorphic (\Z/\ell\Z)^2$, such that $\pointP = [d]\pointQ$
for some $d > 1$.
In that case, the points $\pointP' = (\ominus \pointP) \oplus \pointR =
[-d] \pointQ \oplus \pointR$ and $\pointP'' = \ominus \pointR$ both have
exact order $\myl$, so $x_{\pointP'}$ and $x_{\pointP''}$ both belong to
$\ringR'$.  The points $\pointP, \pointP', \pointP''$ are collinear, and so
$(\thelambda_{(\pointP)+(\pointP')+(\pointP'')})^2
     =\xP + x_{\pointP'} + x_{\pointP''}$ belongs to $\ringR'$, whence $\xP
\in \ringR'$.  (Alternatively, we can deal with the point $\pointP =
[d]\pointQ$ by using identities
analogous to~\eqref{equation3.19} to see that
$\xQ + x_{[n]\pointQ} + x_{[n+1]\pointQ} \in \ringR'$, and to deduce
inductively that the $x$-coordinates of all multiples $[n]\pointQ$ belong
to $\ringR'$ whenever $\pointQ$ has exact order $\myl$.)
\item
If $\myl = 3$, we simply note that
$(\thelambda_{3(\pointP)})^2  = 3\xP$ for all $\pointP \in
\E[3] - \{\Pzero\}$.
\item
If $\myl = 4$, let $\{\pointQ, \pointR\}$ be a basis for $\E[4] \isomorphic
(\Z/4\Z)^2$.  By the same technique as in the first case above, we see that
the following sums belong to $\ringR'$, being squares of suitable
$\thelambda$'s: 
\begin{equation}
\label{equation3.20}
\begin{matrix}
2\xQ & + x_{[2]\pointQ}, &&&&\\
&& 2\xR & + x_{[2]\pointR}, &&\\
\xQ && + \xR && + x_{\pointQ \oplus \pointR}, &\\
\xQ && + \xR &&& + x_{\pointQ \ominus \pointR}, \\
& x_{[2]\pointQ} &&& +  x_{\pointQ \oplus \pointR} &
                                 +  x_{\pointQ \ominus \pointR}, \\
&&& x_{[2]\pointR} & +  x_{\pointQ \oplus \pointR} &
                                 +  x_{\pointQ \ominus \pointR}.\\
\end{matrix}
\end{equation}
(For example, the fourth sum above is
$(\thelambda_{(\pointQ)+(\ominus\pointR)+(\pointR\ominus\pointQ)})^2$.)
The corresponding determinant is $-12$, again invertible, so we deduce in
particular that $\xQ, x_{[2]Q} \in \ringR'$.  Now any $\pointP \in \E[4] -
\{\Pzero\}$ has exact order either $4$ or~$2$.  So we can choose our basis
$\{\pointQ, \pointR\}$ so as to have $\pointP = \pointQ$ in the former
case, and $\pointP = [2]\pointQ$ in the latter case, thereby concluding
that $\xP \in \ringR'$.
\end{enumerate}
Now that we have shown that all the $\xP$ belong to $\ringR'$, let us show 
that all the $\yP$ also belong to $\ringR'$.  
Fix $\pointP \in \El - \{\Pzero\}$, and 
take any $\pointQ \in \El - \{\Pzero, \pointP, \ominus\pointP \}$.  Then
$(\yP - \yQ)/(\xP - \xQ)$ and $(\yP + \yQ)/(\xP - \xQ)$ are among our
$\thelambda$'s (the latter being the slope of the line through $\pointP$
and $\ominus \pointQ$), and so their sum $2\yP/(\xP - \xQ)$ belongs to
$\ringR'$.  Multiplying by $\xP - \xQ \in \ringR'$ shows that $\yP \in
\ringR'$.

Finally, take any $\pointP \in \El - \E[2]$.  Then $\acoeff = 
2\yP \thelambda_{(\pointP) + (\pointP) + ([-2]\pointP)} - 3\xP^2$
also belongs to
$\ringR'$, as does $\bcoeff = \yP^2 - \xP^3 - \acoeff \xP$.  
\end{proof}

\begin{remark}
\label{remark3.13}
We can also define a subring $\ringR'_A$ of $\ringR'$, corresponding to a
subgroup $A \subset \El$: let $\ringR'_A$ be generated by the forms
$\thelambda_{(\pointP) + (\pointQ) + (\pointR)}$, for $\pointP, \pointQ,
\pointR \in A - \{\Pzero\}$ with $\pointP \oplus \pointQ \oplus \pointR =
\Pzero$. 
Assume that $A \isomorphic \Z/m\Z \oplus \Z/\myl \Z$ with $m | \myl$ and
$\myl \geq 5$ (possibly $m = 1$).
Then our methods of proof show that $\acoeff, \bcoeff, \{\xP, \yP \mid
\pointP \in A - \{\Pzero\}\}$ all belong to $\ringR'_A$, as do the
appropriate $\thenu$'s coming from points in $A$.  Compare this to
Proposition~4.9 in~\cite{BorisovGunnells}.
\end{remark}

Our second main result in this section, Theorem~\ref{theorem3.9}
below, is the algebraic analog of Remark~\ref{remark2.12}.  We show
that the algebraic Laurent expansions of suitable elements $\fD$ of
the function field of $\E$ all have coefficients in $\ringRl$.
We take our Laurent expansions with respect to an algebraic
uniformizer $t$ at $\Pzero$:
\begin{equation}
\label{equation3.6}
t = -x/y \quad (= z - 2\acoeff z^5/5 + O(z^7) \text{ when } \fieldk = \C).
\end{equation}
Write $\Ohat$ for the completion of the local ring
of $\E$ over $\fieldbigKl$ at $\Pzero$; hence $\Ohat \isomorphic
\fieldbigKl[[t]]$ canonically, providing us with our expansions in terms
of $t$.  When $\fieldk$ has characteristic zero, we can still obtain the
analytic expansions in terms of $z$ from Section~\ref{section2}.  Indeed,
the analytic uniformizer $z$ still makes sense as an element of $\Ohat$,
since the relation $\omega = dz$ means that $z = \int \omega = t + 2a t^5/5
+ \cdots$, from~\eqref{equation3.7} below.

The meromorphic functions $x,y \in \fieldbigKl(\El)$ then have the
following algebraic Laurent expansions:
\begin{equation}
\label{equation3.7}
\begin{split}
x &= t^{-2} - \acoeff t^2 + \cdots
   = t^{-2}\bigl(1 - \acoeff t^4 + \cdots \bigr)
               \in t^{-2} \ringR_1[[t]],\\
-tx = y &= -t^{-3} + \acoeff t + \cdots
   = t^{-3}\bigl(-1 + \acoeff t^4 + \cdots \bigr),
                \in t^{-3} \ringR_1[[t]],\\
\omega &= (1 + 2\acoeff t^4 + \cdots) dt
                \in \ringR_1[[t]]dt.\\
\end{split}
\end{equation}
Moreover, the coefficient of $t^\wtj$ in the power series inside
each pair of parentheses above is always a weight~$\wtj$ homogeneous
element of the graded ring $\ringR_1$.
For all this, see for example Section~IV.1 
in~\cite{SilvermanI}; alternatively, one can start from the 
usual analytic expansion of $\wp$ in case $\fieldk = \C$ to obtain
expansions of $x$, $y$, and $t$ in terms of $z$.  Since $t = z +
O(z^5)$, we
obtain series for $z$, $x$, and $y$ in terms of $t$.

The form of the expansions in~\eqref{equation3.7} and the results of
Section~\ref{section2} suggest the following definition.
\begin{definition}
\label{definition3.7}
An $\ringRl$-balanced Laurent series in $t$ is a
series of the form
\begin{equation}
\label{equation3.17}
t^m \left( 1 + \sum_{\wtj = 1}^{\infty} c_\wtj t^\wtj \right), 
\qquad\qquad c_\wtj \in \ringRl \text{ of weight } \wtj.
\end{equation}
In characteristic zero, an
analogous definition holds for series expressed in terms of the analytic
uniformizer $z$.  By the following lemma, the condition
of being $\ringRl$-balanced does not depend on whether one expands
with respect to $t$ or $z$.
\end{definition}

\begin{lemma}
\label{lemma3.8}
\begin{enumerate}
\item 
If $f(t)$ and $g(t)$ are $\ringRl$-balanced Laurent series, then so are
$f(t)g(t)$ and $f(t)/g(t)$.
\item
If $f(t) = t^m(1 + c_1 t + \cdots)$ is $\ringRl$-balanced, with $n | m$ and
$n$ invertible in $\fieldk$,
then the ``principal branch'' of the $n$th root $f(t)^{1/n} =
t^{m/n}(1 + c_1 t/n + \cdots)$ is again $\ringRl$-balanced.
\item
Assume that $\fieldk$ has characteristic $0$.  Then
$z = z(t) = t + 2\acoeff t^5/5 + \cdots$ 
and $t=t(z) = z - 2\acoeff z^5/5 + \cdots$ 
are both $\ringR_1$-balanced series.  It follows that a series $f(t)$ is
$\ringRl$-balanced if and only if $f(t(z))$ is.
\item
If $f(t) = t^m(1 + c_1 t + \cdots)$ is $\ringRl$-balanced, then the
logarithmic differential $df/f$ has the expansion
$df/f = t^{-1}(m + \sum_{\wtj \geq 1} d_\wtj t^\wtj)dt$ with $d_\wtj$ a
weight~$\wtj$ element of $\ringRl$.
\end{enumerate}
\end{lemma}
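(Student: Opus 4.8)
The plan is to prove the four assertions of Lemma~\ref{lemma3.8} in order, since each builds on the previous ones, and to reduce everything to the observation that the product and (formal) inverse of series of the shape $t^m(1 + \sum_{\wtj \geq 1} c_\wtj t^\wtj)$ with $c_\wtj$ homogeneous of weight~$\wtj$ again have that shape, because the ring $\ringR$ is graded.  The underlying ``grading bookkeeping'' is that if we assign $t$ the weight $-1$, then an $\ringR$-balanced series is exactly a Laurent series all of whose coefficients lie in $\ringR$ and which is homogeneous of weight $-m$ (with leading coefficient normalized to $1$); multiplication and division of such homogeneous series stays homogeneous, and that is all of part~(1).

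For part~(1) concretely: writing $f(t) = t^m(1 + \sum c_\wtj t^\wtj)$ and $g(t) = t^n(1 + \sum c'_\wtj t^\wtj)$, the product is $t^{m+n}(1 + \sum_\wtj (\text{sum of } c_{\wtj_1}c'_{\wtj_2} \text{ with } \wtj_1+\wtj_2=\wtj) t^\wtj)$, and each such summand $c_{\wtj_1}c'_{\wtj_2}$ is a product of forms of weights $\wtj_1$ and $\wtj_2$, hence a form of weight $\wtj$; the empty-product term gives the leading $1$.  For the quotient it suffices (dividing out the leading powers of $t$) to invert a series $1 + \sum_{\wtj \geq 1} c_\wtj t^\wtj$; its inverse $1 + \sum_{\wtj \geq 1} e_\wtj t^\wtj$ is determined recursively by $e_1 = -c_1$ and $e_\wtj = -c_\wtj - \sum_{i=1}^{\wtj-1} c_i e_{\wtj-i}$, so an easy induction on $\wtj$ shows $e_\wtj \in \ringR$ of weight~$\wtj$.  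Part~(2) is the same kind of recursion: writing $h(t) = t^{m/\myl}(1 + \sum_{\wtj \geq 1} e_\wtj t^\wtj)$ for the candidate $\myl$th root of $f(t) = t^m(1+\sum c_\wtj t^\wtj)$, the equation $h(t)^\myl = f(t)$ gives, by comparing coefficients, $\myl e_\wtj = c_\wtj - (\text{polynomial with } \Z[1/\myl]\text{-coefficients in } c_1,\dots,c_{\wtj-1}, e_1,\dots,e_{\wtj-1} \text{, each term of total weight } \wtj)$; since $\myl$ is invertible in $\fieldk$, this determines $e_\wtj \in \ringR$ of weight~$\wtj$ by induction.

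For part~(3), the expansions of $x$, $y$, $\omega$ in~\eqref{equation3.7} already show (together with $t = -x/y$) that $t$ is an $\ringR_1$-balanced series in the analytic uniformizer $z$ once we know $\omega = dz$ and the coefficients of the $\wp$-expansion are weighted correctly; concretely, $\omega = dz$ gives $z = \int (1 + 2\acoeff t^4 + \cdots)\,dt = t + 2\acoeff t^5/5 + \cdots$, which is $\ringR_1$-balanced by~\eqref{equation3.7}, and then reverting this power series (a division-type recursion as in part~(1), legitimate because the characteristic is $0$ so the needed denominators are invertible) gives $t = t(z) = z - 2\acoeff z^5/5 + \cdots$, again $\ringR_1$-balanced.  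Substituting one into the other and invoking part~(1) then shows that $f(t)$ is $\ringR$-balanced iff $f(t(z))$ is, whenever $\ringR_1 \subset \ringR$.  Part~(4) is a direct computation: if $f(t) = t^m(1 + \sum_{\wtj \geq 1} c_\wtj t^\wtj)$ then $df/f = d(\log f)$, and $\log f = m\log t + \log(1 + \sum c_\wtj t^\wtj)$, so $df/f = (m/t)\,dt + \frac{d}{dt}\!\bigl(\sum_{k\geq 1} \tfrac{(-1)^{k-1}}{k}(\sum_{\wtj} c_\wtj t^\wtj)^k\bigr)dt$; the inner power series is $\ringR$-balanced up to its leading $0$ by part~(1), i.e.\ of the form $\sum_{\wtj \geq 1} g_\wtj t^\wtj$ with $g_\wtj \in \ringR$ of weight~$\wtj$ (the division by the integer $k$ being harmless in characteristic~$0$; in positive characteristic one instead differentiates $\log(1 + \sum c_\wtj t^\wtj)$ formally as $\frac{(\sum c_\wtj t^\wtj)'}{1 + \sum c_\wtj t^\wtj}$ and applies parts~(1)), whence $df/f = t^{-1}(m + \sum_{\wtj \geq 1} \wtj g_\wtj t^\wtj)\,dt$ and $d_\wtj = \wtj g_\wtj$ has weight~$\wtj$.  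None of the steps is a real obstacle; the only mild care needed is the weight bookkeeping for $dt$ (which carries weight $-1$, consistent with $t$), and making sure the recursions in parts~(2) and~(4) only divide by integers that are invertible in $\fieldk$ — which holds because $6\myl$ is invertible.
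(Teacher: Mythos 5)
Your proposal is correct and follows essentially the same route as the paper's (much terser) proof: elementary weight-homogeneous recursions for parts (1) and (2), integrating the $\ringR_1$-balanced expansion of $\omega$ and reverting to get part (3), and direct computation of the logarithmic differential for part (4). The extra bookkeeping you supply (assigning $t$ weight $-1$, the explicit recursions, and the characteristic-free version of part (4) via $du/u$) is all sound filling-in of details the paper declares ``elementary'' or ``immediate.''
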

\begin{proof}
The first two assertions are elementary.  The third follows because the
invariant differential $\omega = dx/(2y) = dz$ has, by the first assertion,
an $\ringR_1$-balanced expansion $\omega = (1 + 2\acoeff t^4 + \cdots) dt$;
now integrate to obtain that $z=z(t)$ is balanced.  The rest is immediate.
\end{proof}

We can now give the algebraic analog of Definition~\ref{definition2.7} and
Remark~\ref{remark2.11}.
\begin{definition}
\label{definition3.8.2}
Let $\divD$ be a divisor supported on $\El$, with $m_0$ the
multiplicity of $\Pzero$ in $\divD$.  If $\divD$ is principal, we
define $\fD \in \fieldbigK(\E)$ by requiring, analogously
to~\eqref{equation2.16}, that $\Divisor(\fD) = \divD$ and 
$\fD = t^{m_0}(1 + O(t)) \in t^{m_0} (1 + t\Ohat)$.  This is
compatible with our previous normalization when $\fieldk = \C$, since
$t = z + O(z^5)$ by~\eqref{equation3.6}.

When $\divD$ is not principal, define as before $\divD_z = \divD - (\deg
\divD)(\Pzero)$, and consider the principal divisor $\myl \divD_z$.  Then
define
\begin{equation}
\label{equation3.10}
\fD = (\thef_{\myl\divD_z})^{1/\myl}
    = t^{m_0 - \deg \divD}(1 + O(t))
                          \in t^{m_0 - \deg\divD}(1 + t\Ohat),
\end{equation}
using the formal $\myl$th root of the power series.  For $\divD$ principal,
this is the same as the definition a few lines above,
because~\eqref{equation2.16.5} still holds.  Moreover, $\thef_\Pzero =
1$, and $\fD$ is unchanged if we add a multiple of $\Pzero$ to $\divD$.
\end{definition}

It will be convenient to have names for the first few coefficients of the
$t$-expansion of $\fD$.  Let us therefore define $\lambdaD, \muD, \nuD$ in
general by
\begin{equation}
\label{equation3.8}
\fD = t^{m_0 - \deg \divD}
      ( 1 + \lambdaD t + \muD t^2 + \nuD t^3 + \cdots).
\end{equation}
(The expansion in~\eqref{equation3.14} below shows
that this new definition of the symbols $\lambdaD$ and $\nuD$ agrees
with that of
Definition~\ref{definition3.0.1} for the divisors
considered there.)
We also note that~\eqref{equation2.16.5} implies
various relations among the $\lambdaD, \muD, \nuD$, most notably
\begin{equation}
\label{equation3.10.5}
   \thelambda_{\divD+\divE} = \lambdaD + \lambdaE.
\end{equation}
In particular, for $\divD = (\pointP) + (\pointQ) + (\pointR) -
3(\Pzero)$ as in Definition~\ref{definition3.0.1},
\begin{equation}
\label{equation3.10.7}
\lambdaD = \thelambda_{(\pointP) + (\pointQ) + (\pointR)}
         = \lambdaP + \lambdaQ + \lambdaR.
\end{equation}

\begin{theorem}
\label{theorem3.9}
\begin{enumerate}
\item
Let $\divD$ and $\fD$ be as in Definition~\ref{definition3.8.2}.  Then the
algebraic Laurent expansion of $\fD$ in terms of $t$ is 
an $\ringRl$-balanced Laurent series.
\item
The same result holds if we expand $\fD$ with respect to the analytic
uniformizer $z$ in characteristic zero, as well if we expand the
logarithmic derivative $d\fD/\fD$.  Thus if $\fieldk = \C$, this theorem
combined with Theorem~\ref{theorem2.8} and
Proposition~\ref{proposition2.10} imply that all
Eisenstein series on $\Gammal$ belong to $\ringRl$.
\end{enumerate}
\end{theorem}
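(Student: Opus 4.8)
The plan is to reduce the whole theorem to showing that, for every $\myl$-torsion point $\pointP$, the ``basic'' function $\fP := \thef_{(\pointP)-(\Pzero)}$ (defined via the $\myl$th root construction of Definition~\ref{definition3.1}; here $\fP = t^{-1}(1+\cdots)$, and $\fP = 1$ when $\pointP = \Pzero$) is an $\ringRl$-balanced Laurent series in $t$. Granting this, part~(1) follows immediately: for an arbitrary $\divD = \sum_\pointP m_\pointP(\pointP)$ supported on $\El$, Definition~\ref{definition3.1} gives $\fD = \thef_{\divD - (\deg\divD)(\Pzero)}$, and since $\divD - (\deg\divD)(\Pzero) = \sum_\pointP m_\pointP\bigl[(\pointP) - (\Pzero)\bigr]$, the multiplicativity~\eqref{equation2.16.5} of $\divD \mapsto \thef_\divD$ yields $\fD = \prod_\pointP \fP^{m_\pointP}$; a product or quotient of $\ringRl$-balanced series is again $\ringRl$-balanced by Lemma~\ref{lemma3.8}(1), so $\fD$ is $\ringRl$-balanced, and its coefficients $\lambdaDj$ are by definition weight-$\wtj$ elements of $\ringRl$, hence modular forms of weight~$\wtj$ on $\Gammal$ (as $\ringRl$ sits inside the ring of modular forms by construction).

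To prove that $\fP$ is balanced I would pass to a quotient group in order to isolate the relevant ``coboundary'' structure. Let $G$ be the multiplicative group of invertible formal Laurent series $t^m(1 + O(t))$ with coefficients in $\fieldbigKl$, let $H \subset G$ be the subgroup of $\ringRl$-balanced series (a subgroup by Lemma~\ref{lemma3.8}(1)), and write $\phi(\pointP)$ for the class of $\fP$ in $G/H$. Proposition~\ref{proposition3.5}(1) says that $\fP\cdot\thef_{(\ominus\pointP)-(\Pzero)} = x - \xP$, which is $\ringRl$-balanced since $\xP,\acoeff,\bcoeff\in\ringRl$; thus $\phi(\ominus\pointP) = -\phi(\pointP)$. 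Proposition~\ref{proposition3.5}(2), applied to the collinear triple $(\pointP)+(\pointQ)+(\ominus(\pointP\oplus\pointQ))$, says that $\fP\cdot\fQ\cdot\thef_{(\ominus(\pointP\oplus\pointQ))-(\Pzero)} = -y + \lambdaD x + \nuD$, which is $\ringRl$-balanced since $\lambdaD\in\ringRl$ by~\eqref{equation3.16} and $\nuD = \yP - \lambdaD\xP\in\ringRl$ by~\eqref{equation3.14.6} (the degenerate configurations --- $\pointP$ or $\pointQ$ equal to $\Pzero$, $\pointQ = \ominus\pointP$, or coincidences forcing a tangent or inflectional line --- are all covered by Proposition~\ref{proposition3.5}, sometimes reducing to part~(1)). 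Hence $\phi(\pointP)+\phi(\pointQ)+\phi(\ominus(\pointP\oplus\pointQ)) = 0$, and combining with the previous identity gives $\phi(\pointP\oplus\pointQ) = \phi(\pointP)+\phi(\pointQ)$. So $\phi\colon\El\to G/H$ is a group homomorphism; since $\El$ is killed by $\myl$, so is its image, i.e.\ $\fP^\myl$ is $\ringRl$-balanced. Finally $\fP^\myl = t^{-\myl}(1+\cdots)$ has $t$-order divisible by $\myl$, so Lemma~\ref{lemma3.8}(2) applies and its principal $\myl$th root is $\ringRl$-balanced; that root is $\fP$ itself by uniqueness (if $g^\myl = h^\myl$ with $g,h\in t^{-1}(1+O(t))$ then $(g/h)^\myl = 1$ forces $g = h$, since $\myl$ is invertible in $\fieldk$). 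This completes part~(1).

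For part~(2), Lemma~\ref{lemma3.8}(3) shows that in characteristic zero the reversion series $t = t(z)$ and $z = z(t)$ are $\ringR_1$-balanced, so the property of being $\ringRl$-balanced is insensitive to whether one expands in $t$ or in $z$; and by Lemma~\ref{lemma3.8}(4), if $\fD$ is $\ringRl$-balanced then $d\fD/\fD$ has an expansion $t^{-1}\bigl(m_0 + \sum_{\wtj\geq 1} d_\wtj t^\wtj\bigr)\,dt$ (equivalently in $z$) with each $d_\wtj\in\ringRl$ of weight~$\wtj$. When $\fieldk = \C$ and $\divD$ is principal and supported on $\El$, equation~\eqref{equation2.18} of Theorem~\ref{theorem2.8} identifies these coefficients as $\Ej(\tau,\divD) = -d_\wtj\in\ringRl$ for all $\wtj\geq 1$. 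By Proposition~\ref{proposition2.10}, for $\myl\geq 2$ these $\Ej(\tau,\divD)$ already span over $\C$ the full space of holomorphic Eisenstein series of weight~$\wtj$ on $\Gammal$; since $\ringRl$ is a $\C$-subalgebra it is closed under $\C$-linear combinations, so every holomorphic Eisenstein series on $\Gammal$, in every weight $\wtj\geq 1$, lies in $\ringRl$.

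The step I expect to be the main obstacle is the middle one. Proposition~\ref{proposition3.5} does not hand one $\fP$ itself as a balanced series, only the combinations $\fP\fQ\thef_{(\ominus(\pointP\oplus\pointQ))-(\Pzero)}$ and $\fP\thef_{(\ominus\pointP)-(\Pzero)}$; the insight is that these say precisely that $\pointP\mapsto[\fP]$ is a homomorphism into a group, so the residual ambiguity is $\myl$-torsion and is killed by passing to $\fP^\myl$ and then extracting the \emph{principal} $\myl$th root --- which is legitimate exactly because $6\myl$ is invertible in $\fieldk$. One also has to check that the degenerate collinear cases in Proposition~\ref{proposition3.5}(2) really are covered and that the principal root returns $\fP$ rather than some other root, but both are routine.
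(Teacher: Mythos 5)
Your proof is correct and follows essentially the same route as the paper's: Proposition~\ref{proposition3.5} supplies the two basic balanced series, multiplicativity~\eqref{equation2.16.5} together with Lemma~\ref{lemma3.8}(1) propagates balancedness to products and quotients, Lemma~\ref{lemma3.8}(2) extracts the principal $\myl$th root, and part~(2) is handled identically via Lemma~\ref{lemma3.8}(3)--(4), Theorem~\ref{theorem2.8}, and Proposition~\ref{proposition2.10}. Your homomorphism $\El \to G/H$ is simply a tidy repackaging of the paper's unproved assertion that every divisor supported on $\El - \{\Pzero\}$ with $\bigoplus \divD = \Pzero$ is a $\Z$-linear combination of divisors of types (i) and (ii); it makes that combinatorial step explicit, which is a small but genuine improvement in exposition.
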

\begin{proof}
By part (2) of Lemma~\ref{lemma3.8}, with $n=\myl$, we 
reduce to the case where $\divD$ is principal.  Now a principal $\divD$
that is supported on $\El$ can be written as a $\Z$-linear combination of
divisors of two types: (i) $\divD = (\pointP) + (\ominus \pointP) - 2
(\Pzero)$, for $\pointP \in \El - \{\Pzero\}$, and
(ii) $\divD = (\pointP) + (\pointQ) + (\pointR) - 3(\Pzero)$, as in
Definition~\ref{definition3.0.1}.
Thus part (1) of Lemma~\ref{lemma3.8} reduces our task to showing that
$\fD$ is $\ringRl$-balanced for divisors of types (i) and (ii).  The
statements for $d\fD/\fD$ and for $z$-expansions follow similarly.

In case (i), using~\eqref{equation3.7}, we have the $\ringRl$-balanced
expansion
\begin{equation}
\label{equation3.13}
\begin{split}
\thef_{(\pointP) + (\ominus \pointP)}
&= \thef_{(\pointP) + (\ominus \pointP) - 2(\Pzero)}
= x - \xP \\
& = t^{-2}(1 - \xP t^2 - \acoeff t^4 + \cdots)
      \in t^{-2} \ringR_1[\xP][[t]] \subset t^{-2} \ringRl[[t]].
\end{split}
\end{equation}

Similarly, in case (ii) we have the following expansion:
\begin{equation}
\label{equation3.14}
\begin{split}
\thef_{(\pointP) + (\pointQ) + (\pointR)} &= \fD = -y + \lambdaD x + \nuD \\
& =  t^{-3}(1 + \lambdaD t + \nuD t^3 - \acoeff t^4 + \cdots)
  \in t^{-3} \ringR_1[\lambdaD, \nuD][[t]],
\end{split}
\end{equation}
and again $\ringR_1[\lambdaD, \nuD][[t]] \subset \ringRl[[t]]$.
\end{proof}

\begin{remark}
\label{remark3.9.5}
The fact that all Eisenstein series of weights $\geq 2$ belong to
$\ringRl$ can alternatively be proved as in Sections 10.2-10.5
of~\cite{ShimuraElementary}, by 
expressing the higher derivatives of $\wp$ in terms of $\wp$, $\wp'$, and
$\acoeff(\tau)$; this expresses Eisenstein series of weights $4$ and
above in terms of the forms $\xP$, $\yP$, and $\acoeff$.
\end{remark}

\begin{corollary}
\label{corollary3.6}
Let $\fieldk = \C$, and take a divisor $\divD$ supported on $\El$ as
usual.  (The basic case is $\divD = \pointP$.)  Then
\begin{equation}
\label{equation3.15.5}
\lambdaD(\tau) = - \Eone(\tau, \divD).
\end{equation}
In particular, we can also take $\divD = (\pointP) + (\pointQ) +
(\pointR) - 3(\Pzero)$ as in Definition~\ref{definition3.0.1}, and
choose a principal lift 
$\tilde{\divD} = (\alpha) + (\beta) + (\gamma) - 3(0)$, i.e., $\alpha
+ \beta + \gamma = 0$.  We obtain our alternative proof
of~\eqref{equation1.1}, in light of \eqref{equation3.5}
and~\eqref{equation3.5.2}:
\begin{equation}
\label{equation3.15}
\lambdaD(\tau)
  = -\Eone(\tau,\alpha) - \Eone(\tau,\beta) - \Eone(\tau,\gamma) 
  = -\zeta(\alpha) - \zeta(\beta) - \zeta(\gamma).
\end{equation}
\end{corollary}
\begin{proof}
From~\eqref{equation3.8}, we have that
$d\fD/\fD = t^{-1}(m_0 - \deg \divD + \lambdaD t + \cdots)$, which
equals $z^{-1}(m_0 - \deg \divD + \lambdaD z + \cdots)$ because $t$
and $z$ agree up to $O(z^4)$.  We obtain~\eqref{equation3.15.5} 
from~\eqref{equation2.18} (by Remark~\ref{remark2.11}, we may use
nonprincipal $\divD$; note that $\Eone(\tau,\Pzero) = 0$, so
$\Eone(\tau,\divD) = \Eone(\tau,\divD_z)$).  For~\eqref{equation3.15},
use also~\eqref{equation2.13}.
\end{proof}

\begin{remark}
\label{remark3.14}
Theorems \ref{theorem3.12} and~\ref{theorem3.9}
show that when $\myl \geq 3$, all the modular forms
that we have constructed through Laurent expansions can be expressed as
polynomials in the $\{\lambdaD\}$ for $\divD$ as in
Definition~\ref{definition3.0.1}, which are special Eisenstein series
of weight~$1$ when $\fieldk = \C$.  It is equally useful to consider
the $\{\lambdaP \mid \pointP \in \El\}$ as a set of generators of
$\ringRl$.  By~\eqref{equation3.10.7},
the $\{\lambdaD\}$ are linear combinations of the $\{\lambdaP\}$.
Our theorems prove in a rather roundabout way that the $\{\lambdaP\}$
are expressible in terms of the $\{\lambdaD\}$.  On can also see this
directly, by observing that $\myl$ is invertible in $\fieldk$ and that 
$\myl \lambdaP
   = \sum_{n = 1}^{\myl-2}
       \thelambda_{(\pointP) + ([n]\pointP) + ([-n-1]\pointP)}$.
Alternatively, one can express $\lambdaP$ as a linear combination of
$O(\log \myl)$ different $\lambdaD$s, using values of $n$ starting
from $1$ and increasing by a ``double-and-add'' approach until we
reach $n = \myl - 1$.
\end{remark}

We conclude this section by noting various elementary algebraic
relations between the modular forms in $\ringRl$.  We have already
noted that $\thef_\Pzero = 1$; hence
\begin{equation}
\label{equation3.30}
\thelambda_\Pzero = \themu_\Pzero = \thenu_\Pzero = 0.
\end{equation}
Even though $\Pzero$ does not have affine coordinates, it is
convenient to define also
\begin{equation}
\label{equation3.31}
x_\Pzero = y_\Pzero = 0.
\end{equation}
Then, for all $\pointP \in \El$, we have
\begin{equation}
\label{equation3.12.3}
  \thelambda_{\ominus \pointP} = -\lambdaP,
\quad
  \themu_{\ominus\pointP} = \muP,
\quad
  x_{\ominus\pointP} = \xP,
\quad
  \thenu_{\ominus\pointP} = -\nuP,
\quad
  y_{\ominus\pointP} = -\yP.
\end{equation}
This comes from considering the automorphism of~\eqref{equation3.12}
for $u=-1$, which sends $\pointP$ to $\ominus \pointP$, and acts like
$(-1)^\wtj$ on a modular form of weight~$\wtj$.
With our conventions, we also have the identities
\begin{equation}
\label{equation3.12.6}
\sum_{\pointP\in\El} \lambdaP
 =  \sum_{\pointP\in\El} \muP
 =  \sum_{\pointP\in\El} \xP
 =  \sum_{\pointP\in\El} \nuP
 =  \sum_{\pointP\in\El} \yP
 = 0.
\end{equation}
For odd weights ($\lambdaP, \nuP, \yP$), this is clear.  Morally
speaking, \eqref{equation3.12.6}~holds because each sum above is a
modular form of full level $\Gamma(1)$ of weight $1$, $2$, or~$3$, and
is hence zero.  This can be turned into an algebraic proof, by
considering 
the Galois group
$\Gal(\fieldbigKl/\fieldbigK) \isomorphic SL(2,\Z/\myl\Z)$ and its
natural action on $\El$.  The sums thus belong to $\fieldbigK$, and
one can show that they are integral over the unique factorization
domain $\ringR_1$, hence belong to $\ringR_1$, hence are zero because
of their low weight.

We can also give the following direct proof that $\sum \xP = 0$: the
sum is essentially the coefficient of  $x^{\myl^2 - 2}$ in the
polynomial $f(x) = \prod_{\pointP \in \El-\{\Pzero\}} (x - \xP)$.
But $f(x)$ is a constant multiple of the
square\footnote{
We always have $\psi^2_\myl \in \ringR_1[x]$, since $\psi_\myl \in
\ringR_1[x]$ or $y\ringR_1[x]$ for $\myl$ odd or even, respectively.
}
of the $\myl$-division polynomial:
$\psi_\myl(x,y)^2 = \myl^2 f(x)
= \myl^2 x^{\myl^2 - 1} + \cdots \in \ringR_1[x]$
(see, for example, Exercise~III.3.7 of~\cite{SilvermanI}).
Now by an analog of $\ringR_1$-balanced series for polynomials, the
coefficient of $x^{\myl^2 - 2}$ in $\psi_\myl(x,y)^2$ is known to be
a weight~$2$ element of $\ringR_1$, and is hence zero.
Finally, we defer the proof that $\sum \muP = 0$ to the proof of
Proposition~\ref{proposition4.3}.

We collect the last few identities of this section in a lemma.
Note that~\eqref{equation3.22} below has already appeared for
$\Gamma_1(\myl)$ 
in~\cite{BorisovGunnellsNonvanishing,BorisovGunnellsPopescuEquations}.
The approach of obtaining relations by taking a sum of residues over
all points of $\E$ is taken from~\cite{BorisovGunnells}.
\begin{lemma}
\label{lemma3.15}
\begin{enumerate}
\item
Let $\pointP \in \El - \{\Pzero\}$.  Then the Laurent expansion of the
logarithmic differential $d\fP/\fP$ begins with
\begin{equation}
\label{equation3.20.5}
d\fP/\fP = t^{-1}[-1 + \lambdaP t - \xP t^2 + \yP t^3 + \cdots]dt.
\end{equation}
(This is the algebraic analog of~\eqref{equation2.19.5}, taking into
account \eqref{equation3.5}, \eqref{equation3.6},
and~\eqref{equation3.15.5}.)
We deduce the following equations, which over $\C$ can also be seen
from~\eqref{equation2.21}:
\begin{equation}
\label{equation3.21}
\xP = \lambdaP^2 - 2\muP, \qquad
\yP = 3\nuP - 3\muP\lambdaP + \lambdaP^3.
\end{equation}
\item
Let $\divD = (\pointP) + (\pointQ) + (\pointR)$ be as usual a divisor
supported on $\El - \{\Pzero\}$ with $\oplus \divD = \Pzero$.  Then
\begin{equation}
\label{equation3.22}
\lambdaP\lambdaQ + \lambdaQ\lambdaR + \lambdaP\lambdaR
 + \muP + \muQ + \muR = 0.
\end{equation}
\end{enumerate}
\end{lemma}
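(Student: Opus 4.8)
\emph{Proof sketch.}

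For part~(1) the plan is to compute the leading Laurent coefficients of the logarithmic differential $d\fP/\fP$ at $\Pzero$ in two independent ways and then compare. The first way is purely formal: from Definition~\ref{definition3.1} we have $\fP = t^{-1}(1 + \lambdaP t + \muP t^2 + \nuP t^3 + \cdots)$, so writing $\fP = t^{-1}u$ and expanding $du/u$ (part~(4) of Lemma~\ref{lemma3.8}) produces, after a routine series manipulation,
\[
\frac{d\fP}{\fP} = t^{-1}\Bigl(-1 + \lambdaP t + (2\muP - \lambdaP^2)\,t^2
   + (3\nuP - 3\muP\lambdaP + \lambdaP^3)\,t^3 + \cdots\Bigr)\,dt .
\]
The second way is geometric. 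Although $\fP$ itself need not lie in $\fieldbigKl(\E)$, its $\myl$th power $\fP^\myl = \thef_{\myl(\pointP) - \myl(\Pzero)}$ does, with divisor $\myl(\pointP) - \myl(\Pzero)$; hence $d\fP/\fP = \frac{1}{\myl}\,d(\fP^\myl)/\fP^\myl$ is a genuine meromorphic differential on $\E$ of the third kind, with simple poles only at $\pointP$ and $\Pzero$, of residues $+1$ and $-1$ respectively.

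I would then match $d\fP/\fP$ against the explicit rational differential $\eta_\pointP := \frac{y + \yP}{x - \xP}\,\omega$ (recall $\omega = dx/(2y)$). For $\pointP \notin \E[2]$ this differential has exactly the same polar behaviour --- the would-be pole at $\ominus\pointP$ is cancelled by the zero of $y + \yP$ there, and $\eta_\pointP$ has simple poles of residues $+1$ at $\pointP$ and $-1$ at $\Pzero$ --- so, since the only holomorphic differentials on the genus-one curve $\E$ are the scalar multiples of $\omega$, we get $d\fP/\fP = \eta_\pointP + c\,\omega$ for some constant $c \in \fieldbigKl$. Plugging the $t$-expansions~\eqref{equation3.7} of $x$, $y$, $\omega = (1 + 2\acoeff t^4 + \cdots)\,dt$ into $\eta_\pointP$ gives $\eta_\pointP = (-t^{-1} - \xP t + \yP t^2 + \cdots)\,dt$; since $\omega$ has no $t\,dt$ or $t^2\,dt$ term, the coefficients of $t\,dt$ and $t^2\,dt$ in $d\fP/\fP$ are $-\xP$ and $\yP$ \emph{irrespective of $c$}. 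Comparing with the formal expansion above yields~\eqref{equation3.20.5} and the two identities~\eqref{equation3.21}. The excluded case $\pointP \in \E[2]$ occurs only when $2 \mid \myl$, and there $\fP = (x-\xP)^{1/2}$ is $t^{-1}$ times an even power series in $t$, so $\lambdaP = \nuP = 0$, $\muP = -\xP/2$, $\yP = 0$, and~\eqref{equation3.21} holds trivially.

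Part~(2) should be a one-liner given part~(1)'s setup. Since $\divD - 3(\Pzero) = [(\pointP) - (\Pzero)] + [(\pointQ) - (\Pzero)] + [(\pointR) - (\Pzero)]$, the multiplicativity~\eqref{equation2.16.5} --- which survives the $\myl$th-root normalization of Definition~\ref{definition3.1} --- gives $\fD = \fP\,\fQ\,\fR$. Expanding both sides at $\Pzero$: the left side equals $t^{-3}(1 + \lambdaD t + \muD t^2 + \cdots)$ with $\muD = 0$ by part~(2) of Proposition~\ref{proposition3.5}, while the $t^2$-coefficient of the product on the right is $\muP + \muQ + \muR + \lambdaP\lambdaQ + \lambdaQ\lambdaR + \lambdaP\lambdaR$; equating the two gives~\eqref{equation3.22}. (One can instead derive~\eqref{equation3.22} by the sum-of-residues technique noted just before the lemma, but this seems quicker.)

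The main obstacle is hidden in part~(1): identifying $d\fP/\fP$ as a differential of the third kind pins it down only up to the holomorphic ambiguity $c\,\omega$, which is exactly the difficulty flagged in Remark~\ref{remark2.9}. What saves the day is that $c\,\omega$ contributes nothing to the $t\,dt$ and $t^2\,dt$ coefficients, so the identities~\eqref{equation3.21} can be read off without ever determining $c$ (matching the $dt$-coefficient does, as a byproduct, give $c = \lambdaP$). Everything else --- the logarithmic-derivative expansion, the $t$-expansion of $\eta_\pointP$, and the product expansion in part~(2) --- is routine bookkeeping with power series.
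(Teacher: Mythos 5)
Your proof is correct, but it reaches the key identities by a genuinely different route from the paper's. For part~(1), the paper also observes that $d\fP/\fP$ is a globally defined meromorphic differential with simple poles only at $\Pzero$ and $\pointP$ (residues $-1$ and $+1$), but instead of pinning it down explicitly it applies the residue theorem to the auxiliary differentials $x\,d\fP/\fP$ and $y\,d\fP/\fP$: the sum of all residues of each over $\E$ vanishes, the residue at $\pointP$ is $\xP$ (respectively $\yP$), and the residue at $\Pzero$ is read off from the expansions $x = t^{-2}(1+O(t^4))$ and $y = -t^{-3}(1+O(t^4))$ of~\eqref{equation3.7}. That yields the coefficients $-\xP$ and $\yP$ in~\eqref{equation3.20.5} uniformly, with no case distinction for $\pointP \in \E[2]$ and no need to exhibit the rational differential $\frac{y+\yP}{x-\xP}\,\omega$ or to invoke the one-dimensionality of the space of holomorphic differentials on $\E$; it also sidesteps the ambiguity $c\,\omega$ entirely rather than arguing that $c$ is irrelevant to the coefficients in question. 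Your version buys an explicit closed form for $d\fP/\fP$ (and, as a byproduct, the value $c = \lambdaP$), at the cost of the extra bookkeeping for the $2$-torsion case, which you handle correctly. For part~(2), the paper combines $\xP = \lambdaP^2 - 2\muP$ with~\eqref{equation3.14.3}, whereas you compare $t^2$-coefficients in $\fD = \fP\fQ\fR$ against $\muD = 0$ from Proposition~\ref{proposition3.5}; the two computations are essentially equivalent, but yours has the small advantage of being independent of part~(1).
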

\begin{proof}
For \eqref{equation3.20.5} and~\eqref{equation3.21}, consider the
meromorphic differential form $d\fP/\fP$ on $\E$.  Recall that
$\fP = (\thef_{\myl(\pointP) - \myl(\Pzero)})^{1/\myl}$ exists in
$\Ohat$ but is not a meromorphic function on $\E$; however, its
logarithmic differential makes sense globally on $\E$.
Now $d\fP/\fP$ has simple poles at each of $\Pzero$
and $\pointP$, with residues $-1$ and $1$, respectively.  
The sum of the residues of the global meromorphic
differential $x\,d\fP/\fP$ (respectively, $y\,d\fP/\fP$) at all points
of $\E(\fieldbigKbar)$ is zero.  Taking into account the fact that $x
= t^{-2}(1+O(t^4))$ and $y = -t^{-3}(1+O(t^4))$, this
yields the coefficients $\xP$ and $\yP$ in~\eqref{equation3.20.5}.
On the other hand, we can directly compute the logarithmic
differential of $\fP = t^{-1}(1+\lambdaP t + \muP t^2 + \nuP t^3 +
\cdots)$, and this yields the coefficient $\lambdaP$
in~\eqref{equation3.20.5}, as well as~\eqref{equation3.21}.
Finally, to see~\eqref{equation3.22}, combine the 
equations $\xP = \lambdaP^2 - 2\muP$ for $\pointP$, $\pointQ$, and
$\pointR$ with~\eqref{equation3.5.4}. 
\end{proof}

\section{Relations involving the Weil pairing and Hecke operators}
\label{section4}

In this section, we prove deeper algebraic relations between the
modular forms in $\ringRl$.
The
first few relations arise from the Weil pairing on the $\myl$-torsion
group $\El$ of our elliptic curve.  Other relations are related
to the action of the full Hecke algebra of $\Gammal$ on modular forms
in $\ringRl$.  We eventually obtain enough relations to be able to
show in essence that the weight $2$ and~$3$ parts of
$\ringRl$ are stable under the action of the Hecke algebra.
(Actually, in the case of weight~$3$ we obtain only a partial result
at this stage of the proof.)
We use this in Section~\ref{section5} to conclude over $\C$ that 
the ring $\ringRl$ contains all modular forms of weights $2$ and
above.  This of course implies Hecke stability in all weights, and
supersedes the previous result.

The overall shape of our formulas related to Hecke
operators is similar to the results in the articles of Borisov and
Gunnells~\cite{BorisovGunnells,BorisovGunnellsNonvanishing,BorisovGunnellsHigherWeight}.
Those articles work with $\Gamma_1(\myl)$, and prove their formulas via
$q$-expansions. 
Our treatment of $\Gammal$ proceeds instead from the modular
parametrization given 
by the modular curve.  We hope to treat some of the connections
between our approach and theirs, as well as the results in~\cite{Pasol},
in later work; it would also be desirable to understand the Hecke 
action better by directly connecting our relations from Laurent
expansions to the geometry of toric varieties used in 
\cite{BorisovGunnells}.

Before introducing the Weil pairing on $\El$, we 
discuss pullbacks (i.e., composition) of elements $\Ohat$ by the
multiplication map $[n]:\E\to\E$, so as to be able to define the
element $\fQ\compose[n]\in \Ohat$.  This can be done
entirely inside the formal group, since we have an expansion of the form
$t \compose [n] = nt + 2at^5(n-n^5)/5 + O(t^7) \in \ringR_1[[t]]$, so we
can obtain the Laurent expansion
$\fQ\compose[n] 
  = n^{-1}t^{-1}(1 + \lambdaQ nt + \cdots)$.
Another approach is to realize $\fQ\compose[n]$ as the formal $\myl$th root
of (a constant times) the global meromorphic function
$\thef_{\myl(\pointQ)-\myl(\Pzero)}\compose[n]$; this last function is
determined by its divisor, which is $\myl$ times the divisor $\divD$
of~\eqref{equation4.0.5} below.

\begin{definition}
\label{definition4.1}
\begin{enumerate}
\item 
Let $\pointQ \in \El - \{\Pzero\}$ and let $1 \leq n \in \Z$, with $n$
invertible in $\fieldk$.  Choose
a point $\pointQ' \in \E[n\myl]$ such that $[n]\pointQ' = \pointQ$.
Then define the element $\fQ \compose [n] := n^{-1}\fD \in \Ohat$, where
\begin{equation}
\label{equation4.0.5}
\divD 
   = \sum_{\pointT\in\E[n]} (\pointQ' \oplus \pointT)
       - \sum_{\pointT \in \E[n]}(\pointT)
   = [n]^*\bigl( (\pointQ) - (\Pzero) \bigr).
\end{equation}
We have the Laurent expansion
\begin{equation}
\label{equation4.1}
\fQ \compose [n]
   = n^{-1} t^{-1} (1 + \lambdaQ n t + \muQ n^2 t^2
       + \nuQ n^3 t^3 + O(t^4)) \in t^{-1} \ringRl[[t]].
\end{equation}
(Caution: the terms absorbed into $O(t^4)$ do \emph{not} follow the simple
initial pattern.)
Incidentally, $\thef_{\Pzero} = \thef_{\Pzero}\compose[n] = 1$.
\item
In the special case $n = \myl$, define
$\gQ = \fQ \compose[\myl]$.  The divisor $\divD$ of~\eqref{equation4.0.5}
is now principal, so $\gQ \in \fieldbigKl(\E)$ is a global meromorphic
function.
\item
The Weil pairing $e_\myl:\El\times\El \to \boldmu_\myl$ is given (as
usual) by the behavior of the functions $\gQ$ under translation by
elements of $\El$: namely,
\begin{equation}
\label{equation4.2}
\gQ(\pointP \oplus \pointR) = e_\myl(\pointQ,\pointR)
\gQ(\pointP), \text{ where } \pointQ, \pointR \in \El \text{ and }
\pointP \in  \E(\fieldbigKbar).
\end{equation}
\end{enumerate}
\end{definition}
\begin{remark}
\label{remark4.2}
If $\fieldk=\C$, consider the case when
$\pointQ = \pointP_{1/\myl}$ and $\pointR = \pointP_{\tau/\myl}$.
One can then show that our normalization gives
$e_\myl(\pointP_{1/\myl},\pointP_{\tau/\myl}) = e^{2\pi i/\myl}$.
(The easiest way to do this calculation is to avoid the Weierstrass
$\sigma$-function; instead, begin by showing that
$\theg_{\pointP_{1/\myl}}(z) = C \cdot \vartheta(\myl
z-1/\myl)/\vartheta(\myl z)$ for some nonzero constant $C$, where
$\vartheta = \vartheta_{11}$.)
\end{remark}

In weight~$1$, the Weil pairing gives rise to a subtle symmetry 
between the $\{\lambdaP\}$, essentially a duality under the Fourier
transform on $\El$ with respect to $e_\myl$.  When $\fieldk = \C$, this
subtle symmetry motivates Hecke's result that the dimension of the
space of Eisenstein series of weight~$1$ on $\Gammal$ is half the
number of cusps of $X(\myl)$ (see the end of Section~2
of~\cite{HeckeEisenstein}).  This symmetry is usually
expressed in terms of $q$-expansions of weight~$1$ Eisenstein series;
see the second identity at the beginning of Section~7
of~\cite{HeckeZurTheorie}, or the treatment in Sections 3.4
and~3.5 of~\cite{Katz}.

\begin{proposition}
\label{proposition4.3}
The following identities hold for all $\pointR \in \El$:
\begin{equation}
\label{equation4.3}
\begin{split}
\lambdaR &=
 \frac{-1}{\myl} \sum_{\pointQ \in \El}
                           \lambdaQ e_\myl(\pointQ,\pointR),\\
\xR &= - \sum_{\pointQ \in \El}
                           \muQ e_\myl(\pointQ,\pointR),\\
\yR &= -\myl \sum_{\pointQ \in \El}
                           \nuQ e_\myl(\pointQ,\pointR).\\
\end{split}
\end{equation}
(Note that $\sum_{\pointQ} = \sum_{\pointQ \in \El - \{\Pzero\}}$,
by \eqref{equation3.30} and~\eqref{equation3.31}.)
Also, by Fourier inversion,
\begin{equation}
\label{equation4.4}
 \muR = \frac{-1}{\myl^2} \sum_{\pointQ \in \El}
                             \xQ e_\myl(\pointQ,\pointR),
\qquad\qquad
 \nuR = \frac{-1}{\myl^3} \sum_{\pointQ \in \El}
                             \yQ e_\myl(\pointQ,\pointR).
\end{equation}
\end{proposition}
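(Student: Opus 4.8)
The plan is to derive the identities in~\eqref{equation4.3} from the translation behavior of the functions $\gQ$ under the Weil pairing, and then obtain~\eqref{equation4.4} by standard Fourier inversion on the finite abelian group $\El$. The starting point is the observation that, by~\eqref{equation4.2}, the function $\sum_{\pointQ \in \El} \overline{e_\myl(\pointQ,\pointR)}\, \gQ$ is a well-understood object: its behavior under translation by any $\pointS \in \El$ picks up a single character value, so it behaves like (a constant times) a translate of a single $\theg$-type function. More precisely, I expect that the key computational input is an identity expressing a translate $\tau_\pointS^*\, \theg_{\pointQ}$ in terms of $\theg_{\pointQ \oplus \pointS}$ up to an explicit constant --- this is exactly the kind of relation flagged in Remark~\ref{remark3.11}, namely $\tau_\pointS^* \thef_\divD = \thef_\divD(\pointS)\,\thef_{\divD'}$ when $\pointS$ avoids the support --- combined with the product formula~\eqref{equation2.16.5}. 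Since $\gQ = \fQ \compose [\myl]$ has divisor $\myl(\pointQ) - \myl(\Pzero)$ after pulling back, and the $\myl$-torsion is exactly the kernel of $[\myl]$, translating by $\pointS \in \El$ moves its divisor in a controlled way.

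The cleanest route is probably to consider the single function $h_\pointR := \prod_{\pointQ} \gQ^{?}$... no --- additively, the right object is the logarithmic differential. I would instead work with $\sum_{\pointQ \in \El} e_\myl(\pointQ,\pointR)\, \frac{d\gQ}{\gQ}$ and extract Laurent coefficients at $\Pzero$, since $d\gQ/\gQ$ has the $\ringRl$-balanced expansion coming from Lemma~\ref{lemma3.8}(4) applied to~\eqref{equation4.1}: explicitly $d\gQ/\gQ = t^{-1}(-1 + \lambdaQ \myl t - \xQ \myl^2 t^2/?\dots)dt$, where the low-order coefficients are governed by $\lambdaQ$, $\muQ$, $\nuQ$ scaled by powers of $\myl$ exactly as in~\eqref{equation4.1}. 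Meanwhile, because of the Weil-pairing transformation law, the differential $\sum_\pointQ e_\myl(\pointQ,\pointR)\, d\gQ/\gQ$ has all its poles at translates of a single point; summing its residues against $x$ and $y$ over $\E(\fieldbigKbar)$, in the style of the proof of Lemma~\ref{lemma3.15}, should force the coefficient identities. The first identity in~\eqref{equation4.3}, relating $\lambdaR$ to the character sum of the $\lambdaQ$, is the $t^0$-coefficient statement; the identities for $\xR$ and $\yR$ come from pairing against $x$ and $y$ respectively, picking up the powers $\myl$ and $\myl$ (from $\myl^2/\myl$ and $\myl^3/\myl^2$ after the residue bookkeeping) that appear in~\eqref{equation4.3}. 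The self-consistency of the whole package --- that $\lambdaR$, $\xR$, $\yR$ really are recovered, not just some constant multiple --- is guaranteed by the normalization $\gQ = \myl^{-1}t^{-1}(1+\cdots)$ fixed in Definition~\ref{definition4.1}, together with orthogonality $\sum_{\pointQ} e_\myl(\pointQ,\pointR) = 0$ for $\pointR \neq \Pzero$ and $= \myl^2$ for $\pointR = \Pzero$ (and the boundary convention $\lambdaP_{\Pzero} = \muP_{\Pzero} = \nuP_{\Pzero} = 0$ makes the $\pointR = \Pzero$ case of~\eqref{equation4.3} read $0 = 0$, consistently with~\eqref{equation3.12.6}).

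Once~\eqref{equation4.3} is in hand, deducing~\eqref{equation4.4} is pure formalism: the map sending a function $\phi$ on $\El$ to $\hat\phi(\pointR) = \sum_{\pointQ} \phi(\pointQ)\, e_\myl(\pointQ,\pointR)$ satisfies $\hat{\hat\phi}(\pointR) = \myl^2 \phi(\ominus\pointR)$ by Fourier inversion on $\El \cong (\Z/\myl\Z)^2$ with the perfect pairing $e_\myl$. Applying this to the first line of~\eqref{equation4.3}, read as $\lambdaR = -\myl^{-1}\widehat{(\lambda_\bullet)}(\pointR)$, gives $\widehat{(\lambda_\bullet)} = -\myl\, \lambda_\bullet$ up to the sign $\ominus$, which by $\thelambda_{\ominus\pointQ} = -\lambdaQ$ from~\eqref{equation3.12.3} and evenness of $\muP$, $x_\pointP$ and oddness of $\nuP$, $\yP$, rearranges the second and third lines of~\eqref{equation4.3} into~\eqref{equation4.4} after dividing by the appropriate power of $\myl$. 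I would just note which sign each transform absorbs and let the reader check the bookkeeping.

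The main obstacle I anticipate is pinning down the exact transformation law of $\gQ$ (equivalently of $\fQ$, equivalently of the divisor $\divD$ of Definition~\ref{definition4.1}(1)) under translation by a torsion point, \emph{with the correct constant}, so that the residue/Laurent argument yields~\eqref{equation4.3} on the nose rather than up to an unknown scalar. This is where the precise normalizations of Definition~\ref{definition2.7} and Definition~\ref{definition4.1} --- and the compatibility~\eqref{equation2.16.5} --- have to be used carefully; the combinatorics of how the divisor $\sum_{\pointT \in \E[n]}(\pointQ'\oplus\pointT) - \sum_{\pointT\in\E[n]}(\pointT)$ transforms, and how the $\myl$th-root normalization interacts with it, is the delicate point. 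Everything else --- the residue theorem, orthogonality of characters, Fourier inversion --- is routine.
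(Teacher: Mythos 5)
Your overall strategy (exploit the Weil-pairing transformation law, read off Laurent coefficients at $\Pzero$, then Fourier-invert to get~\eqref{equation4.4}) is the right shape, and your Fourier-inversion step agrees with the paper's. But the central mechanism is misidentified, and as written the argument cannot produce~\eqref{equation4.3}. The object that transforms by the character is the \emph{function} $\gQ$ itself --- equation~\eqref{equation4.2} says $\trstar \gQ = e_\myl(\pointQ,\pointR)\,\gQ$ --- whereas its logarithmic differential $d\gQ/\gQ$ is translation-\emph{invariant}: the constant $e_\myl(\pointQ,\pointR)$ dies in the logarithmic derivative. So your differential $\sum_\pointQ e_\myl(\pointQ,\pointR)\,d\gQ/\gQ$ does not have its poles concentrated at translates of a single point; it has poles along all of $\El$ and along the zero divisors $\sum_{\pointT}(\pointQ'\oplus\pointT)$, which vary with $\pointQ$ and live in $\E[\myl^2]$. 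Worse, the Laurent coefficients of $d\gQ/\gQ$ at $\Pzero$ are $-1$, $\lambdaQ\myl$, $-\xQ\myl^2$, $\yQ\myl^3$ (the analogue of~\eqref{equation3.20.5}), not $\muQ$ and $\nuQ$; so the sums $\sum_\pointQ \muQ e_\myl(\pointQ,\pointR)$ and $\sum_\pointQ \nuQ e_\myl(\pointQ,\pointR)$ that appear in~\eqref{equation4.3} never enter your computation. And if you carry out the residue pairing you propose --- summing the residues of $x\cdot\sum_\pointQ e_\myl(\pointQ,\pointR)\,d\gQ/\gQ$ over all of $\E$ --- the contribution $-\myl^2\sum_\pointQ \xQ e_\myl(\pointQ,\pointR)$ from $\Pzero$ is exactly cancelled by the contribution $\sum_\pointQ e_\myl(\pointQ,\pointR)\sum_\pointT x_{\pointQ'\oplus\pointT} = \myl^2\sum_\pointQ \xQ e_\myl(\pointQ,\pointR)$ from the zeros (via~\eqref{equation4.10}), so the residue theorem returns $0=0$.

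The paper's proof keeps the first instinct you discarded: it works with $\eta_\pointQ = \gQ\,\omega$, the function times the invariant differential. This has simple poles exactly on $\El$, with residue $\myl^{-1}e_\myl(\pointQ,\pointR)$ at $\pointR$ by the transformation law, and its Laurent expansion at $\Pzero$ carries precisely the coefficients $\lambdaQ\myl$, $\muQ\myl^2$, $\nuQ\myl^3$ from~\eqref{equation4.1}. The missing bridge is then the identification $-\myl\sum_{\pointQ\neq\Pzero}\eta_\pointQ = d\fD/\fD$ for $\divD = \sum_{\pointQ\in\El}(\pointQ)$: both sides have the same poles and residues, and both expand as $t^{-1}\bigl[(-\myl^2+1)+O(t^4)\bigr]\,dt$ at $\Pzero$. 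Translating this identity by $\pointR$ and using $\trstar\fD = C\cdot\fD\cdot(\thef_{\ominus\pointR})^{-\myl^2}$ converts the character-weighted $\mu$- and $\nu$-sums on one side into $\xR$ and $\yR$ on the other, via the expansion~\eqref{equation3.20.5} of $d\thef_{\ominus\pointR}/\thef_{\ominus\pointR}$. Without some such device linking the expansion of $\gQ$ itself (which sees $\muQ,\nuQ$) to a logarithmic differential (which sees $x,y$), the identities~\eqref{equation4.3} are out of reach; your worry about ``pinning down the exact transformation law with the correct constant'' is real, but it is not the main obstacle.
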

\begin{proof}
Let $\pointQ \in \El - \{\Pzero\}$, and consider 
$\gQ$ as in Definition~\ref{definition4.1}, with its Laurent
expansion as in~\eqref{equation4.1} for $n=\myl$.  Define the global 
meromorphic differential form $\eta_\pointQ = g_\pointQ \omega$ on
$\E$, where $\omega = (1+O(t^4))dt$ is the invariant differential; the
only singularities of $\eta_\pointQ$ are simple poles at the points of
$\El$.  Now the residue of $\eta_\pointQ$ at $\Pzero$ is
$\myl^{-1}$, and~\eqref{equation4.2} says that
$\trstar \eta_\pointQ = e_\myl(\pointQ, \pointR) \eta_\pointQ$, where 
$\tau_\pointR: \E \to \E$ is translation by $\pointR$.  Thus the
residue of $\eta_\pointQ$ at any $\pointR \in \El$ is
$\myl^{-1} e_\myl(\pointQ, \pointR)$.  Now define the differential
form $\eta = -\myl \sum_{\pointQ \in \El - \{\Pzero\}} \eta_\pointQ$.
Nondegeneracy of the Weil pairing implies that $\eta$ has simple poles
at all the points of $\El$, and 
that the residue of $\eta$ at $\Pzero$ is $-\myl^2 + 1$, while the
residue at $\pointR \in \El - \{\Pzero\}$ is $1$.
Moreover, we have the following series expansions of $\eta$ and
$\trstar \eta$ for $\pointR \neq \Pzero$ (the sums are over
$\pointQ \in \El - \{\Pzero\}$):
\begin{equation}
\label{equation4.5}
\begin{split}
\eta &= t^{-1}\Bigl[(-\myl^2 + 1)
                      - \sum_\pointQ \lambdaQ \myl t 
                      - \sum_\pointQ \muQ \myl^2 t^2
                      - \sum_\pointQ \nuQ \myl^3 t^3
                      + \cdots\Bigr] dt,\\
\trstar \eta 
  & = t^{-1}\Bigl[1
              - \sum_\pointQ \lambdaQ e_\myl(\pointQ, \pointR) \myl t
              - \sum_\pointQ \muQ e_\myl(\pointQ, \pointR) \myl^2 t^2
              - \sum_\pointQ \nuQ e_\myl(\pointQ, \pointR) \myl^3 t^3
              + \cdots \Bigr] dt.\\
\end{split}
\end{equation}
The expansion of $\trstar \eta$ holds because
$\trstar \eta =  
  - \myl \sum_\pointQ e_\myl(\pointQ, \pointR) \eta_\pointQ$.
Note that $\eta = t^{-1}((-\myl^2+1) + O(t^2))dt$
(since $\thelambda_{\ominus \pointQ} = -\lambdaQ$, so
$\sum_\pointQ \lambdaQ = 0$.)
We now claim (nontrivially) that
$\eta = d\fD/\fD$, where $\fD$ corresponds to the principal divisor 
$\divD = \Bigl(\sum_{\pointQ \in \El} (\pointQ) \Bigr) - \myl^2 (\Pzero)
   = \Bigl( \sum_{\pointQ \in \El - \{\Pzero\}} (\pointQ) \Bigr)
        + (-\myl^2 + 1) (\Pzero)$.
Indeed, note that
$\eta$ and $d\fD/\fD$ have simple poles at
the same locations, with the same residues.  Therefore $\eta-d\fD/\fD$ is
globally holomorphic, hence constant; let us show that the
difference vanishes at $\Pzero$.  We have
$\fD = \pm \myl^{-1}\psi_\myl(x,y)$ where $\psi_\myl$ is the $\myl$th
division polynomial.  Hence $\fD$ has an $\ringR_1$-balanced
Laurent expansion at $\Pzero$ of the form $\fD = t^{-\myl^2 + 1}(1 +
O(t^4))$, because $\ringR_1$ does not contain elements of degree less than
$4$. 
This shows that $d\fD/\fD = t^{-1}[(-\myl^2 + 1) + O(t^4)]dt$, and
proves our claim.  We obtain that $\eta =
t^{-1}((-\myl^2+1) + O(t^4))dt$, thereby completing the proof
of~\eqref{equation3.12.6}.  This also proves~\eqref{equation4.3} for
$\pointR=\Pzero$.

Now let $\pointR \in \El - \{\Pzero\}$, and consider the translation of the
equality $\eta = d\fD/\fD$ by $\pointR$.
This gives us $\trstar \eta = d(\trstar \fD)/\trstar \fD$.
The expansion of $\trstar \eta$ is given by~\eqref{equation4.5}.
The expansion of $d(\trstar \fD)/\trstar \fD$ can be computed
from the zeros and poles of $\fD$.  Indeed, we have
$\trstar \fD = C \cdot \fD \cdot (\thef_{(\ominus \pointR)})^{-\myl^2}$
for some nonzero constant $C$.  (Here $\thef_{(\ominus \pointR)}$ is not
a global meromorphic function on $\E$, but
$(\thef_{\ominus \pointR})^{-\myl^2}$ is fine.)
Hence $d(\trstar \fD)/\trstar \fD =
d\fD/\fD - \myl^2 d\thef_{\ominus\pointR}/\thef_{\ominus\pointR}$.
However, from \eqref{equation3.20.5} and~\eqref{equation3.12.3}, we have
\begin{equation}
\label{equation4.6}
d\thef_{{\ominus\pointR}}/\thef_{{\ominus\pointR}}
 = t^{-1}[-1 - \lambdaR t - \xR t^2 - \yR t^3 + \cdots]dt.
\end{equation}
Combining all this and comparing the Laurent expansions
in $\trstar \eta = d(\trstar \fD)/\trstar \fD$, we
obtain~\eqref{equation4.3} as desired.  Equation~\eqref{equation4.4}
then follows immediately.
\end{proof}

The relations~\eqref{equation4.4}, when combined with
\eqref{equation3.5}, imply that the $\{\muP,\nuP\}$ 
are Eisenstein series of weights $2$ and~$3$, when $\fieldk = \C$.  We
formalize this algebraically.

\begin{definition}
\label{definition4.4}
For $\wtj \in \{1,2,3\}$, we define the algebraic space $\Espace_\wtj$
of Eisenstein series of weight~$\wtj$ by
\begin{equation}
\label{equation4.7}
\Espace_1 = \linalgspan\{\lambdaP \mid \pointP \in \El\},
\qquad
\Espace_2 = \linalgspan\{\xP\},
\qquad
\Espace_3 = \linalgspan\{\yP\}.
\end{equation}
(If we wish to draw attention to the level $\myl$, we will write
$\Espace_\wtj^{\myl}$.)

We deduce from \eqref{equation4.4} and~\eqref{equation3.21} that for
all $\pointP \in \El$,
\begin{equation}
\label{equation4.8}
\muP, \lambdaP^2 \in \Espace_2,
\qquad
\nuP \in \Espace_3.
\end{equation}
From~\eqref{equation3.22}, we also obtain that for $\pointP, \pointQ,
\pointR \in \El$ with $\pointP \oplus \pointQ \oplus \pointR =
\Pzero$,
\begin{equation}
\label{equation4.9}
 \lambdaP \lambdaQ + \lambdaQ \lambdaR + \lambdaP \lambdaR \in \Espace_2.
\end{equation}
Note that in the above equation, the points $\pointP, \pointQ, \pointR$
are allowed to take the value $\Pzero$; for example, if $\pointQ =
\Pzero$, then $\lambdaR = - \lambdaP$, in which
case~\eqref{equation4.9} becomes the statement 
$-\lambdaP^2 \in \Espace_2$ that we know from~\eqref{equation4.8}.
(The result that $\muP$ and $\lambdaP^2$ are Eisenstein series, as
well as the result~\eqref{equation4.9}, were already observed for
$\Gamma_1(\myl)$ in~\cite{BorisovGunnellsNonvanishing}).
\end{definition}

In our treatment of Hecke operators, we shall need the following
identities, which are related to the fact that the trace from
$\Gamma(n\myl)$ to $\Gamma(\myl)$ of an Eisenstein series on
$\Gamma(n\myl)$ is again an Eisenstein series.
\begin{lemma}
\label{lemma4.5}
Let $n \geq 1$ be invertible in $\fieldk$.  Let $\pointP \in
\Enl$ (typically, $\pointP \in \El$), and let $\pointT \in \En$.
Consider the modular forms  
$\thelambda_{\pointP \oplus \pointT}$, $x_{\pointP \oplus \pointT}$, and
$y_{\pointP \oplus \pointT}$ on $\Gamma(n\myl)$.  We then have
\begin{equation}
\label{equation4.10}
\sum_{\pointT \in \En} \thelambda_{\pointP \oplus \pointT}
           = n \thelambda_{[n]\pointP},
\quad
\sum_{\pointT \in \En} x_{\pointP \oplus \pointT}
           = n^2 x_{[n]\pointP},
\quad
\sum_{\pointT \in \En} y_{\pointP \oplus \pointT}
           = n^3 y_{[n]\pointP}.
\end{equation}
We also have
\begin{equation}
\label{equation4.11}
\sum_{\pointT \in \En} \themu_{\pointP \oplus \pointT}
           =  \themu_{[n]\pointP},
\qquad
\sum_{\pointT \in \En} \thenu_{\pointP \oplus \pointT}
           = \frac{1}{n} \, \thenu_{[n]\pointP}.
\end{equation}
\end{lemma}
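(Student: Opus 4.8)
The plan is to prove \eqref{equation4.10} by a logarithmic-derivative computation with the function $\fD$ of Definition~\ref{definition4.1}, and to prove the two more delicate identities \eqref{equation4.11} by the Weil-pairing ``Fourier inversion'' of Proposition~\ref{proposition4.3}. First I would dispose of the case $[n]\pointP=\Pzero$, i.e.\ $\pointP\in\En$: then $\{\pointP\oplus\pointT\mid\pointT\in\En\}=\En$, so every left-hand side in \eqref{equation4.10}--\eqref{equation4.11} becomes a sum over all of $\En$ of one of $\thelambda_\pointT,\xT,\yT,\themu_\pointT,\thenu_\pointT$ (the odd ones vanishing by the $\pointT\leftrightarrow\ominus\pointT$ symmetry, $\sum\xT=0$ from the $n$-division polynomial, and $\sum\themu_\pointT=0$ as in the remark after \eqref{equation3.12.6}, all at level $n$), while every right-hand side vanishes since $\thef_{\Pzero}=1$. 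So I may assume $[n]\pointP\neq\Pzero$.

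For \eqref{equation4.10}: take $\pointQ'=\pointP$ in Definition~\ref{definition4.1}, so that $\thef_{[n]\pointP}\compose[n]=n^{-1}\fD$ for $\divD=\sum_{\pointT\in\En}(\pointP\oplus\pointT)-\sum_{\pointT\in\En}(\pointT)$. I would compute $d\fD/\fD$ two ways. On the one hand $d\fD/\fD=d\bigl(\thef_{[n]\pointP}\compose[n]\bigr)\big/\bigl(\thef_{[n]\pointP}\compose[n]\bigr)=[n]^*\bigl(d\thef_{[n]\pointP}/\thef_{[n]\pointP}\bigr)$, and substituting $t\compose[n]=nt+O(t^5)$ into the expansion \eqref{equation3.20.5} of $d\thef_{[n]\pointP}/\thef_{[n]\pointP}$ gives $d\fD/\fD=t^{-1}\bigl(-1+n\thelambda_{[n]\pointP}t-n^2x_{[n]\pointP}t^2+n^3y_{[n]\pointP}t^3+\cdots\bigr)dt$. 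On the other hand, by multiplicativity \eqref{equation2.16.5}, $d\fD/\fD=\sum_{\pointT\in\En}d\thef_{(\pointP\oplus\pointT)-(\Pzero)}/\thef_{(\pointP\oplus\pointT)-(\Pzero)}-\sum_{\pointT\in\En}d\thef_{(\pointT)-(\Pzero)}/\thef_{(\pointT)-(\Pzero)}$; expanding each summand by \eqref{equation3.20.5} (all $\pointP\oplus\pointT\neq\Pzero$ since $\pointP\notin\En$, and the $\pointT=\Pzero$ term contributes nothing), and using $\sum_{\pointT\in\En}\thelambda_\pointT=\sum_{\pointT\in\En}\xT=\sum_{\pointT\in\En}\yT=0$, this equals $t^{-1}\bigl(-1+(\sum_{\pointT\in\En}\thelambda_{\pointP\oplus\pointT})t-(\sum_{\pointT\in\En}x_{\pointP\oplus\pointT})t^2+(\sum_{\pointT\in\En}y_{\pointP\oplus\pointT})t^3+\cdots\bigr)dt$. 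Comparing the coefficients of $t^0,t^1,t^2$ yields \eqref{equation4.10}.

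For \eqref{equation4.11}: the logarithmic derivative records $\thelambda,x,y$ but not $\themu,\thenu$, so I would instead invoke Proposition~\ref{proposition4.3} with $n\myl$ in place of $\myl$ (its statement and proof go through verbatim; after a harmless extension of scalars, permissible since all forms in sight lie in $\fieldbigK(\Enl)$, I may assume $\boldmu_{n\myl}\subseteq\fieldk$). By the $\themu$-part of \eqref{equation4.4} at level $n\myl$, $\themu_{\pointP\oplus\pointT}=-(n\myl)^{-2}\sum_{\pointQ\in\Enl}\xQ\,e_{n\myl}(\pointQ,\pointP\oplus\pointT)$ for each $\pointT\in\En$; summing over $\pointT$ and using bilinearity, $\sum_{\pointT\in\En}\themu_{\pointP\oplus\pointT}=-(n\myl)^{-2}\sum_{\pointQ\in\Enl}\xQ\,e_{n\myl}(\pointQ,\pointP)\bigl(\sum_{\pointT\in\En}e_{n\myl}(\pointQ,\pointT)\bigr)$. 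The inner character sum equals $n^2$ when $\pointQ$ lies in the annihilator of $\En$ under $e_{n\myl}$ — which is exactly $[n]\Enl=\El$ — and $0$ otherwise; and for $\pointQ\in\El$ the standard compatibility of Weil pairings gives $e_{n\myl}(\pointQ,\pointP)=e_\myl(\pointQ,[n]\pointP)$. Hence $\sum_{\pointT\in\En}\themu_{\pointP\oplus\pointT}=-\myl^{-2}\sum_{\pointQ\in\El}\xQ\,e_\myl(\pointQ,[n]\pointP)=\themu_{[n]\pointP}$ by \eqref{equation4.4}. The $\thenu$-identity is proved the same way from the $\thenu$-part of \eqref{equation4.4}, the powers of $n$ combining as $(n\myl)^{-3}\cdot n^2=n^{-1}\myl^{-3}$ to produce the factor $1/n$. (In fact all of \eqref{equation4.10}--\eqref{equation4.11} can be obtained uniformly in this manner, using \eqref{equation4.3} for $\thelambda,x,y$; I keep the logarithmic-derivative proof of \eqref{equation4.10} only because it needs no extension of scalars.)

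The two Laurent-series comparisons are mechanical. The one step that needs genuine care — though nothing deep — is the pair of Weil-pairing facts on $\Enl$: that $\sum_{\pointT\in\En}e_{n\myl}(\pointQ,\pointT)$ is supported precisely on $\pointQ\in\El$, which I would get by identifying the annihilator $\En^{\perp}$ with $[n]\Enl$ (using perfectness of $e_{n\myl}$ together with the counts: $\En^{\perp}$ has order $(n\myl)^2/n^2=\myl^2$, equal to the order of $[n]\Enl$, and $[n]\Enl\subseteq\El$ since $[\myl]\compose[n]$ kills $\Enl$, whence $[n]\Enl=\El$), and the compatibility $e_{n\myl}(\pointQ,\pointP)=e_\myl(\pointQ,[n]\pointP)$ for $\pointQ\in\El$, $\pointP\in\Enl$.
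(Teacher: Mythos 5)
Your proposal is correct and follows essentially the same route as the paper: identity \eqref{equation4.10} via the logarithmic differential of the product identity $\thef_{[n]\pointP}\compose[n] = n^{-1}\bigl(\prod_{\pointT}\thef_{\pointP\oplus\pointT}\bigr)/\fD$ together with the expansion $t\compose[n]=nt+O(t^5)$, and identity \eqref{equation4.11} via the Fourier duality of Proposition~\ref{proposition4.3} at level $n\myl$, with the character sum over $\En$ supported on the annihilator $\El$. Your explicit disposal of the degenerate case $[n]\pointP=\Pzero$ (rather than only $\pointP=\Pzero$) is a small but welcome extra precaution, since the product identity fails there; otherwise the argument matches the paper's.
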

\begin{proof}
Over $\C$, equation~\eqref{equation4.10} is immediate from the
definition of $\Ej$ in \eqref{equation2.1} and~\eqref{equation2.2},
bearing in mind that $\xP$ is a difference between two $\Etwo$s.  Let us
however give a proof in our  
algebraic setting.  When $\pointP = \Pzero$, \eqref{equation4.10} reduces
to~\eqref{equation3.12.6}.  If $\pointP \neq \Pzero$, we begin by noting
the following identity, which follows by comparing zeros and poles, as well
as the leading coefficients of the Laurent expansions:
\begin{equation}
\label{equation4.12}
\thef_{[n]\pointP} \compose [n]
   = n^{-1} 
     \Bigl(\prod_{\pointT \in \En}
               \thef_{\pointP \oplus \pointT}
     \Bigr) / \fD.
\end{equation}
Here $\fD$ corresponds to the principal divisor 
$\divD = \sum_{\pointT \in \En} (\pointT) - n^2 (\Pzero)$.  As in the proof
of Proposition~\ref{proposition4.3}, we have an expansion 
$\fD = t^{-n^2+1}(1+O(t^4))$.  Now taking the logarithmic differential of
both sides of~\eqref{equation4.12} and comparing the first few
coefficients yields~\eqref{equation4.10}, as desired.

As for~\eqref{equation4.11}, we prove it using the Fourier duality of
Proposition~\ref{proposition4.3}.  (This approach also yields a 
different proof of~\eqref{equation4.10}.)  For instance,
use~\eqref{equation4.4} to express each $\themu$ in the first sum
in~\eqref{equation4.11} in terms of an $x$.  This yields
\begin{equation}
\label{equation4.13}  
\sum_{\pointT \in \En} \themu_{\pointP \oplus \pointT}
  = \sum_{\pointT \in \En} \frac{-1}{n^2\myl^2}
                   \sum_{A \in \Enl}
                        x_A e_{n\myl}(A,\pointP \oplus \pointT).
\end{equation}
Rearrange the sum as $\sum_A \sum_\pointT$, and use the property of
the Weil pairing 
\begin{equation}
\label{equation4.14}
  A \in \Enl, \quad \pointT \in \En
  \implies
 e_{n\myl}(A,\pointT) = e_n([\myl]A,\pointT)
\end{equation}
to conclude that the only surviving terms are those when $[\myl]A =
\Pzero$, in other words, for $A \in \El$.  Thus we obtain
\begin{equation}
\label{equation4.15}
\sum_{\pointT \in \En} \themu_{\pointP \oplus \pointT}
  = \frac{-n^2}{n^2\myl^2} \sum_{A \in \El} x_A e_{n\myl}(A,\pointP)
  = \frac{-1}{\myl^2} \sum_{A \in \El} x_A e_{\myl}(A,[n]\pointP),
\end{equation}
where the last equality is analogous to~\eqref{equation4.14}.  This
implies the first part of~\eqref{equation4.11}.  The second part,
involving $\thenu$, is proved similarly.
\end{proof}

The following is the main ingredient in our proof that the
degree~$2$ part of $\ringRl$ is stable under the Hecke algebra.  The
argument involves an interesting induction on the level.  We start with
forms on $\Gamma(n\myl)$, ``raise the level'' to rewrite them in terms of
forms on $\Gamma(sn\myl)$ with $s < n$, ``lower the level'' back 
to $\Gamma(s\myl)$, and repeat.

\begin{proposition}
\label{proposition4.6}
Let $n \geq 1$ and assume that $n!$ is invertible in $\fieldk$.  Let
$A,B \in \Enl$ (as before, typically $A,B \in \El$), and let $s \in
\Z$.  Then
\begin{equation}
\label{equation4.16}
\begin{split}
&\sum_{T \in \En}
   \thelambda_{A \oplus T}
   \thelambda_{B \ominus [s]T} \\
&\quad=
\bigl(\text{a linear combination of terms of the form } 
         \thelambda_{[a]A \oplus [b]B} \thelambda_{[c]A \oplus [d]B}
\bigr)\\
&\qquad+
\bigl(\text{an element of } \Espace^{n!\myl}_2
\bigr),\\
\end{split}
\end{equation}
where the linear combination above is over finitely many
$(a,b,c,d) \in \Z^4$ satisfying
\begin{equation}
\label{equation4.17}
\det \twomatr{a}{b}{c}{d} = \pm n,
\qquad\qquad
a - sb \equiv c - sd  \equiv 0 \pmod{n}.
\end{equation}
\end{proposition}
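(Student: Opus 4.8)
The plan is to argue by induction on $n$, using the two structural tools at hand: relation~\eqref{equation4.9}, which for collinear $\pointP,\pointQ,\pointR$ lets one trade any one of the three products $\thelambda_\pointP\thelambda_\pointQ$, $\thelambda_\pointQ\thelambda_\pointR$, $\thelambda_\pointP\thelambda_\pointR$ for a $\pm$-combination of the other two, modulo $\Espace_2$; and Lemma~\ref{lemma4.5}, which collapses a sum $\sum_{T\in\En}\thelambda_{W\oplus[t]T}$ (against a $T$-free factor) into $n\thelambda_{[n']W}$ for the appropriate $n'$, and similarly for $\themu,\thenu$. Since $[s]T$ depends only on $s\bmod n$ for $T\in\En$, and since neither the target monomials $\thelambda_{[a]A\oplus[b]B}\thelambda_{[c]A\oplus[d]B}$ nor the conditions~\eqref{equation4.17} change when $s$ is replaced by $s+n$, I may assume $0\le s<n$. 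The case $n=1$ is trivial (the sum is $\thelambda_A\thelambda_B$, with $\twomatr1001$); the case $s=0$ is immediate from Lemma~\ref{lemma4.5}, the sum being $n\thelambda_B\thelambda_{[n]A}$; and for $s=1$ one application of~\eqref{equation4.9} to $\{A\oplus T,\ B\ominus T\}$ — whose ``mediant'' $A\oplus B$ is already $T$-free — followed by Lemma~\ref{lemma4.5} on each resulting factor finishes it. So the content is in the range $2\le s<n$.

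Throughout, I represent each torsion point in the sum as $[a]A\oplus[b]B\oplus[t]T$ and track two invariants of a product $\thelambda_{[a_1]A\oplus[b_1]B\oplus[t_1]T}\thelambda_{[a_2]A\oplus[b_2]B\oplus[t_2]T}$: the determinant of $M=\twomatr{a_1}{b_1}{a_2}{b_2}$, which is unchanged up to sign by the~\eqref{equation4.9}-rewriting (a row operation on $M$) and gets multiplied by exactly $\pm n$ when Lemma~\ref{lemma4.5} is finally applied; and the linear relation $t\equiv a-sb\pmod n$ for each factor, which holds at the start ($(1,0,1)$ and $(0,1,-s)$) and is preserved since every manipulation is $\Z$-linear in the exponents. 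These are exactly what will force $\det M=\pm n$ and the congruences $a-sb\equiv c-sd\equiv 0\pmod n$ at the end. For $2\le s<n$ I carry out the \emph{raise then lower the level} reduction: pick $B'\in\E[sn\myl]$ with $[s]B'=B$, and use Lemma~\ref{lemma4.5} to write $\thelambda_{B\ominus[s]T}=\thelambda_{[s](B'\ominus T)}=\tfrac1s\sum_{T'\in\E[s]}\thelambda_{B'\ominus T\oplus T'}$; this turns the $T$-exponent $-s$ into $-1$, at the cost of an extra sum over $\E[s]$ and of working with points of level $sn\myl$. For each fixed $T'$ the inner $\En$-sum is now of the $s=1$ type and is dispatched as above; summing the outcome over $T'\in\E[s]$ (using Lemma~\ref{lemma4.5} again to collapse the resulting $\E[s]$-sums) leaves finitely many $T$-free products of the desired shape together with a residual sum $\sum_{T'\in\E[s]}\thelambda_{X\oplus T'}\thelambda_{Y\oplus[n]T'}$ with $X,Y$ of level $sn\myl$. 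Reducing $n\bmod s$ and changing variables — or, when $\gcd(n,s)=d>1$, performing a $\Z/s\Z$-decomposition that peels off a sub-sum over a $d$-torsion subgroup via Lemma~\ref{lemma4.5} — identifies this residual with an instance of the proposition in which $n$ is replaced by a value $<n$, so the inductive hypothesis applies. Finally one unwinds the base changes $\{X,Y\}\rightsquigarrow\{A,B'\}\rightsquigarrow\{A,B\}$: the inductive congruence conditions are precisely what forces the relevant $B'$-exponents to be divisible by $s$, so each $\thelambda_{\cdots\oplus[\ast]B'}$ becomes $\thelambda_{\cdots\oplus[\ast/s]B}$ and the determinant, divided by $s$ in this last step, returns to $\pm n$; and all the $\Espace_2$-errors, coming from finitely many uses of~\eqref{equation4.9} and of the trace identities at torsion levels dividing $n!\myl$, are gathered into one element of $\Espace^{n!\myl}_2$. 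Invertibility of $n!$ is what justifies the divisions $1/s,1/d,\dots$ appearing along the descent.

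The step I expect to be the genuine obstacle is this bookkeeping in the descent: arranging the reductions so that the process terminates (it is a Euclidean-algorithm-flavoured reduction of the pair of $T$-exponents, whose ``side terms'' must be kept under control), steering it toward a reduction step in which the surviving $T$-exponent is coprime to $n$ so that the final determinant is exactly $\pm n$ and not a proper divisor of it, and checking that all auxiliary torsion levels and denominators really stay within the bounds ``$n!$ invertible'' and ``level $n!\myl$''. By contrast, the linear-algebra tracking of $M$ and of the relation $t\equiv a-sb$ is mechanical once the framework is set up — but it has to be done carefully to match~\eqref{equation4.17} exactly.
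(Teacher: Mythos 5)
Your proposal follows essentially the same route as the paper's proof: induction on $n$ with a Euclidean descent from $(n,s)$ to $(s,n\bmod s)$, raising the level via a point $B'\in\E[sn\myl]$ with $[s]B'=B$ and Lemma~\ref{lemma4.5}, applying~\eqref{equation4.9} to the collinear triple $A\oplus T$, $B'\ominus T\oplus U$, $\ominus A\ominus B'\ominus U$ to split off the $T$-free term, and tracking the determinant and the congruence $a-sb\equiv 0\pmod n$ through the descent. The bookkeeping you flag as the main obstacle does close up exactly as you predict: the inductive hypothesis applied to the residual $\E[s]$-sum forces the $B'$-exponents to be divisible by $s$, and converting back to $B$ divides the determinant by $s$, restoring $\det=\pm n$.
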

\begin{proof}
The proof is by induction on $n$, the case $n=1$ (so $T =
\Pzero$) being trivial.  Note that the value of $s$ only matters
modulo $n$, so we henceforth assume that $0 \leq s < n$.  If $s = 0$,
then the sum over $T$ is $n \thelambda_{[n]A} \thelambda_B$
by~\eqref{equation4.10}, so we are done.  If $s>0$, we
reduce~\eqref{equation4.16} for the pair $(n,s)$ to the analogous
statement for $(s,n)$, hence for $(s, n \bmod s)$.  (This resembles 
the Euclidean algorithm.)
To this end, choose a point $B' \in \E[sn\myl]$ for which
$[s]B' = B$.  We then see from~\eqref{equation4.10} that 
\begin{equation}
\label{equation4.18}  
\thelambda_{B \ominus [s]T} 
= s^{-1} \sum_{U \in \E[s]} \thelambda_{B' \ominus T \oplus U}.
\end{equation}
Hence, up to the factor $s^{-1}$, our sum in~\eqref{equation4.16}
becomes
\begin{equation}
\label{equation4.19}
\begin{split}
\sum_{T \in \En, \, U \in \E[s]} &\thelambda_{A \oplus T}
                              \thelambda_{B' \ominus T \oplus U}\\
\equiv
\sum_{T,U} &\thelambda_{A \oplus T} \thelambda_{A \oplus B' \oplus U}
  - \sum_{T,U} \thelambda_{\ominus B' \oplus T \ominus U}
             \thelambda_{A \oplus B' \oplus U}
                      \quad \pmod{\Espace_2^{sn\myl}},\\
\end{split}
\end{equation}
where the congruence is obtained from~\eqref{equation4.9} with
$\pointP = A \oplus T$, $\pointQ = B' \ominus T \oplus U$,
and $\pointR = \ominus A \ominus B' \ominus U$; we have also
used~\eqref{equation3.12.3}.  Now the first sum on the right hand side
of equation~\eqref{equation4.19} is a constant (namely, $ns$) times
$\thelambda_{[n]A} \thelambda_{[s](A \oplus B')}
= \thelambda_{[n]A} \thelambda_{[s]A \oplus B}$, which has the desired 
form.  On the other hand, the second sum on the right hand side can be
summed first over all $T \in \En$, which by~\eqref{equation4.10}
yields a constant times 
\begin{equation}
\label{equation4.20}
\sum_{U \in \E[s]}
 \thelambda_{[-n]B' \ominus [n]U} \thelambda_{A \oplus B' \oplus U}.
\end{equation}
By the inductive hypothesis, the above sum is congruent modulo
$\Espace_2^{s! n\myl}$ to a linear combination of terms of the form
\begin{equation}
\label{equation4.21}
 \thelambda_{[a'](A \oplus B') \oplus [-nb'] B'}
 \thelambda_{[c'](A \oplus B') \oplus [-nd'] B'}
=
 \thelambda_{[a']A \oplus [\frac{a'-nb'}{s}]B}
 \thelambda_{[c']A \oplus [\frac{c'-nd'}{s}]B}
\end{equation}
where $(a',b',c',d')$ satisfy~\eqref{equation4.17} with the roles of
$s$ and $n$ interchanged; in particular, $\frac{a'-nb'}{s},
\frac{c'-nd'}{s} \in \Z$, and we get that each term is of the form
$\thelambda_{[a]A \oplus [b]B} \thelambda_{[c]A \oplus [d]B}$,
satisfying the original requirements of~\eqref{equation4.17}.
Finally, we remark that $\Espace_2^{sn\myl}$ and
$\Espace_2^{s!n\myl}$ are both subspaces of $\Espace_2^{n!\myl}$.
\end{proof}
\begin{remark}
\label{remark4.7}
The element of $\Espace_2^{n!\myl}$ above actually belongs to
$\Espace_2^{n\myl}$, but we shall not prove this in our algebraic
context; it is obvious over $\C$, since it is an Eisenstein series
with level $n!\myl$ that happens to transform under $\Gamma(n\myl)$.
(Similarly, if $A,B \in \El$, then the element of $\Espace_2$ above actually
belongs to $\Espace_2^\myl$.) It is possible to specify this element
more precisely by applying~\eqref{equation3.22} (provided
$\pointP,\pointQ,\pointR \neq \Pzero$) and~\eqref{equation4.11} in the
above proof.  This typically yields an
element of $\Espace_2$ that is a linear combination of terms
$\themu_{[a]A+[b]B}$ where $a-sb \equiv 0 \pmod{n}$.
On another topic, we observe that the linear combination
in~\eqref{equation4.16} is $\Z$-linear, with all coefficients
divisible by $n$.
\end{remark}

From now on, we shall for convenience work exclusively over $\C$.  Also, 
since $\ringR_1$ and $\ringR_2$ are the full rings of modular forms on
$\Gamma(1)$ and $\Gamma(2)$, we can restrict to $\myl \geq 3$.  As usual,
for a weight~$\wtj$ and a congruence subgroup $\Gamma$, we write:
\begin{equation}
\label{equation4.22}
  \cuspforms_\wtj(\Gamma) = \{\text{cusp forms}\}
\subset
  \modforms_\wtj(\Gamma) = \{\text{holomorphic modular forms over }\C \}.
\end{equation}
Also, an element $\gamma \in \Gamma(1)$ acts as usual on 
$\modforms_\wtj(\Gammal)$ by $f \mapsto f |_{\wtj} \gamma$,
and preserves both $\cuspforms_\wtj(\Gammal)$ and the Eisenstein
subspace of $\modforms_\wtj(\Gammal)$;
we can equivalently view $\gamma$ as an element of $\Gamma(1)/\Gammal
\isomorphic SL(2,\Z/\myl\Z)$.
Such a $\gamma$ also acts by automorphisms on $\El$ (preserving the
Weil pairing) and the ring $\ringRl$.
We have $\pointP \mapsto \pointP \cdot \gamma$, where
\begin{equation}
\label{equation4.23}
\begin{split}
\zP = \frac{\torsi \tau + \torsj}{\myl}
  &\implies
z_{\pointP \cdot \gamma} = \frac{\torsi' \tau + \torsj'}{\myl}
\text{ with }
(\torsi' \quad \torsj') = (\torsi \quad \torsj) \gamma,
\\
\pointP \in \El 
  &\implies
\lambdaP |_1 \gamma = \thelambda_{\pointP \cdot \gamma}.
\\
\end{split}
\end{equation}

We briefly review the well-known interpretation of Hecke operators in
terms of a trace between congruence subgroups.  Given a Hecke operator
described as a double coset $\Gammal \alpha \Gammal$ with
$\alpha \in GL^+(2,\Q)$, we can harmlessly multiply $\alpha$ by a
scalar to obtain a primitive integral matrix; then composing this
double coset on the left and right by the action of elements
$\gamma_1, \gamma_2 \in \Gamma(1)$ allows us to assume without loss of
generality that $\alpha = \stwomatr{n}{}{}{1}$ for some $n \geq 1$.
We then have, for $f(\tau) \in \modforms_\wtj(\Gammal)$:
\begin{equation}
\label{equation4.24}
f |_\wtj \Gammal \twomatr{n}{}{}{1} \Gammal
 = C \sum_{\gamma \in \Gamma(n\myl)\backslash\Gammal}
                \bigl(f(n\tau) \bigr) |_\wtj \gamma,
\end{equation}
where $C = C_{n,\myl,\wtj}$ is a suitable normalizing constant.  Note
that if $f(\tau) \in \ringRl$, then $f(n\tau) \in \ringR_{n\myl}$;
indeed, the map $f \mapsto f(n\tau)$ respects multiplication of forms,
so it is enough to check the above statement for the weight~$1$
Eisenstein series $\lambdaP = -\Eone(\tau,\pointP)$ that generate
$\ringRl$.  This is just the identity
\begin{equation}
\label{equation4.25}
\Eone(n\tau, \frac{\torsi \tau + \torsj}{\myl})
  = n^{-1}
    \sum_{k \bmod n}
        \Eone(\tau, \frac{\torsi n\tau + \torsj + k\myl}{n\myl}). 
\end{equation}
The sum over representatives $\gamma \in \Gamma(n\myl)\backslash\Gammal$
in~\eqref{equation4.24} is a trace from
$\modforms_\wtj(\Gamma(n\myl))$ to $\modforms_\wtj(\Gamma(\myl))$, and
we shall henceforth work with it instead of with double cosets.

We can now state and prove our result related to Hecke stability of
$\ringRl$ in weight~$2$, which will be superseded later when we show that
$\ringRl \supset \modforms_2(\Gammal)$.

\begin{proposition}
\label{proposition4.8}
Let $\fieldk = \C$.  Then the trace of a weight~$2$ element of
$\ringR_{n\myl}$ from $\modforms_2(\Gamma(n\myl))$ to
$\modforms_2(\Gamma(\myl))$ actually belongs to $\ringRl$.
(\emph{A priori,} this trace merely belongs to
$\ringR_{n\myl} \intersect \modforms_2(\Gammal)$.)
\end{proposition}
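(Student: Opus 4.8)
The plan is to reduce the statement to a single monomial $g=\lambda_A\lambda_B$, to rewrite the trace geometrically as a sum over refinements of the level structure, and then to collapse that sum using Proposition~\ref{proposition4.6} and Lemma~\ref{lemma4.5}. For the reduction: by Theorem~\ref{theorem3.12} the ring $\ringR_{n\myl}$ is generated by the weight-one forms $\{\lambdaP:\pointP\in\Enl\}$, so its weight-$2$ graded piece is the $\C$-span of the products $\lambda_A\lambda_B$ with $A,B\in\Enl$, and by $\C$-linearity it suffices to treat such a monomial. It is convenient to work modulo Eisenstein series: the trace from $\modforms_2(\Gamma(n\myl))$ to $\modforms_2(\Gammal)$ of a weight-$2$ Eisenstein series is again Eisenstein (this is what Lemma~\ref{lemma4.5} together with Definition~\ref{definition4.4} encode), and $\Espace_2^{\myl}\subset\ringRl$, so it is enough to identify $\operatorname{Tr}(\lambda_A\lambda_B)$ modulo $\Espace_2^{\myl}$. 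The \emph{a priori} claim is clear, since $f(n\tau)\in\ringR_{n\myl}$ whenever $f\in\ringRl$, and the trace of a form on $\Gamma(n\myl)$ lands in $\modforms_2(\Gammal)$.

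Second, I would rewrite the trace. By~\eqref{equation4.24} we have $\operatorname{Tr}(g)=C\sum_\gamma g|_2\gamma$ over representatives $\gamma$ for $\Gamma(n\myl)\backslash\Gammal$; since $f\mapsto f|_2\gamma$ is multiplicative and sends a weight-one generator to $\lambdaP|_1\gamma=\thelambda_{\pointP\cdot\gamma}$ (the analogue for $\Enl$ of~\eqref{equation4.23}), this equals $C\sum_\gamma\thelambda_{A\cdot\gamma}\thelambda_{B\cdot\gamma}$, with $\gamma$ running over $\ker\bigl(SL(2,\Z/n\myl\Z)\to SL(2,\Z/\myl\Z)\bigr)$ acting on $\Enl$. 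That kernel fixes $\El$ pointwise and moves $A,B$ only by translations in $\En$; unwinding the Weil pairing (which the action preserves) shows that $\{(A\cdot\gamma,B\cdot\gamma)\}_\gamma$ is, up to fixed translations, the set of pairs $\bigl(A\oplus([\bar a_1]T_1\oplus[\bar a_2]T_2),\ B\oplus([\bar b_1]T_1\oplus[\bar b_2]T_2)\bigr)$ with $(T_1,T_2)\in\En\times\En$ subject to one bilinear Weil-pairing constraint, where $\bar a_i,\bar b_i$ are the reductions mod~$n$ of the coordinates of $A,B$ in a fixed basis of $\Enl$.

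Third, I would carry out this double sum in two stages. Fixing $T_2$ and summing over $T_1$: the constraint is affine-linear in $T_1$, so the inner sum runs over a coset of a subgroup $H\subseteq\En$ and, after a translation, has the shape $\sum_{h\in H}\thelambda_{A'\oplus[\bar a_1]h}\thelambda_{B'\oplus[\bar b_1]h}$. M\"obius inversion over the divisors of $n$ together with Lemma~\ref{lemma4.5} --- here one uses that $n!$ is invertible, in order to divide --- rewrites this as a $\Z$-linear combination of sums over \emph{full} torsion groups $\E[m]$ with $m\mid n$, each of which (after substituting $T=[\bar a_1]h$ or $T=[\bar b_1]h$, shrinking $m$ when those coefficients fail to be units) is a sum $\sum_{T\in\E[m]}\thelambda_{E\oplus T}\thelambda_{F\ominus[s]T}$ of the type governed by Proposition~\ref{proposition4.6}; that proposition replaces it, modulo $\Espace_2$, by a $\Z$-combination of products $\thelambda_{[a]E\oplus[b]F}\thelambda_{[c]E\oplus[d]F}$. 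Performing now the outer sum over $T_2$ of these products, one finds that they again depend on $T_2$ through Weil-pairing-compatible translations, so the same mechanism applies; at the final step Lemma~\ref{lemma4.5} collapses a sum $\sum_{T\in\En}\thelambda_{\pointP\oplus T}=n\,\thelambda_{[n]\pointP}$, and since $[n]\pointP\in\El$ for $\pointP\in\Enl$ every surviving weight-one factor is indexed by an $\myl$-torsion point, so each remaining product lies in $\ringRl$. The accumulated Eisenstein remainder is a weight-$2$ Eisenstein series fixed by $\Gammal$ --- it is the difference between the level-$\myl$ form $\operatorname{Tr}(\lambda_A\lambda_B)$ and the $\ringRl$-combination just produced --- hence lies in $\Espace_2^{\myl}\subset\ringRl$ (compare Remark~\ref{remark4.7}); this gives $\operatorname{Tr}(\lambda_A\lambda_B)\in\ringRl$.

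The main obstacle is precisely this last bookkeeping: the sums produced by the decomposition of $\Gamma(n\myl)\backslash\Gammal$ present themselves naturally as sums over \emph{cyclic} subgroups of $\En$, whereas Proposition~\ref{proposition4.6} and Lemma~\ref{lemma4.5} require sums over \emph{full} torsion subgroups $\E[m]$, so the M\"obius/trace gymnastics converting one into the other must be arranged carefully, and one must verify that the repeated reduction terminates with weight-one forms attached to $\myl$-torsion points. The argument is cleanest when $\gcd(n,\myl)=1$, where $\Enl\isomorphic\En\directsum\El$; the general case follows by factoring $\operatorname{Tr}^{\Gamma(n\myl)}_{\Gammal}$ through intermediate principal congruence subgroups and reducing to the case of $n$ a prime power.
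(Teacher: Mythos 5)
Your overall skeleton --- reduce to a monomial $\thelambda_A\thelambda_B$ via Theorem~\ref{theorem3.12}, work modulo Eisenstein series, realize the trace as a sum over the orbit of the $\En$-components of $A,B$ under $\Gamma(n\myl)\backslash\Gammal$, and collapse that sum with Proposition~\ref{proposition4.6} and Lemma~\ref{lemma4.5} --- is exactly the paper's, and your reductions (one prime at a time, with a case distinction according to whether $n$ divides $\myl$) also match. But the step you yourself flag as ``the main obstacle'' is a genuine gap, and the tool you propose for it cannot work. When the $\En$-components $T_0,U_0$ of $A,B$ are dependent, the orbit sum is $\sum_{T\in\En}\thelambda_{A\oplus T}\thelambda_{B\oplus[s]T}$ and Proposition~\ref{proposition4.6} applies directly; the hard case is when $\{T_0,U_0\}$ is a basis of $\En$, where the orbit is the Weil-pairing level set $\{(T,U): e_n(T,U)=\zeta\}$ --- equivalently, in your parametrization, the inner sum over $T_1$ runs over a coset of a \emph{proper cyclic} subgroup $H\subset\En$ of order $n$. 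M\"obius inversion over the divisors of $n$ relates sums over the full groups $\E[m]$, $m\mid n$, to sums over elements of exact order $m$; it cannot express the indicator function of one of the $n+1$ cyclic subgroups of order $n$ in $\En\isomorphic(\Z/n\Z)^2$ (for $n$ prime), nor of a level set of $e_n$, as a $\Z$- or even $\Q$-linear combination of indicator functions of the groups $\E[m]$. So the conversion you need does not exist by that route, and the two-stage summation stalls at its first nontrivial instance.

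What the paper uses instead is the Fourier duality of Proposition~\ref{proposition4.3}: one expands the second factor as $\thelambda_{B\oplus U}=\frac{-1}{n\myl}\sum_{C\oplus V\in\Enl}\thelambda_{C\oplus V}\,e_{n\myl}(C\oplus V,B\oplus U)$, and the subsequent sum over $U$ in the level set produces a character sum $\sum_{t}e_n([\myl]V,U_T\oplus[t]T)$ that vanishes unless $V$ lies in the cyclic group generated by $T$. This collapses the basis case back to diagonal sums $\sum_{T}\thelambda_{A\oplus T}\thelambda_{C\oplus[s]T}$ over the \emph{full} group $\En$, to which Proposition~\ref{proposition4.6} applies (with a more elaborate but parallel computation when $n\mid\myl$). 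Without this harmonic-analysis step on $\En$ --- which your proposal never invokes --- the argument is incomplete exactly where you say it is; with it, the rest of your outline goes through essentially as in the paper.
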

\begin{corollary}
\label{corollary4.9}
Over $\C$, the weight~$2$ part of $\ringRl$ is stable under the action
of the Hecke algebra for $\Gammal$.
\end{corollary}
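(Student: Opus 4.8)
The plan is to deduce Corollary~\ref{corollary4.9} from Proposition~\ref{proposition4.8} together with the trace description of Hecke operators recalled in~\eqref{equation4.24}. The Hecke algebra for $\Gammal$ is spanned by the double-coset operators $[\Gammal \alpha \Gammal]$ with $\alpha \in GL^+(2,\Q)$, so it suffices to check that each such operator carries the weight~$2$ part of $\ringRl$ into itself. First I would invoke the normalization recalled just before~\eqref{equation4.24}: after rescaling $\alpha$ to a primitive integral matrix and absorbing suitable elements $\gamma_1,\gamma_2 \in \Gamma(1)$ on the left and right of the double coset, one reduces to the case $\alpha = \stwomatr{n}{}{}{1}$ for some $n \geq 1$. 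This reduction is legitimate for our problem because, by~\eqref{equation4.23}, $\Gamma(1)$ acts on $\ringRl$ by weight-preserving ring isomorphisms; in particular pre- and post-composing with $\gamma_1,\gamma_2$ keeps us inside the weight~$2$ part of $\ringRl$. So it is enough to treat the operator $f \mapsto f|_2 \Gammal \stwomatr{n}{}{}{1} \Gammal$.

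Next, given $f \in \ringRl$ of weight~$2$, the remark following~\eqref{equation4.24} gives $f(n\tau) \in \ringR_{n\myl}$, again of weight~$2$ (the map $f(\tau) \mapsto f(n\tau)$ respects multiplication of forms, and on the generators $\lambdaP$ it reduces to~\eqref{equation4.25}). By~\eqref{equation4.24}, $f|_2 \Gammal \stwomatr{n}{}{}{1} \Gammal$ equals a scalar multiple of the trace of $f(n\tau)$ from $\modforms_2(\Gamma(n\myl))$ to $\modforms_2(\Gammal)$. By Proposition~\ref{proposition4.8}, that trace lies in $\ringRl$. Hence $f|_2 \Gammal \stwomatr{n}{}{}{1} \Gammal \in \ringRl$, and, combining with the first paragraph, every Hecke operator for $\Gammal$ preserves the weight~$2$ part of $\ringRl$.

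The substance of the corollary is entirely contained in Proposition~\ref{proposition4.8}; the only point requiring care in assembling the corollary from it is the reduction to $\alpha = \stwomatr{n}{}{}{1}$ — that is, checking that the elementary-divisor normalization of $\alpha$, combined with the $\Gamma(1)$-stability of $\ringRl$, genuinely allows us to replace an arbitrary double coset by the standard one without ever leaving $\ringRl$. Everything else is a direct chaining of~\eqref{equation4.24}, the closure of $\ringRl$ under $f(\tau) \mapsto f(n\tau)$, and Proposition~\ref{proposition4.8}.
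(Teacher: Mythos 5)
Your proposal is correct and follows exactly the route the paper intends: the paper gives no separate proof of Corollary~\ref{corollary4.9}, treating it as immediate from Proposition~\ref{proposition4.8} together with the reduction to $\alpha = \stwomatr{n}{}{}{1}$, the closure of $\ringRl$ under $f(\tau)\mapsto f(n\tau)$, and the trace formula~\eqref{equation4.24}, all set up in the paragraphs preceding the proposition. Your only added content is making explicit that the $\Gamma(1)$-action (via~\eqref{equation4.23} and normality of $\Gammal$ in $\Gamma(1)$) justifies the reduction to the standard double coset, which is a correct and worthwhile clarification but not a different argument.
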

\begin{proof}[Proof of Proposition~\ref{proposition4.8}]
As mentioned above, we can assume that $\myl \geq 3$.  It is then enough to
show that the trace of any product $\lambdaP \lambdaQ = \Eone(\tau,\pointP)
\Eone(\tau,\pointQ)$ with $\pointP, \pointQ \in \Enl - \{\Pzero\}$
belongs to $\ringRl$.  
The ring $\ringRl$ contains all the Eisenstein series on $\Gammal$, so we
can work modulo Eisenstein series throughout.  As observed in
Remark~\ref{remark4.7}, this can be done even if we encounter
Eisenstein series of higher level in some intermediate steps.
Now the trace down from level $n\myl$ to level $\myl$ can be
done one prime factor at a time, so we may assume that $n$
is a prime number.  There are hence two cases to consider: (i) $n$ is prime 
and $n \notdividing \myl$, and (ii) $n$ is prime and $n | \myl$.

In case (i), we decompose $\Enl = \El \bigoplus
\En$, and note that $\Gamma(n\myl)\backslash\Gamma(\myl) \isomorphic
SL(2,\Z/n\Z)$; the action of this group affects only the $\En$ part.
Let 
$\pointP = A \oplus T_0$ and $\pointQ = B \oplus U_0$, with $A,B \in
\El$ and $T_0, U_0 \in \En$.  If $T_0 = U_0 = \Pzero$, then 
$\lambdaP \lambdaQ = \thelambda_A \thelambda_B \in \ringRl$ already.
Otherwise, we have (say) $T_0 \neq \Pzero$.  There are two
subcases: (i.a) there exists $s \in \Z/n\Z$ (possibly $s=0$) such that
$U_0 = [s]T_0$, and (i.b) $\{T_0, U_0\}$ are a basis for $\E[n]$.
In subcase (i.a), the trace of
$\thelambda_{A \oplus T_0} \thelambda_{B \oplus [s] T_0}$ is equal to a
multiple of
$\sum_{T \in \En - \{\Pzero\}}
    \thelambda_{A \oplus T} \thelambda_{B \oplus [s]T}
= \sum_{\text{all }T \in \En} - \thelambda_A \thelambda_B$.
The sum over all $T$ is in $\ringRl$ by
Proposition~\ref{proposition4.6}, and $\thelambda_A \thelambda_B \in
\ringRl$, so we are done.

In subcase (i.b), let
$\zeta = e_n(T_0, U_0)$, a primitive $n$th root of~$1$.  The trace is then
\begin{equation}
\label{equation4.26}
 \sum_{\substack{T,U \in \En \\ e_n(T,U) = \zeta}}
    \thelambda_{A \oplus T} \thelambda_{B \oplus U}
=  \frac{-1}{n\myl}
   \sum_{\substack{T,U \in \En \\ e_n(T,U) = \zeta\\ V \in \En, C \in \El}}
    \thelambda_{A \oplus T} \thelambda_{C \oplus V} 
      e_{n\myl}(C \oplus V, B \oplus U).    
\end{equation}
We have invoked~\eqref{equation4.3} above; note that $C \oplus V$
ranges over the elements of $\Enl$.  
Now $e_{n\myl}(C \oplus V, B \oplus U) = e_{\myl}([n]C,B) e_{n}([\myl]V,U)$,
so~\eqref{equation4.26} equals a linear combination of terms
(indexed by $C$) of the form
\begin{equation}
\label{equation4.27}
  \sum_{\substack{T,U,V \in \En \\ e_n(T,U) = \zeta}}
     \thelambda_{A \oplus T} \thelambda_{C \oplus V}
     e_{n}([\myl]V,U).
\end{equation}
For fixed $T$ and $V$, we must hence study the sum over those $U$ for
which $e_n(T,U) = \zeta$.  Such a $U$ exists if and only if $T \neq \Pzero$
(recall that $n$ is prime and $\zeta \neq 1$), in which case
$U$ ranges over the set
$\{U_T \oplus [t]T \mid t \in \Z/n\Z\}$ for some choice of $U_T$
(depending on $T$) with $e_n(T,U_T) = \zeta$.  The sum over $U$ thus
contains a factor $\sum_{t \in \Z/n\Z} e_n([\myl]V,U_T \oplus [t]T)$, which
vanishes unless $V$ belongs to the cyclic subgroup generated by $T$ (recall
that $n \notdividing \myl$).  We obtain
that~\eqref{equation4.27} is equal to
\begin{equation}
\label{equation4.28}
\begin{split}
  &\sum_{T \in \En - \{\Pzero\}}\quad
  \sum_{V \text{ of the form } V = [s]T}
          \thelambda_{A \oplus T} \thelambda_{C \oplus [s]T} 
           \cdot n e_n([\myl][s]T,U_T)\\
 &\quad= \sum_{s \in \Z/n\Z}
           n \zeta^{\myl s}
         \sum_{T \neq \Pzero} 
            \thelambda_{A \oplus T} \thelambda_{C \oplus [s]T},\\
\end{split}
\end{equation}
which brings us back to subcase (i.a).

We now turn to case (ii), so $\myl = Ln^k$ with
$n \notdividing L$ and $k \geq 1$.  Write
$P = A \oplus T_0$ and $Q = B \oplus U_0$ with $A,B \in \E[L]$ 
and $T_0,U_0 \in \E[n^k]$.  Our trace is a sum over
representatives for $\Gamma(Ln^{k+1})\backslash\Gamma(Ln^k)$.  Such
representatives again do not affect $A$ or $B$, and their action on $T_0$
and $U_0$ can be described by matrices in $SL(2,\Z/n^{k+1}\Z)$ that are
congruent to the identity modulo $n^k$; thus such matrices have the form 
\begin{equation}
\label{equation4.29}
\twomatr{1+n^k \alpha}{n^k \beta}{n^k\gamma}{1-n^k\alpha} 
= I + n^k M, 
\qquad
M = \twomatr{\alpha}{\beta}{\gamma}{-\alpha}
               \in M_2^{\text{trace } 0}(\Z/n\Z).
\end{equation}
Note that although we view the entries of $M$ as being in
$\Z/n\Z$, multiplying them by $n^k$ yields elements of
$n^k\Z/n^{k+1}\Z$, not zero.  We shall feel free to
use other bases for $\E[n^{k+1}] \isomorphic (\Z/n^{k+1}\Z)^2$ than the
standard basis $\{ \pointP_{\tau/n^{k+1}}, \pointP_{1/n^{k+1}} \}$; even if
the change of basis does not have determinant~$1$ (and hence changes the
Weil pairing), our description of $M$ in~\eqref{equation4.29} remains
valid.  Let us write $\hat{T_0} = [n^k]T_0$ and $\hat{U_0} = [n^k]U_0$.  We
have $\hat{T_0},\hat{U_0} \in \E[n]$, and the trace that we want
is then
\begin{equation}
\label{equation4.30}
\sum_{M \in M_2^{\text{trace } 0}(\Z/n\Z)}
  \thelambda_{A \oplus T_0 \oplus (\hat{T_0} \cdot M)}
  \thelambda_{B \oplus U_0 \oplus (\hat{U_0} \cdot M)},
\end{equation}
where the action of $M$ is analogous to that in~\eqref{equation4.23}.  Once
again, the case $\hat{T_0} = \hat{U_0} = \Pzero$ is easy
(since then $T_0, U_0 \in \E[n^k]$ and we are already in 
$\ringR_{n^k L} = \ringRl$), so  without loss of generality $\hat{T_0}
\neq \Pzero$.  We 
face analogous subcases: (ii.a) there exists $s \in \Z/n\Z$ such that
$\hat{U_0} = [s] \hat{T_0}$ and (ii.b) $\{\hat{T_0}, \hat{U_0}\}$ are a
basis for $\E[n]$.

In subcase (ii.a), the points $\hat{T_0}\cdot M$ cover
all of $\E[n]$ (including $\Pzero$), each point $\hat{T} \in \E[n]$
occurring $n$ times.  (The easiest way to see this is to write $M$ with
respect to a basis for $\E[n]$ that includes $\hat{T_0}$.)  Hence we obtain
that~\eqref{equation4.30} is a multiple of
\begin{equation}
\label{equation4.31}
 \sum_{\hat{T} \in \E[n]}
   \thelambda_{A \oplus T_0 \oplus \hat{T}}
   \thelambda_{B \oplus U_0 \oplus [s]\hat{T}}.
\end{equation}
Modulo Eisenstein series, this equals a linear combination of
terms of the form
\begin{equation}
\label{equation4.32}
 \thelambda_{[a](A \oplus T_0) \oplus [b](B \oplus U_0)}
 \thelambda_{[c](A \oplus T_0) \oplus [d](B \oplus U_0)},
\text{ for }
a + sb \equiv c + sd  \equiv 0 \pmod{n}.
\end{equation}
We observe that $[n^k]([a]T_0 \oplus [b] U_0)
 = [a]\hat{T_0} \oplus [b]\hat{U_0} = [a + sb] \hat{T_0} = \Pzero$, whereas
$[a]A \oplus [b]B \in \E[L]$, so the first factor in~\eqref{equation4.32}
is of the form $\thelambda_C$ with $C \in \E[n^k L] = \El$; the second
factor is similar, and we obtain an element of $\ringRl$, as desired.

In subcase (ii.b), we write $M$ in terms of the basis
$\{\hat{T_0}, \hat{U_0}\}$.  Our trace is now
\begin{equation}
\label{equation4.33}
 \sum_{\alpha,\beta,\gamma\in\Z/n\Z}
\thelambda_{A \oplus T_0
                 \oplus [\alpha] \hat{T_0} \oplus [\beta] \hat{U_0}}
\thelambda_{B \oplus U_0
                 \oplus [\gamma] \hat{T_0} \oplus [-\alpha] \hat{U_0}}.
\end{equation}
Similarly to subcase (i.b), we rewrite the factor
$\thelambda_{B \oplus U_0
                 \oplus [\gamma] \hat{T_0} \oplus [-\alpha] \hat{U_0}}$
in terms of the Weil pairing and $\thelambda_{C \oplus V}$, 
for $C \in \E[L]$ and $V \in \E[n^{k+1}]$.  We obtain a linear
combination of terms of the following form (here the triples
$(\alpha,\beta,\gamma) \in (\Z/n\Z)^3$ are analogous to the pairs 
$\{(T,U) \in \E[n]\times\E[n] \mid e_n(T,U) = \zeta\}$
of~\eqref{equation4.27}):
\begin{equation}
\label{equation4.34}
\sum_{\substack{\alpha,\beta,\gamma\in\Z/n\Z \\ V \in \E[n^{k+1}]}}
  \thelambda_{A \oplus T_0
                 \oplus [\alpha] \hat{T_0} \oplus [\beta] \hat{U_0}}
  \thelambda_{C \oplus V}
  e_{n^{k+1}}(
       [L]V, 
       U_0 \oplus [\gamma] \hat{T_0} \oplus [-\alpha] \hat{U_0}
             ).
\end{equation}
Now perform the sum over $\gamma$ first: the inner factor
$\sum_\gamma e_{n^{k+1}}([L]V, [\gamma] \hat{T_0})$
can be rewritten as
$\sum_\gamma e_{n}\Bigl([L]([n^k]V), [\gamma] \hat{T_0}\Bigr)$
with $[n^k]V \in \E[n]$.  Thus, as in case (i.b), the only terms that
survive are those where $[n^k]V = [s] \hat{T_0} = [sn^k] T_0$ for some
$s \in \Z/n\Z$.  Equivalently, we can write $V = [s] T_0 \oplus W$ for
some $s$ and for some $W \in \E[n^k]$.  In such a situation, we have
$e_{n^{k+1}}([L]V, [-\alpha] \hat{U_0})
 = e_n([n^k L]V, [-\alpha]\hat{U_0})
 = e_n([Ls] \hat{T_0},[-\alpha]\hat{U_0})
 = e_n\Bigl([-Ls]([\alpha]\hat{T_0} \oplus [\beta]\hat{U_0}),
            \hat{U_0}\Bigr)$.
At this point, we note that as $\alpha$ and $\beta$ vary, the point
$\hat{T} := [\alpha]\hat{T_0} \oplus [\beta]\hat{U_0}$ runs over all
points of $\E[n]$.
Hence our expression is
a linear combination of terms (indexed by $C$ and $s$) of the form
\begin{equation}
\label{equation4.35}
\sum_{\substack{\hat{T} \in \E[n] \\ W \in \E[n^k]}}
  \thelambda_{A \oplus T_0
                 \oplus \hat{T}}
  \thelambda_{C \oplus [s] T_0 \oplus W}
  e_{n^{k+1}}([Ls]T_0 \oplus [L]W, U_0)
  e_n([-Ls]\hat{T}, \hat{U_0}).
\end{equation}
Each such term contains a common factor
$e_{n^{k+1}}([Ls]T_0,U_0)$.  Also,
\begin{equation}
\label{equation4.36}
e_{n^{k+1}}([L]W, U_0) e_n([-Ls]\hat{T}, \hat{U_0})
= e_{n^{k+1}}\Bigl([L](W \ominus [s]\hat{T}), U_0\Bigr).
\end{equation}
We define $X := W \ominus [s]\hat{T} \in \E[n^k]$; this yields
a bijection from the set $\E[n]\times \E[n^k]$ to itself,
sending the pair $(\hat{T},W)$ to the pair $(\hat{T},X)$.
Our term~\eqref{equation4.35} then becomes
\begin{equation}
\label{equation4.37}
e_{n^{k+1}}([Ls]T_0,U_0)
\sum_{\substack{\hat{T} \in \E[n] \\ X \in \E[n^k]}}
  \thelambda_{A \oplus T_0
                 \oplus \hat{T}}
  \thelambda_{C \oplus [s] T_0 \oplus X \oplus [s]\hat{T}}
 e_{n^{k+1}}([L]X, U_0).
\end{equation}
Writing the sum in the order $\sum_X \sum_{\hat{T}}$, we see that the
inner sum over $\hat{T}$ is now exactly analogous
to~\eqref{equation4.31}.  In our setting, $C$ and $[s]T_0 \oplus X$
play the roles of $B$ and $U_0$ from~\eqref{equation4.31}\footnote{
Recall that $U_0$ in~\eqref{equation4.31} had the property that
$[n^k]U_0 = \hat{U_0} = [s]\hat{T_0}$.  The analogous observation in
our setting is that $[n^k]([s]T_0 \oplus X) = [s]\hat{T_0}$.
}, and we
obtain as in subcase (ii.a) that our final
expression is congruent modulo Eisenstein series to an element of
$\ringRl$.  This ends our proof.
\end{proof}

We next wish to prove weight~$3$ analogs of Propositions
\ref{proposition4.6} and~\ref{proposition4.8}, but only for modular
forms of the form $\xP\lambdaQ$, i.e., for products of two Eisenstein
series of weights $2$ and~$1$.  We again work modulo Eisenstein
series, i.e., modulo $\Espace_3$.  In this context, the analog
of~\eqref{equation4.9} is the following statement
for
$\pointP \oplus \pointQ \oplus \pointR = \Pzero$:
\begin{equation}
\label{equation4.38}
 (\xP - \xR)(\lambdaP + \lambdaQ + \lambdaR) \in \Espace_3.
\end{equation}
To see this, first observe by~\eqref{equation3.5.2} that if
$\pointP, \pointQ, \pointR \neq \Pzero$, then
the above expression is equal to $\yP - \yR \in \Espace_3$.
On the other hand, if one of the points is $\Pzero$, then $\lambdaP +
\lambdaQ + \lambdaR = 0$ by our conventions, so the above expression
is zero.

The next lemma is the weight~$3$ analog of the key computational
step~\eqref{equation4.19} occuring in
Proposition~\ref{proposition4.6}.  The same techniques work for the
weight~$2$ identity  
$(\lambdaP + \lambdaQ + \lambdaR)^2 = \xP + \xQ + \xR \in \Espace_2$; 
they can be used to prove a slightly weaker result than
Proposition~\ref{proposition4.6}, which nonetheless suffices to imply
Proposition~\ref{proposition4.8}.

\begin{lemma}
\label{lemma4.10}
Let $\fieldk = \C$, let $n \geq 1$, and let $A,B \in \E[n\myl]$ (typically
with $A,B \in \El$).  Then we have the following congruences modulo
$\Espace_3$:
\begin{equation}
\label{equation4.39}
 \sum_{T \in \E[n]} x_{A\oplus T} \thelambda_{A \oplus T}
        \equiv
    n x_{[n]A} \thelambda_{[n]A},
\end{equation}
\begin{equation}
\label{equation4.40}
\begin{split}
 \sum_{T \in \E[n]} x_{A \oplus T} \thelambda_{B \ominus T}
        &\equiv
    - nx_{[n]A}\thelambda_{[n]A}
    + n^2 x_{[n]A}\thelambda_{A \oplus B}\\
        & \qquad
    + n x_{A \oplus B} \thelambda_{[n]A}
    + n x_{A \oplus B} \thelambda_{[n]B}
    - n^2 x_{A \oplus B} \thelambda_{A \oplus B}.\\
\end{split}
\end{equation}
\end{lemma}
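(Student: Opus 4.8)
The plan is to prove both congruences by the same ``sum of residues'' technique that underlies Lemma~\ref{lemma3.15} and Proposition~\ref{proposition4.3}, applied this time to a differential form that is cubic (weight~$3$) in the coordinates rather than linear or quadratic. For~\eqref{equation4.39}, I would set $\divD = \sum_{\pointT \in \E[n]}(A \oplus \pointT) - \sum_{\pointT \in \E[n]}(\pointT)$, so that $\fD = n (\thef_A \compose [n])$ has the Laurent expansion $\fD = t^{-1}(1 + \lambdaA n t + \cdots)$ recorded in~\eqref{equation4.1}, and its divisor is $\bigl(\sum_{\pointT}(A\oplus\pointT)\bigr) - \bigl(\sum_{\pointT}(\pointT)\bigr)$. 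The key point is that the global meromorphic differential $x\,d\fD/\fD$ has zero sum of residues over $\E(\fieldbigKbar)$. At each point $A \oplus \pointT$ the residue of $d\fD/\fD$ is $+1$, at each $\pointT \in \E[n] - \{\Pzero\}$ it is $-1$, and at $\Pzero$ it is $-1 + 1 = 0$ (the contributions of $A$ translated by $\Pzero$ and of $\Pzero \in \E[n]$ cancel at leading order, but one must look at the next term of the expansion, exactly as in the proof of Proposition~\ref{proposition4.3}). Since $x = t^{-2}(1 + O(t^4))$, multiplying by $x$ promotes the residue at $\Pzero$ to a combination of the first several Laurent coefficients of $\fD$ there; expanding $\fD \compose[n]$ via~\eqref{equation4.1} and collecting the coefficient of $t^3$ inside the residue computation produces a combination of $x_{A \oplus \pointT}\thelambda_{A\oplus\pointT}$-type terms, plus terms involving only $x_{[n]A}$ or only $\muA$, $\nuA$ and the like, which lie in $\Espace_3$ by~\eqref{equation4.8}. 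Tracking signs carefully, this yields~\eqref{equation4.39} modulo $\Espace_3$.

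For~\eqref{equation4.40} I would instead start from the collinearity identity~\eqref{equation4.38}, namely $(x_{\pointP} - x_{\pointR})(\lambdaP + \lambdaQ + \lambdaR) \in \Espace_3$ whenever $\pointP \oplus \pointQ \oplus \pointR = \Pzero$. Taking $\pointP = A \oplus \pointT$, $\pointQ = B \ominus \pointT$, and $\pointR = \ominus A \ominus B$ (so $\pointR$ is independent of $\pointT$), and using~\eqref{equation3.12.3} to replace $x_{\ominus A \ominus B}$ by $x_{A \oplus B}$ and $\thelambda_{\ominus A \ominus B}$ by $-\thelambda_{A\oplus B}$, I get
\begin{equation}
\label{equation-plan-1}
x_{A\oplus\pointT}\thelambda_{A\oplus\pointT}
 + x_{A\oplus\pointT}\thelambda_{B\ominus\pointT}
 - x_{A\oplus\pointT}\thelambda_{A\oplus B}
 - x_{A\oplus B}\thelambda_{A\oplus\pointT}
 - x_{A\oplus B}\thelambda_{B\ominus\pointT}
 + x_{A\oplus B}\thelambda_{A\oplus B}
\in \Espace_3
\end{equation}
for each $\pointT \in \E[n]$. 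Now I would sum over all $\pointT \in \E[n]$. The term $\sum_\pointT x_{A\oplus\pointT}\thelambda_{A\oplus\pointT}$ is handled by~\eqref{equation4.39}; the sums $\sum_\pointT x_{A\oplus\pointT}$, $\sum_\pointT \thelambda_{A\oplus\pointT}$, $\sum_\pointT \thelambda_{B\ominus\pointT}$ are given by Lemma~\ref{lemma4.5} (equations~\eqref{equation4.10}), namely $n^2 x_{[n]A}$, $n\thelambda_{[n]A}$, and $n\thelambda_{[n]B}$ respectively (using $\sum_\pointT \thelambda_{B\ominus\pointT} = \sum_\pointT \thelambda_{B\oplus\pointT}$ since $\pointT \mapsto \ominus\pointT$ permutes $\E[n]$). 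Substituting these into the summed version of~\eqref{equation-plan-1} and solving for $\sum_\pointT x_{A\oplus\pointT}\thelambda_{B\ominus\pointT}$ gives exactly the right-hand side of~\eqref{equation4.40}, modulo $\Espace_3$.

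The main obstacle I anticipate is the bookkeeping at $\Pzero$ in the residue computation for~\eqref{equation4.39}: because $x$ has a double pole there and $d\fD/\fD$ a simple pole with residue zero, one must expand both $x$ and $\fD\compose[n]$ to order $t^3$ and verify that the ``extra'' terms not of the form $x_{A\oplus\pointT}\thelambda_{A\oplus\pointT}$ genuinely lie in $\Espace_3$ — this uses $\muA \in \Espace_2$, $\nuA \in \Espace_3$, $\lambdaA^2 \in \Espace_2$ from~\eqref{equation4.8}, together with the fact that a product like $\lambdaA \cdot \muA$ or $x_{[n]A}\thelambda_{[n]A}$ is allowed since $n x_{[n]A}\thelambda_{[n]A}$ appears on the right-hand side anyway. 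A secondary subtlety, already flagged in the excerpt's remark following~\eqref{equation4.1}, is that the Laurent coefficients of $\fD\compose[n]$ beyond $t^3$ do not follow the simple pattern $\lambdaA n^{\wtj} t^{\wtj}$, so one must be careful to use only the coefficients through $t^3$, which is precisely what the weight~$3$ residue extracts. Once these expansion details are pinned down, the rest is the linear algebra indicated above.
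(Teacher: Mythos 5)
Your derivation of \eqref{equation4.40} coincides with the paper's: apply \eqref{equation4.38} with $\pointP = A \oplus \pointT$, $\pointQ = B \ominus \pointT$, $\pointR = \ominus A \ominus B$, sum over $\pointT \in \En$, and feed in \eqref{equation4.39} and \eqref{equation4.10}. That half is correct. The gap is in your proposed proof of \eqref{equation4.39}, on which that derivation depends.

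The sum-of-residues argument cannot produce the left-hand side of \eqref{equation4.39}. For a differential of the form $x\,d\fD/\fD$, the residue at a point $\pointP \neq \Pzero$ in the support of $\divD$ is the multiplicity of $\pointP$ in $\divD$ times $x_{\pointP}$; no factor $\thelambda_{\pointP}$ appears, because the coefficients $\thelambda$, $\themu$, $\thenu$ are Laurent coefficients at $\Pzero$ attached to the divisor as a whole, not to its individual points. Concretely, with your $\divD$ one has $\fD = n\,(\thef_{[n]A}\compose[n]) = t^{-1}(1 + n\thelambda_{[n]A}t + n^2\themu_{[n]A}t^2 + \cdots)$ by \eqref{equation4.1}, hence $d\fD/\fD = t^{-1}\bigl(-1 + n\thelambda_{[n]A}t + (2n^2\themu_{[n]A} - n^2\thelambda_{[n]A}^2)t^2 + \cdots\bigr)\,dt$, and $\Res_{\Pzero}(x\,d\fD/\fD)$ is the displayed $t^2$-coefficient, which equals $-n^2 x_{[n]A}$ by \eqref{equation3.21} --- a weight-$2$ quantity. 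The residue theorem then yields $\sum_{\pointT} x_{A\oplus\pointT} - \sum_{\pointT\neq\Pzero}x_{\pointT} - n^2 x_{[n]A} = 0$, which via \eqref{equation3.11} is just the middle identity of \eqref{equation4.10} again; the whole computation lives in weight $2$, whereas \eqref{equation4.39} is a weight-$3$ statement, and there is no ``coefficient of $t^3$'' whose extraction produces the bilinear terms $x_{A\oplus\pointT}\thelambda_{A\oplus\pointT}$. (Two smaller slips: the residue of $d\fD/\fD$ at $\Pzero$ is $-1$, not $0$, unless $A\in\En$; and the function with your divisor is $\thef_{[n]A}\compose[n]$, not $\thef_{A}\compose[n]$.) The paper instead proves \eqref{equation4.39} by the same collinearity device you use for \eqref{equation4.40}, but with a double sum: apply \eqref{equation4.38} to $\pointP = A\oplus T$, $\pointQ = \ominus A\oplus U$, $\pointR = \ominus T\ominus U$ and sum over all $T,U\in\En$; four of the six resulting double sums vanish by \eqref{equation3.12.6} and \eqref{equation3.12.3}, one equals $n^2\sum_T x_{A\oplus T}\thelambda_{A\oplus T}$, and one equals $-n^3 x_{[n]A}\thelambda_{[n]A}$ by \eqref{equation4.10}. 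Replacing your residue argument by this step makes the rest of your proof go through.
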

\begin{proof}
To show~\eqref{equation4.39}, we take
$\pointP = A \oplus T$, $\pointQ = \ominus A \oplus U$, and
$\pointR = \ominus T \ominus U$ in~\eqref{equation4.38}, and we sum the
result over all $T,U \in \E[n]$, knowing that the final result will be
$\equiv 0$ modulo $\Espace_3$.  We now observe that
\begin{equation}
\label{equation4.41}
\begin{split}
\sum_{T,U \in \E[n]} x_{A \oplus T} \thelambda_{A \oplus T}
        &= n^2 \sum_T x_{A \oplus T} \thelambda_{A \oplus T},\\
\sum_{T,U} x_{A \oplus T}\thelambda_{\ominus A \oplus U}
        &= n^2 x_{[n]A} \cdot n \thelambda_{[-n]A}
        = -n^3 x_{[n]A} \thelambda_{[n]A},\\
\sum_{T,U} x_{A \oplus T}\thelambda_{\ominus T \ominus U}
        &= \sum_{T,V \in \E[n]}  x_{A \oplus T}\thelambda_{V} = 0,\\
\sum_{T,U} x_{\ominus T \ominus U} \thelambda_{A \oplus T}
        &= \sum_{T,V} x_V \thelambda_{A \oplus T} = 0;
\text{ similarly, }\sum_{T,U} x_{\ominus T \ominus U}
        \thelambda_{\ominus A \ominus U} = 0,\\
\sum_{T,U} x_{\ominus T \ominus U}  \thelambda_{\ominus T \ominus U}
        &= n^2 \sum_V x_V \thelambda_V
        = n^2 \sum_V x_{\ominus V} \thelambda_{\ominus V} 
        = -(\text{itself}) = 0,
\end{split}
\end{equation}
where we have used~\eqref{equation4.10} and~\eqref{equation3.12.6} as
needed.

For the proof of~\eqref{equation4.40}, we take the sum over
all $T$ in $\E[n]$ of~\eqref{equation4.38} with
$\pointP = A \oplus T$, $\pointQ = B \ominus T$, and
$\pointR = \ominus A \ominus B$
(so $\lambdaR = -\thelambda_{A \oplus B}$ 
and $\xR = x_{A \oplus B}$).  We then proceed as in the
proof of~\eqref{equation4.39}, while using~\eqref{equation4.39} at one
point, to obtain the desired result.
\end{proof}

We can now generalize Propositions \ref{proposition4.6}
and~\ref{proposition4.8} to weight~$3$.

\begin{proposition}
\label{proposition4.11}
Make the same hypotheses as in Lemma~\ref{lemma4.10}, and let $s \in
\Z$.  Then we have the following congruence modulo $\Espace_3$:
\begin{equation}
\label{equation4.42}
\begin{split}
&\sum_{T \in \E[n]} x_{A \oplus T} \thelambda_{B \ominus [s]T}\\
&\quad \equiv
\bigl( \text{a linear combination of terms of the form } 
         x_{[a]A \oplus [b]B} \thelambda_{[c]A \oplus [d]B}
\bigr),\\
&\qquad\qquad\text{with }
  a - sb \equiv c - sd \equiv 0 \pmod{n}.\\
\end{split}
\end{equation}
(An analogous statement holds for sums
$\sum_T x_{A \ominus [s]T} \thelambda_{B \oplus T}$, in which case the
congruence condition modulo $n$ becomes $-sa + b \equiv -sc + d \equiv
0$.)
In contrast to~\eqref{equation4.17}, some terms above may have
$\det \stwomatr{a}{b}{c}{d} \neq \pm n$.

Furthermore, if $\pointP, \pointQ \in \E[n\myl]$, then the trace of
$\xP \lambdaQ \in \ringR_{n\myl}$ down to
$\Gammal$ is congruent modulo $\Espace_3$ to a linear combination of
terms $\xR \lambdaS \in \ringRl$, with $\pointR, \pointS \in \El$.
\end{proposition}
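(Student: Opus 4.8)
The plan is to mirror the proof of Proposition~\ref{proposition4.8} almost verbatim, replacing its weight~$2$ inputs --- Proposition~\ref{proposition4.6} and~\eqref{equation4.9} --- by their weight~$3$ analogues, namely~\eqref{equation4.42} (just proved) and~\eqref{equation4.38}. As there, since $\ringRl$ already contains all Eisenstein series on $\Gammal$ in weight~$3$ by Theorem~\ref{theorem3.9}, we may work modulo $\Espace_3$; and since the trace from $\Gamma(n\myl)$ down to $\Gammal$ factors one prime at a time, we may assume $n$ is prime, allowing intermediate Eisenstein series of higher level exactly as in Remark~\ref{remark4.7}. It then suffices to trace a single product $\xP\lambdaQ$ with $\pointP,\pointQ\in\Enl-\{\Pzero\}$.

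First I would split into the two cases (i) $n\notdividing\myl$ and (ii) $n\mid\myl$. In case~(i), write $\Enl=\El\directsum\En$, $\pointP=A\oplus T_0$, $\pointQ=B\oplus U_0$ with $A,B\in\El$ and $T_0,U_0\in\En$; in case~(ii), write $\myl=Ln^k$ with $n\notdividing L$, decompose over $\E[L]\directsum\E[n^k]$, pass to $\E[n^{k+1}]$ where the coset representatives act through matrices $I+n^kM$ with $M$ of trace~$0$, and set $\hat T_0=[n^k]T_0$, $\hat U_0=[n^k]U_0$. In each case there are two subcases: (a) $U_0$ is a multiple of $T_0$ (resp.\ $\hat U_0$ a multiple of $\hat T_0$), and (b) the two points form a basis of $\En$ (resp.\ of $\E[n]$). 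In subcase~(a), the trace is a constant multiple of a sum of the shape $\sum_{T\in\En}x_{A'\oplus T}\,\thelambda_{B'\ominus[s]T}$ (or the variant of~\eqref{equation4.42} with the shift on the $x$-argument), where $A',B'$ are suitable translates of $A,B$ by the fixed parts of $\pointP,\pointQ$; then~\eqref{equation4.42} rewrites this, modulo $\Espace_3$, as a linear combination of terms $x_{[a]A'\oplus[b]B'}\thelambda_{[c]A'\oplus[d]B'}$ with $a-sb\equiv c-sd\equiv 0\pmod n$. Exactly the computation in case~(ii.a) of the proof of Proposition~\ref{proposition4.8} then shows that these congruences kill the $\E[n^k]$-components, forcing $[a]A'\oplus[b]B'$ and $[c]A'\oplus[d]B'$ into $\El$; hence each such term is of the form $\xR\lambdaS\in\ringRl$.

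Subcase~(b) is reduced to subcase~(a) by Fourier duality on the torsion group, exactly as in subcase~(i.b) of Proposition~\ref{proposition4.8}: one expands the factor $\lambdaQ=\thelambda_{B\oplus U_0}$ using the first identity of~\eqref{equation4.3} over $\Enl$ (resp.\ over $\E[n^{k+1}]$), writes the resulting Weil pairing as a product $e_\myl(\cdot,\cdot)\,e_n(\cdot,\cdot)$ by~\eqref{equation4.14}, and performs the inner sum over the $n$-torsion phase. Since $n$ is prime, that sum vanishes unless the newly introduced torsion point becomes a multiple of the running summation variable, which collapses the ``basis'' configuration into a ``multiple'' one, i.e.\ back to subcase~(a). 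Applying subcase~(a) to the reduced sums and absorbing all Eisenstein contributions into $\Espace_3$ finishes the argument.

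The step requiring the most care --- and, as in Proposition~\ref{proposition4.8}, the only one that is genuinely lengthy --- is the bookkeeping in case~(ii): keeping track of how $[n^k]$, the matrices $I+n^kM$, the points $\hat T_0,\hat U_0$, and the pairings $e_n$, $e_{n^{k+1}}$, $e_{n\myl}$ interact, paralleling~\eqref{equation4.33}--\eqref{equation4.37}. Nothing conceptually new arises beyond the weight~$2$ case: one simply replaces every product $\thelambda\,\thelambda$ by $x\,\thelambda$ and every invocation of Proposition~\ref{proposition4.6} by~\eqref{equation4.42}, and checks that each intermediate term still lands in $\ringRl$ modulo $\Espace_3$.
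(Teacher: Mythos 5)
Your outline of the ``Furthermore'' (trace) part does follow the paper's route --- the paper likewise says that part ``follows the argument of Proposition~\ref{proposition4.8} with only trivial changes,'' with the $\lambda$-factor expanded by Fourier duality in the basis subcases. But there is a genuine gap: you treat \eqref{equation4.42} as ``just proved'' and use it as an input, when in fact it is the first assertion of the proposition and still has to be established. Lemma~\ref{lemma4.10} only gives the special cases: \eqref{equation4.40} is \eqref{equation4.42} for $s=1$, and \eqref{equation4.39} is the instance $(B,s)=(A,-1)$; the case $s=0$ comes from~\eqref{equation4.10}. For general $s$ one must run the Euclidean-algorithm induction on $s$ exactly as in Proposition~\ref{proposition4.6}: choose $B'$ with $[s]B'=B$, expand $\thelambda_{B\ominus[s]T}$ via~\eqref{equation4.10}, and apply~\eqref{equation4.40} to the $T$-part of $\sum_{T,U}x_{A\oplus T}\thelambda_{B'\ominus T\oplus U}$, invoking~\eqref{equation4.39} along the way (in the variant $\sum_T x_{A\ominus[s]T}\thelambda_{B\oplus T}$ one also meets a sum $\sum_{U\in\E[s]}x_{[n]A'\oplus[n]U}\thelambda_{[n]A'\oplus[n]U}$ that is simplified by~\eqref{equation4.39}). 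None of this appears in your proposal, so the first half of the proposition is unproved.

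A smaller omission in the trace part: in weight~$2$ the product $\lambdaP\lambdaQ$ is symmetric, so one may assume $T_0\neq\Pzero$ (otherwise the product already lies in $\ringRl$). Here $\xP\lambdaQ$ is not symmetric, so you cannot reduce to $T_0\neq\Pzero$ (resp.\ $\hat{T_0}\neq\Pzero$) for free: if $T_0=\Pzero$ but $U_0\neq\Pzero$ the product is not yet in $\ringRl$. The fix is easy --- then $\pointP\in\El$ and the trace equals $\xP\tr(\lambdaQ)$, handled by~\eqref{equation4.10} or by noting that the trace of an Eisenstein series is an Eisenstein series --- but it does need to be said.
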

\begin{proof}
The proof of~\eqref{equation4.42} follows the same lines as the proof
of Proposition~\ref{proposition4.6}, by a similar induction
on $s$.  For $s=0$, we use~\eqref{equation4.10}, and
we have already proved the case $s=1$ in~\eqref{equation4.40}.  The
key step in the induction (analogous to~\eqref{equation4.19}) amounts
to applying~\eqref{equation4.40} to the $T$-part of the sum
$\sum_{T,U} x_{A\oplus T} \thelambda_{B' \ominus T \oplus U}$.  The
ideas are essentially the same as before, with the use
of~\eqref{equation4.39} thrown in for good measure.  It is worth
pointing out that while carrying out the same proof in the case of
$\sum_T x_{A \ominus [s]T} \thelambda_{B \oplus T}$, we encounter
the sum
$\sum_{U \in \E[s]}
    x_{[n]A' \oplus [n]U} \thelambda_{[n]A' \oplus [n]U}$,
where $[s]A' = A$.  Write $d=gcd(n,s)$ and $\hat{s} = s/d$;
then the sum over $U$ can be rewritten as
$d^2 \cdot \sum_{\hat{U} \in \E[\hat{s}]}
   x_{[n]A' \oplus \hat{U}} \thelambda_{[n]A' \oplus \hat{U}}$,
which we simplify using~\eqref{equation4.39}.

As for the proof of the statement about the trace of $\xP\lambdaQ$, it
follows the argument of Proposition~\ref{proposition4.8} with only
trivial changes.  The only point worth mentioning is that the roles of
$T_0$ and $U_0$ are no longer symmetric, so we cannot simply assume
that $T_0$ in case (i) (respectively, $\hat{T_0}$ in case (ii)) is not
equal to $\Pzero$.  However, if $T_0$ (respectively, $\hat{T_0}$) is equal
to $\Pzero$, then $\pointP \in \El$ already, and the trace is then equal
to $\xP \tr(\lambdaQ)$, which is easy to analyze
using~\eqref{equation4.10}.
\end{proof}

\section{Generating all modular forms in weights $\geq 2$, and a model 
  for $X(\myl)$}
\label{section5}

The main result of this section, Theorem~\ref{theorem5.1} below, is
that the ring $\ringRl$ contains all modular forms
on $\Gammal$ in weights $2$ and above.  Propositions \ref{proposition4.8}
and~\ref{proposition4.11} play a key role in the proof.
The result yields a general method to find explicit
models for the modular curve $X(\myl)$, in Theorem~\ref{theorem5.5}
below.

We prove Theorem~\ref{theorem5.1} via the
nonvanishing of a special value of an $L$-function, which is also the
strategy
of~\cite{BorisovGunnellsNonvanishing,BorisovGunnellsHigherWeight}.
Our proof brings in the $L$-function via a Rankin-Selberg integral, in
contrast to the approach of Borisov and Gunnells, which involves
$q$-expansions whose coefficients are modular symbols.  It is worth
noting that one can give a much simpler proof of the (rather weaker)
fact that $\ringRl$ contains all modular forms in sufficiently high
weights.  To see this, note first that the ring of all modular forms
is the graded integral closure of $\ringRl$ in their common field of
fractions $\fieldbigKl$. 
(This is a pleasant exercise; part of the proof involves observing that
$\fieldbigKl$ contains $\acoeff$, $\bcoeff$, and all the $\xP$s and 
$\yP$s, which, by Proposition~6.1 of \cite{Shimura}, suffice to
generate the function field of $X(\myl)$ via weight~$0$ meromorphic
ratios of elements of $\ringRl$.)  Hence
$X(\myl) = \mathbf{Proj}~\ringRl$; since $X(\myl)$ is nonsingular, it is 
then a standard fact that the graded components of the two rings ($\ringRl$
and the ring of modular forms) agree in sufficiently high weights --- see
for example~\cite{Hartshorne}, Section II.5.19 and Exercises II.5.9,
II.5.14.  Precise but large bounds for the meaning of ``sufficiently high''
for arbitrary  curves are given in~\cite{GrusonLazarsfeldPeskine}, but
they of course grow with the genus of the curve, which for $X(\myl)$
is $O(\myl^3)$.  The interest of our results, as well as those of
Borisov-Gunnells, is that they give a fixed value for ``sufficiently
high'': $2$ in our result for $\Gammal$, and $3$ for their result for
$\Gamma_1(\myl)$ (where they obtain all cusp forms modulo Eisenstein
series, but potentially miss some Eisenstein series).

\begin{theorem}
\label{theorem5.1}
Let $\fieldk = \C$.  Then $\ringRl$ contains all modular forms on $\Gammal$
of weight $2$ and above.  In other words, $\ringRl$ ``misses'' precisely
the cusp forms in weight~$1$.
\end{theorem}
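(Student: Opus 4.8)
The plan is to deal with weights $2$ and $3$ separately, by analytic means, and then to propagate the conclusion to all higher weights by a short argument on the curve $X(\myl)$. We work over $\fieldk=\C$, and since $\ringR_1$ and $\ringR_2$ are already the full rings of modular forms on $\Gamma(1)$ and $\Gamma(2)$ we may assume $\myl\geq 3$. As $\ringRl$ contains every Eisenstein series in every weight (Theorem~\ref{theorem3.9} together with Proposition~\ref{proposition2.10}), the theorem amounts to showing $\ringRl\supseteq\cuspforms_\wtj(\Gammal)$ for all $\wtj\geq 2$; equivalently, by the Petersson product, that no nonzero cusp form of weight $\wtj\geq 2$ is orthogonal to the corresponding graded piece $(\ringRl)_\wtj$.

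\emph{Weight $2$.} The weight-$2$ piece $W_2:=(\ringRl)_2$ is spanned by the products $\lambdaP\lambdaQ$ of weight-$1$ Eisenstein series (Theorem~\ref{theorem3.12}), contains all weight-$2$ Eisenstein series, and by Corollary~\ref{corollary4.9} is stable under the Hecke algebra of $\Gammal$ (and under the $SL(2,\Z/\myl\Z)$-action of~\eqref{equation4.23}). Since the prime-to-$\myl$ Hecke operators are normal for the Petersson product, $W_2^{\perp}\cap\cuspforms_2(\Gammal)$ is again Hecke-stable, so it suffices to show that no nonzero element of the oldspace of any newform $h$ occurring in $\cuspforms_2(\Gammal)$ is orthogonal to $W_2$. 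A Rankin--Selberg (triple-product) unfolding identifies $\innerprod{h}{\lambdaP\lambdaQ}$, up to explicit nonzero archimedean and constant-term factors, with a product of central special values $L(h\otimes\chi,1)$, where $\chi$ runs through products of Dirichlet characters of modulus dividing $\myl$. This reduces the weight-$2$ statement to the nonvanishing of suitable such twisted central values --- the single genuinely analytic ingredient, and the analogue of the input used in the corresponding weight-$2$ result of~\cite{BorisovGunnellsNonvanishing} --- after which $W_2=\modforms_2(\Gammal)$.

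\emph{Weight $3$.} Let $W_3\subseteq(\ringRl)_3$ be the span of the products $\xP\lambdaQ$ together with the weight-$3$ Eisenstein space $\Espace_3$ (both $\xP$ and $\lambdaQ$, and all of $\Espace_3$, lie in $\ringRl$). This space is Hecke-stable: the level-raising operator $f(\tau)\mapsto f(\intn\tau)$ sends $\xP\lambdaQ$ to a sum of products $x_{P'}\lambda_{Q'}$ at level $\Gamma(\intn\myl)$ by the distribution relation~\eqref{equation4.25} in weight $1$ and its analogue in weight $2$, the $SL(2,\Z/\myl\Z)$-action preserves this shape, the trace back to level $\myl$ then lands in $W_3$ modulo $\Espace_3$ by the last part of Proposition~\ref{proposition4.11}, and Hecke operators preserve $\Espace_3$. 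As in weight $2$ it therefore suffices that no nonzero $h\in\cuspforms_3(\Gammal)$ be orthogonal to $W_3$; but the Rankin--Selberg unfolding of $\innerprod{h}{\xP\lambdaQ}$ now yields values $L(h\otimes\chi,s)$ at $s=2$, i.e.\ on the \emph{edge} $\Real s=(\wtj+1)/2$ of the critical strip rather than at the centre of symmetry, and these are nonzero by the classical nonvanishing of $\mathrm{GL}_2$ $L$-functions on that line. Hence $W_3=\modforms_3(\Gammal)$, and in particular $(\ringRl)_3=\modforms_3(\Gammal)$.

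\emph{Weights $\wtj\geq 4$, and the main difficulty.} Let $\omega$ be the Hodge line bundle on $X(\myl)$, so that $\modforms_\wtj(\Gammal)=H^0(X(\myl),\omega^{\otimes\wtj})$; writing $g$ for the genus and $c\geq 3$ for the number of cusps, the Kodaira--Spencer isomorphism $\omega^{\otimes 2}\cong\Omega^1_{X(\myl)}(\mathrm{cusps})$ gives $\deg\omega^{\otimes m}=m(g-1+c/2)$, whence $\deg\omega^{\otimes m}\geq 2g+1$ for every $m\geq 2$. By the standard fact that on a smooth projective curve of genus $g$ the multiplication map $H^0(L)\otimes H^0(M)\to H^0(L\otimes M)$ is surjective whenever $\deg L,\deg M\geq 2g+1$ (a form of Castelnuovo's theorem on projectively normal curves; see, e.g.,~\cite{GrusonLazarsfeldPeskine} and the references there), taking $L=\omega^{\otimes 2}$ and $M=\omega^{\otimes(\wtj-2)}$ gives $\modforms_\wtj(\Gammal)=\modforms_2(\Gammal)\cdot\modforms_{\wtj-2}(\Gammal)$ for all $\wtj\geq 4$; an immediate induction on $\wtj$ (base cases $\wtj=4$ and $\wtj=5$ using the weight-$2$ and weight-$3$ results, and $\modforms_2,\modforms_3\subseteq\ringRl$) then shows $\modforms_\wtj(\Gammal)\subseteq\ringRl$ for all $\wtj\geq 2$. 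The algebro-geometric step and the Hecke-stability inputs are essentially formal given Sections~\ref{section3} and~\ref{section4}, so the one place where genuine work is required is the nonvanishing of the twisted central $L$-value in weight $2$; the weight-$3$ analogue is unproblematic precisely because there the relevant point has been pushed onto the edge of the critical strip.
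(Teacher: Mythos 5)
Your overall architecture coincides with the paper's: reduce to weights $2$ and $3$ via surjectivity of multiplication maps for line bundles of degree $\geq 2g+1$ on $X(\myl)$, and handle those two weights by pairing cusp forms against products of Eisenstein series via Rankin--Selberg, with a central-value nonvanishing input in weight $2$ and an edge-of-critical-strip nonvanishing in weight $3$. The weight $\geq 4$ step and the weight-$3$ analytic input are essentially the paper's. But the way you close the orthogonality argument has a genuine gap. Stability of $W_\wtj^{\perp}\intersect\cuspforms_\wtj(\Gammal)$ under the prime-to-$\myl$ Hecke operators only shows that this complement is a sum of subspaces of the oldspaces attached to the various newforms $h$: those operators act by scalars on each oldspace, so \emph{every} subspace of an oldspace is stable, and nothing forces the complement to contain $h$ itself. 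You correctly state that one must rule out \emph{every} nonzero oldspace vector, but then the Rankin--Selberg computation you invoke only shows $\innerprod{h}{\lambdaP\lambdaQ}\neq 0$ for the newform $h$, which does not exclude a nonzero oldspace combination orthogonal to all of $W_2$. The paper argues in the opposite direction: it takes an arbitrary nonzero $f$ in the complement, builds a newform $F=\sum_i c_i\, f|\alpha_i$ with $\alpha_i\in GL^+(2,\Q)$, chooses $\Eis\Eis'$ with $\innerprod{F}{\Eis\Eis'}\neq 0$ by Shimura's formula \cite{ShimuraSpecialValues}, and then moves the $\alpha_i$ onto the Eisenstein product. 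This produces products $\lambdaP\lambdaQ$ and $\xP\lambdaQ$ with $\pointP,\pointQ\in\E[n\myl]$ for possibly large $n$, and the load-bearing role of Propositions~\ref{proposition4.8} and~\ref{proposition4.11} is precisely that the traces of these \emph{higher-level} products down to $\Gammal$ land in $\ringRl$ modulo Eisenstein series. In your write-up those propositions appear only as a source of ``Hecke stability,'' which both misstates their use and leaves the argument unable to close; note that the Eisenstein series attached by Shimura's formula to a newform of level $\myl^2$, say, genuinely live at level larger than $\myl$, so the higher-level trace step cannot be avoided.

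A second, smaller gap: the weight-$2$ nonvanishing is asserted but not supplied. What is needed is the \emph{existence} of Dirichlet characters $\xi,\psi$ with $(\xi\psi)(-1)=-1$ and $L(1,F,\xi)\,L(1,F,\psi)\neq 0$; an individual twisted central value can perfectly well vanish. The paper gets this from Theorem~2 of~\cite{ShimuraPeriods}, and this is exactly the point where the $\Gammal$ statement improves on the $\Gamma_1(\myl)$ result of~\cite{BorisovGunnellsNonvanishing} (where only cusp forms with nonvanishing central value are captured): full level structure makes enough twists available. Finally, in weight $3$ one must also arrange $\chi\xi\psi\neq 1$ so that the weight-$2$ Eisenstein series $\Eis'$ is holomorphic; since the nonvanishing on the edge of the critical strip holds for every choice of characters, this is easy, but it should be said.
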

\begin{proof}
Since $\ringRl$ contains all modular forms for $\myl \leq 2$, we as usual
restrict to the case $\myl \geq 3$.
Our first claim is that it is enough to show that $\ringRl$ contains all
of $\modforms_2(\Gammal)$ and $\modforms_3(\Gammal)$.  To see this,
observe that $\Gammal$ has no elliptic elements or irregular cusps;
hence there exists a line bundle $\LL$ on $X(\myl)$ such that for all
$\wtj$, we have 
$\modforms_\wtj(\Gammal) = H^0(X(\myl),\LL^{\tensor \wtj})$.  Moreover,
elements of $\modforms_2$ can be viewed as $1$-forms on $X(\myl)$ with at
worst a simple pole at each cusp.  Hence the degree of $\LL^{\tensor 2}$ is
equal to $2g-2 + \kappa$, where $g$ is the genus of
$X(\myl)$, and $\kappa$ is the number of cusps.  Since $\kappa \geq 4$ for
$\myl \geq 3$, by standard formulas for modular curves (e.g., Section~1.6
of~\cite{Shimura}), we obtain that $2 \deg \LL \geq 2g + 2$.  This is
enough to imply that the multiplication map $\modforms_{\wtj}(\Gammal)
\tensor \modforms_{\wtj'}(\Gammal) \to \modforms_{\wtj+\wtj'}(\Gammal)$ is
surjective for $\wtj, \wtj' \geq 2$, since the degrees of
$\LL^{\tensor \wtj}$ and $\LL^{\tensor \wtj'}$ are both $\geq 2g+1$ (for a
sketch of this standard result, see Lemma~2.2 of~\cite{KKM}; the survey
in Section~1 of~\cite{Lazarsfeld} is also a particularly useful
reference).  Hence any ring of modular forms containing
$\modforms_2(\Gammal)$ and $\modforms_3(\Gammal)$ must contain all forms in
higher weights.
Since $\ringRl$ contains all Eisenstein series on
$\Gammal$, we are reduced to checking whether $\ringRl$ contains all of
$\cuspforms_\wtj(\Gammal)$ for $\wtj \in \{2,3\}$, or alternatively to
checking that the orthogonal complement $[\ringRl \intersect
\cuspforms_\wtj(\Gammal)]^\perp$ in $\cuspforms_\wtj(\Gammal)$ with
respect to the Petersson inner product is zero.

We study this orthogonal complement
using a result of Shimura~\cite{ShimuraSpecialValues}, namely that a
suitable Rankin-Selberg convolution of a newform $F$ with an
Eisenstein series gives a product of two special values of Hecke  
$L$-functions of $F$ twisted by Dirichlet characters $\xi, \psi$.
More precisely, Theorem~2 (with $r=0$) of~\cite{ShimuraSpecialValues}, and
equation~(4.3) of that article (with $k=\wtj \geq 2$, $l=1$, and
$m=\wtj-1$) imply the following statement for any
$\wtj \geq 2$: 
let $F \in \cuspforms_\wtj$ be a newform with character $\chi$, and let 
$\xi, \psi$ be Dirichlet characters with $(\xi\psi)(-1) = -1$; then there
exists a product $\Eis \Eis'$ of two Eisenstein series, with $\Eis \in
\Espace_1$ and $\Eis' \in \Espace_{\wtj-1}$, such that
\begin{equation}
\label{equation5.1}
\langle F, \Eis \Eis'\rangle = C \cdot L(\wtj-1, F, \xi) L(\wtj-1, F, \psi)  
\end{equation}
with an explicit nonzero constant $C$.  (Here, if $\wtj=3$, we must have
$\chi \xi \psi \neq 1$ in order for $\Eis' \in \Espace_2$ to be
holomorphic.)  Note that we have normalized the Petersson inner product so
that it is insensitive to the choice of common congruence subgroup $\Gamma$
with respect to which $F$, $\Eis$, and $\Eis'$ are all invariant.

We deduce from~\eqref{equation5.1} that for a given $F$, we can choose
$\xi$ and $\psi$ (and, with them, $\Eis$ and $\Eis'$) so as to make
$\langle F, \Eis \Eis' \rangle \neq 0$.
Indeed, when $\wtj \geq 3$, then the $L$-functions on the right side
are nonzero for arbitrary $\xi, \psi$, since
they are evaluated outside the critical strip if $\wtj \geq 4$, and at
the edge of the critical strip if $\wtj = 3$ (see, e.g., Proposition~2
of~\cite{ShimuraSpecialValues}, or more
generally~\cite{JacquetShalikaNonvanishing}).  Thus we can also ensure
that $\chi\xi\psi \neq 1$ as
needed when $\wtj=3$.  On the other hand, if $\wtj=2$, then,
by Theorem~2 of~\cite{ShimuraPeriods}, there exist $\xi$ and $\psi$ for
which the right side of~\eqref{equation5.1} is nonzero.

Now assume there exists a nonzero cuspform $0 \neq f \in
\cuspforms_\wtj(\Gammal)$ in the orthogonal complement
$[\ringRl \intersect \cuspforms_\wtj(\Gammal)]^\perp$.
Then there exist constants $c_1,\dots, c_N \in \C$
and matrices $\alpha_1, \dots, \alpha_N \in GL^+(2,\Q)$ such that the
linear combination $F = \sum_i c_i f| \alpha_i$ is a newform (for
instance, use an element of the Hecke algebra to project to a single
automorphic representation, and then move around within it to reach the
newform).  We can find $\Eis, \Eis'$ as above such that
$\langle F, \Eis\Eis' \rangle \neq 0$.  But then
\begin{equation}
\label{equation5.2}
 0 \neq \langle \sum_i c_i f| \alpha_i, \Eis \Eis' \rangle  
 = \sum_i c_i \langle f, (\Eis \Eis')|\alpha_i^{-1} \rangle.
\end{equation}
In the above expression, each form $(\Eis \Eis')|\alpha_i^{-1}
= (\Eis|\alpha_i^{-1})(\Eis'|\alpha_i^{-1})$ is still
the product of an Eisenstein series of weight $1$ with an Eisenstein series
of weight $j-1 \in \{1,2\}$; hence it can be written as a linear
combination of modular forms of the form $\lambdaP \lambdaQ$ or $\lambdaP
\xQ$, with $\pointP, \pointQ \in \E[n\myl]$ for some (possibly rather
large) $n$.  We obtain a linear combination of inner products of the
form $\langle f, \lambdaP \lambdaQ \rangle$ or
 $\langle f, \lambdaP \xQ \rangle$, which can in turn be
reexpressed (up to a constant factor) as an inner product of the form
$\langle f, \tr^{\Gamma(n\myl)}_\Gammal \lambdaP \lambdaQ\rangle$
or $\langle f, \tr^{\Gamma(n\myl)}_\Gammal \lambdaP \xQ\rangle$,
and the traces belong to $\ringRl$ by Propositions \ref{proposition4.8}
and~\ref{proposition4.11}.  Furthermore, the Eisenstein part of each
such trace, and therefore also the cuspidal part, must then belong to
$\ringRl$.  Thus the inner products must all be zero if $f$ belongs to the
orthogonal complement $[\ringRl \intersect \cuspforms_\wtj(\Gammal)]^\perp$
in question.  This contradicts the fact that $\langle F, \Eis \Eis' \rangle
\neq 0$, and we deduce that the orthogonal complement is zero after all.
This concludes our proof.
\end{proof}

Theorem~\ref{theorem5.1} allows us to compute nice
models for the modular curve $X(\myl)$.  These models
are in the form called ``Representation~B'' in~\cite{KKMasymptotic},
which we now describe, along with the related ``Representation~A''.
In our application, $X = X(\myl)$, while $\LLhat = \LL^{\tensor2}$, in
the notation of Theorem~\ref{theorem5.1}; 
thus $V = \modforms_2(\Gammal)$ and $V' = \modforms_4(\Gammal)$. 

\begin{definition}
\label{definition5.2}
Let $X$ be a smooth genus $g$ projective curve over a perfect base
field $F$, and let $\LLhat$ be an $F$-rational line bundle on $X$ with
$\deg \LLhat \geq 2g+2$.  Define vector spaces $V, V'$ and 
multiplication maps $\mu, \mubar$ (where $\mu$ factors through
$\mubar$) by 
\begin{equation}
\label{equation5.3}
V = H^0(X,\LLhat),
\quad
V' = H^0(X,\LLhat^{\tensor2}),
\quad
\mu: V \tensor V \to V',
\quad
\mubar: \Sym^2 V \to V'.
\end{equation}
It is a standard fact that $\LLhat$ gives rise to a projective
embedding of $X$ into $\Projective^L$ ($L = \deg \LLhat - g$),
such that the ideal of equations defining the image of 
$X$ is generated by quadrics (see, for example, Section~1
of~\cite{Lazarsfeld}).  These quadrics correspond precisely to the
kernel of $\mubar$.  We thus define Representation~A of the curve $X$
to be a pair of identifications $V \isomorphic F^{L+1}$ and $V'
\isomorphic F^{L'}$, with the map $\mu$ described by a
multiplication table in terms of the coordinates on $F^{L+1}$ and
$F^{L'}$.  This suffices to determine $\ker \mubar$, and with it
defining equations for $X$ in $\Projective^L$.

As for Representation~B, we do away with the description of $\mu$,
essentially by interpolating through sufficiently many points of
$X(\overline{F})$.
Specifically, let $\divD$ be an $F$-rational effective divisor on $X$,
with $N = \deg \divD > 2\deg \LLhat$.  (The reader should imagine
that $\divD = \sum_{i=1}^N (\p_i)$ with the $\{\p_i\}$ distinct points in
$X(\overline{F})$, but in general points may occur with multiplicity;
see Remark~\ref{remark5.3} below.)  By our choice of $\deg \divD$,
evaluation of sections ``at $\divD$'' gives injections of $F$-vector spaces
\begin{equation}
\label{equation5.4}
H^0(X,\LLhat) \hookrightarrow H^0(X,\LLhat/\LLhat(-\divD)),
\quad
H^0(X,\LLhat^{\tensor2}) \hookrightarrow
H^0(X,\LLhat^{\tensor2}/\LLhat^{\tensor2}(-\divD)).
\end{equation}
Moreover, one can (by choosing a suitable trivialization near $\divD$)
identify each of $H^0(X,\LLhat/\LLhat(-\divD))$ and 
$H^0(X,\LLhat^{\tensor2}/\LLhat^{\tensor2}(-\divD))$ with the
($\deg \divD$)-dimensional $F$-algebra $\mathcal{A} =
H^0(X,\mathcal{O}_\divD)$, in such a way that $\mu$ is induced by the
multiplication on $\mathcal{A}$.  We then define Representation~B
of $X$ to be the data of the algebra $\mathcal{A}$, along with
subspaces $V,V' \subset \mathcal{A}$.

We describe concretely what this means if $\divD = \sum_i (\p_i)$ with
distinct $\p_i \in X(F)$ (the case where only the divisor $\divD$
is rational over $F$, but the individual points are not, is a
``descent'' of this).  We then have $\mathcal{A} \isomorphic F^N =
F\times \dots \times F$, the split $N$-dimensional algebra.  Given a
trivialization of $\LLhat$ near the $\{p_i\}$, the injections
$V,V' \hookrightarrow \mathcal{A}$ identify a section $s$ 
with its vector of ``values'' $(s(\p_1), \dots, s(\p_n)) \in
\mathcal{A}$.  Thus each of $V,V'$ is a subspace of $F^N$, and
$\mu$ is componentwise multiplication.

We now relate this to the projective embedding.  
Let $\{s_0, \dots, s_L\}$ be a basis for $V$;
then each point $\p_i \in X$ maps to the projective point
$\p'_i = [s_0(\p_i):\cdots:s_L(\p_i)] \in \Projective^L$.  (Thus if
one considers the matrix $(s_j(\p_i))_{ij}$, the rows give points on
the image of $X$, while the columns span the subspace $V \subset
\mathcal{A}$.)   
Then the image of $X$ is the unique projective curve that interpolates
the $\{\p'_i\}$, in the sense that its ideal is generated by the quadric
equations vanishing at the $\{\p'_i\}$.  These quadrics are of the
form $\sum_{j,k} c_{jk} X_j X_k$, and can be found by solving for the
$c_{ij}$ in the linear system
$\{\sum_{j,k} c_{jk} s_j(\p_i) s_k(\p_i) = 0 \mid 1\leq i \leq N\}$.
Even if the individual $\p_i$ are not defined over
$F$, the set of points $\{\p'_i\}$ is stable under
$\Gal(\overline{F}/F)$, and so the linear system of equations for the
$c_{jk}$ is unaffected by the Galois group.  So even if we do the
linear algebra over $\overline{F}$, the echelon basis for the
solution space of the linear system will be defined over $F$, and we
will obtain $F$-rational quadrics that define the image of $X$.
\end{definition}

\begin{remark}
\label{remark5.3}
In our desired application, $V$ and $V'$ are spaces of modular forms,
and one option is to take
$\divD = N\cdot\infty$, where the cusp $\infty$ is assumed $F$-rational;
then $\mathcal{A} = F[[q]]/(q^N)$, and the vector of values of each
section $s_j \in \modforms_2(\Gammal)$ is a truncated $q$-expansion.
Linear relations between $q$-expansions of products $s_j s_k$ then
give rise to equations for $X(\myl)$, as discussed above. 
This approach has already appeared in the literature;
see~\cite{Galbraiththesis} and Section~2 of~\cite{BakerGonzalez2Poonen},
which instead use $\cuspforms_2(\Gamma)$ and products in
$\cuspforms_4(\Gamma)$ to obtain the canonical embedding of
$X(\Gamma)$ in most cases.  Occasionally $X$ is hyperelliptic, or the
canonical curve is not defined by quadrics, in which case they go to
higher weights.

One novel aspect of our approach is that we evaluate the modular forms at
noncuspidal points; we hope that this approach, suitably developed, can
eventually also yield equations of Shimura curves.
\end{remark}

We are now ready for the last result of this article.

\begin{theorem}
\label{theorem5.5}
Let $\myl \geq 3$.  Fix a number field $F \subset \C$ and an elliptic
curve $\E^\zirc$ over $F$ given by a Weierstrass equation
$y^2 = x^3 + a^\zirc x + b^\zirc$, with 
$a^\zirc, b^\zirc \in F - \{0\}$.  Then consider all torsion points 
$\{(x^\zirc_{\pointP}, y^\zirc_{\pointP})
     \mid \pointP \in \E^\zirc[\myl](\overline{F}) - \{\Pzero\}\}$,
and the slopes 
$\thelambda^\zirc_{(\pointP)+(\pointQ)+(\ominus \pointP \ominus \pointQ)} =
(y^\zirc_{\pointP} - y^\zirc_{\pointQ})/(x^\zirc_{\pointP} - x^\zirc_{\pointQ})
      \in F(\E^\zirc[\myl])$
of lines through pairs of torsion points (with the appropriate modification
when $\pointP = \pointQ$).  These slopes for the one elliptic curve $\E^\zirc$
contain enough information to reconstruct the projective embedding of
$X(\myl)$ coming from the linear system $\modforms_2(\Gammal)$.  This
embedding is defined over $F(\boldmu_\myl)$.
\end{theorem}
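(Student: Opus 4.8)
The plan is to deduce this from Theorem~\ref{theorem5.1} together with the ``Representation~B'' machinery of Definition~\ref{definition5.2}, using the algebraic description of $\ringRl$ from Section~\ref{section3}. Work over $\fieldk = F(\boldmu_\myl)$, write $X = X(\myl)$, and set $\LLhat = \LL^{\tensor 2}$, so that $\modforms_2(\Gammal) = H^0(X, \LLhat)$ and $\modforms_4(\Gammal) = H^0(X, \LLhat^{\tensor 2})$. Since $\Gammal$ has no elliptic elements or irregular cusps and has $\kappa \geq 4$ cusps for $\myl \geq 3$, the degree of $\LLhat$ is $2g - 2 + \kappa \geq 2g + 2$; hence $\LLhat$ defines a projectively normal closed embedding $\phi\colon X \hookrightarrow \Projective^L$, with $L + 1 = \dim\modforms_2(\Gammal)$, whose homogeneous ideal is generated by quadrics --- precisely by the kernel of $\Sym^2\modforms_2(\Gammal) \to \modforms_4(\Gammal)$ --- as recalled before Definition~\ref{definition5.2} (cf.~\cite{Lazarsfeld}). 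By Theorem~\ref{theorem5.1}, together with a standard flat base change argument from $\C$ down to $F(\boldmu_\myl)$ (both $\ringRl$ and the full ring of modular forms, viewed as in Remark~\ref{remark3.3} as the integral closure of $\ringR_1$, being compatible with field extensions), we have $\modforms_2(\Gammal), \modforms_4(\Gammal) \subset \ringRl$ over $F(\boldmu_\myl)$. Fix a basis $f_0, \dots, f_L$ of $\modforms_2(\Gammal)$; by Theorem~\ref{theorem3.12}, each $f_i$ --- like every element of $\ringRl$, including $\acoeff$, $\bcoeff$, and all the $\xP$, $\yP$ --- is a polynomial, with coefficients in $F(\boldmu_\myl)$, in the slopes $\lambdaD$ for $\divD = (\pointP) + (\pointQ) + (\pointR) \linequiv 3(\Pzero)$.

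Next I would supply the points at which to evaluate and check that evaluation is purely algebraic. Since the Weil pairing is surjective onto $\boldmu_\myl$, over $L_0 := F(\E_0[\myl])$ the set $\mathcal{B}$ of bases $\beta = (\pointP_\beta, \pointQ_\beta)$ of $\E_0[\myl]$ with $e_\myl(\pointP_\beta, \pointQ_\beta)$ equal to one fixed primitive $\myl$th root of unity is an $SL(2,\Z/\myl\Z)$-torsor. Each $\beta \in \mathcal{B}$ gives a noncuspidal point $\p_\beta = (\E_0, \omega_0, \beta) \in X(L_0)$, where $\omega_0 = dx/(2y)$; because $\E_0$ is nonsingular, evaluation of any modular form $f \in \ringRl$ at $\p_\beta$ is defined, and, by the previous paragraph, it is computed by specializing the polynomial expression of $f$ in the $\lambdaD$ under $\lambdaD \mapsto \thelambda_{0,\divD} = (y_{0,\pointP} - y_{0,\pointQ})/(x_{0,\pointP} - x_{0,\pointQ})$, the slope of the corresponding line through torsion points of $\E_0$ (with the usual modification when two of the points coincide). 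Rescaling $\omega_0$ multiplies every weight-$2$ form by one common factor, so $[f_0(\p_\beta) : \cdots : f_L(\p_\beta)] = \phi(\p_\beta)$ is a well-defined point of $\Projective^L(L_0)$; thus the slopes on the single curve $\E_0$ determine all the coordinate vectors $\phi(\p_\beta)$. Finally, $a_0, b_0 \neq 0$ forces $j(\E_0) \neq 0, 1728$, so $\mathrm{Aut}(\E_0) = \{\pm 1\}$ and the fibres of $\beta \mapsto \p_\beta$ are exactly $\{\pm\beta\}$; hence $\mathcal{B}$ yields $(1/2)|SL(2,\Z/\myl\Z)|$ distinct points, which by the standard formulas for $X(\myl)$ (Section~1.6 of~\cite{Shimura}) comfortably exceeds $2\deg\LLhat$.

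It then remains to extract the equations exactly as in Remark~\ref{remark5.4}. Put $\divD_0 = \sum_\beta (\p_\beta)$, one term per distinct point, so that $\deg\divD_0 > 2\deg\LLhat = \deg\LLhat^{\tensor 2}$ and hence $H^0(X, \LLhat^{\tensor 2}(-\divD_0)) = 0$. Consequently a quadratic form $\sum_{j,k} c_{jk} X_j X_k$ vanishes at every $\p_\beta$ if and only if its image under $\Sym^2\modforms_2(\Gammal) \to \modforms_4(\Gammal)$ is zero, i.e.\ if and only if it lies in the degree-$2$ part of the homogeneous ideal of $\phi(X)$; since that ideal is generated in degree~$2$, the equations we seek are exactly the solutions of the finite linear system $\{\sum_{j,k} c_{jk}\, f_j(\p_\beta)\, f_k(\p_\beta) = 0 \mid \beta \in \mathcal{B}\}$ in the symmetric matrices $(c_{jk})$, whose entries have just been exhibited as polynomials in the slopes $\thelambda_{0,\divD}$. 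For the field of definition, a Galois automorphism in $\Gal(\overline{F}/F(\boldmu_\myl))$ fixes the chosen root of unity, hence permutes $\mathcal{B}$ and correspondingly the points $\p_\beta$, so $\divD_0$ is $F(\boldmu_\myl)$-rational and the above linear system (grouped over Galois orbits) has coefficients in $F(\boldmu_\myl)$; an echelon basis of its solution space then gives quadrics over $F(\boldmu_\myl)$ cutting out $\phi(X)$. I expect the points requiring real care to be the two descent assertions --- that the conclusion of Theorem~\ref{theorem5.1} and the projective-normality and quadric-generation properties of $\LLhat$ persist over $F(\boldmu_\myl)$ --- and the verification that the purely algebraic specialization at $\E_0$ genuinely computes the geometric value $\phi(\p_\beta)$, including its independence of the trivialising differential; the remaining steps are bookkeeping with Representation~B.
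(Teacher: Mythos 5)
Your proposal follows essentially the same route as the paper's own proof: take the fibre of $X(\myl)\to X(1)$ over the point corresponding to $\E_0$ (indexed by symplectic $\myl$-torsion bases with fixed Weil pairing), use Theorems~\ref{theorem5.1} and~\ref{theorem3.12} to write a basis of $\modforms_2(\Gammal)$ as polynomials in the $\lambdaD$ and evaluate them at these points via the slopes on $\E_0$, then recover $X(\myl)$ as the quadrics through the resulting projective points, with Galois descent to $F(\boldmu_\myl)$ coming from the $SL(2,\Z/\myl\Z)$-equivariance of the whole setup. The only cosmetic differences are that you check $\deg\divD_0 > 2\deg\LLhat$ by counting $\abs{PSL(2,\Z/\myl\Z)}$ directly while the paper uses the weight-$12$ form $b_0^2\acoeff(\tau)^3 - a_0^3\bcoeff(\tau)^2$, and that you dispose of the trivialization issue by noting that rescaling $\omega_0$ changes all weight-$2$ values by a common factor, where the paper introduces the explicit scalar $\scalarlambda$; both points you flag as needing care (descent of Theorem~\ref{theorem5.1} from $\C$ and well-definedness of the algebraic evaluation) are genuine but are handled in the paper by the same implicit linear-algebra and rescaling arguments you indicate.
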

\begin{proof}
The condition $a^\zirc, b^\zirc \neq 0$ implies that $\E^\zirc$ 
does not correspond to an elliptic point for $\Gamma(1)$ in the upper
half plane $\Half$.  Thus the projection map $\pi: X(\myl) \to  X(1)$
is unramified over the point  $q^\zirc \in X(1)(F)$ corresponding to 
$\E^\zirc$, and hence the preimages
$\{\p_1, \dots, \p_N\} = \pi^{-1}(\{q^\zirc\})$
are distinct points of $X(\myl)$, which are rational over the
field $F_\myl = F(\E^\zirc[\myl])$.  We claim that $N$
(which is $\abs{PSL(2,\Z/\myl\Z)}$)
is large enough that we can identify modular forms of weight~$< 12$
via their ``values'' at the $\{\p_i\}$.  To see this claim, either use
standard formulas for the degree of the line bundle $\LL^{\tensor
  \wtj}$, whose global sections are $\modforms_\wtj(\Gammal)$, or note
that one section of the line bundle $\LL^{\tensor 12}$ is the
$\Gamma(1)$-invariant modular form
$(b^\zirc)^2 a(\tau)^3 - (a^\zirc)^3 b(\tau)^2$, 
which vanishes precisely to order~$1$ at each point $\p_i$; 
indeed, modular forms in $\modforms_{12}(\Gamma(1))$
have precisely one zero (counted appropriately) in the fundamental domain
for the $\Gamma(1)$-action on~$\Half$.  Thus $N = 12 \deg \LL$, and
our claim is proved.

Hence, as described earlier, we can
represent $X(\myl)$ in Representation~B using the line bundle 
$\LLhat = \LL^{\tensor 2}$ and the divisor $\divD = \sum_i (\p_i)$; 
thus we represent the spaces $V, V'\subset F_\myl^N$ by vectors of
``values'' of modular 
forms of weights $2$ and~$4$ at the points $\{\p_i\}$.  Concretely, such a
point $\p_i$ corresponds to a choice of symplectic basis
$\{T,U\}$ for the $\myl$-torsion $\E^\zirc[\myl]$, with
$e_\myl(T, U) = e^{2\pi i/\myl} \in F_\myl$.  We know
how to ``evaluate'' an Eisenstein series of weight~$1$ at $\p_i$:
just compute slopes between the torsion points to get each
$\lambda^\zirc \in F_\myl$ in the statement of the theorem.  Here, the
local trivialization of each line bundle $\LL^{\tensor \wtj}$ near
$\p_i$ corresponds to the particular choice of Weierstrass model of
$\E^\zirc$ and of its global differential $\omega^\zirc$.  To define
this trivialization more precisely, let $\tau_{T,U} \in \Half$ be such
that the elliptic curve 
$\E_{\tau_{T,U}} = \C/L_{\tau_{T,U}}$ and its symplectic $\myl$-torsion
basis $\{\pointP_{1/\myl}, \pointP_{\tau_{T,U}/\myl}\}$ are isomorphic
to our given triple $(\E^\zirc, T, U)$.
(The $\tau_{T,U}$
all belong to a single $\Gamma(1)$-orbit, determined by $\E^\zirc$.)
Then there exists a unique $\scalarlambda \in \C^\times$ such 
that $a^\zirc = \scalarlambda^4 \acoeff(\tau_{T,U})$ and 
$b^\zirc = \scalarlambda^6 \bcoeff(\tau_{T,U})$,
with a similar compatibility between the level structures.
Hence each $\lambda^\zirc$ is equal to $\scalarlambda
\lambda_1(\tau_{T,U})$ for a corresponding classical modular form 
$\lambda_1(\tau) \in \Espace_1^{\myl}$, and similarly for other
weights~$\wtj$.  It follows that our trivialization of $\LL^{\tensor \wtj}$
near $\p_i$ is $u^\wtj$ times the trivialization induced by evaluating
modular forms in a neighborhood of $\tau_{T,U}$.

Hence (at least over $F_\myl$), we have $\mathcal{A} \isomorphic
F_\myl^N$,  and we compute the subspace $V$ (respectively, $V'$)
as the span of all componentwise products of vectors of values of two
(respectively, four) of the $\lambda^\zirc$s at each $p_i$.  This
follows from Theorems \ref{theorem5.1} and~\ref{theorem3.12}.  We thus
obtain equations for $X(\myl)$ from interpolation, or equivalently from 
$\ker (\mubar:\Sym^2 V \to V')$.  These equations are actually defined
over the smaller cyclotomic extension $F(\boldmu_\myl)$, because
$\Gal(F_\myl/F(\boldmu_\myl))$ acts via a subgroup of $SL(2,\Z/\myl\Z)$ on
symplectic bases for $\E^\zirc[\myl]$, thereby permuting the
points $\{\p_i\}$ in $\divD$.  As a final computational note, one
should not in practice 
list the value in $F_\myl$ at every single $\p_i$ or carry out the
linear algebra over the $F_\myl$: instead, one remains over
$F(\boldmu_\myl)$, in 
which case Galois conjugates of the $\p_i$ get lumped together, and
$\mathcal{A}$ becomes an \'etale $F(\boldmu_\myl)$-algebra.  One
evaluates each 
$\lambda^\zirc$ at a single ``virtual'' symplectic basis $\{T,U\}$ in
$\E^\zirc[\myl](\mathcal{A})$, which yields a value in $\mathcal{A}$;
the products (in $\mathcal{A}$) of pairs of these values span $V$ over
$F(\boldmu_\myl)$.  The quadric equations defining $X(\myl)$
are then the $F(\boldmu_\myl)$-linear relations between the various
products of pairs of elements of a basis for $V$.
\end{proof}

We note in closing that an analog of Theorem~\ref{theorem5.5} holds
for the projective embedding of $X(\myl)$ coming from the (usually
incomplete) linear system 
$\Espace_1^\myl \subset \modforms_1(\Gammal)$.  By
Theorem~\ref{theorem5.1} and a computation of Castelnuovo-Mumford
regularity, that projective model is defined by equations in degrees
$2$ and~$3$.


\providecommand{\bysame}{\leavevmode\hbox to3em{\hrulefill}\thinspace}
\providecommand{\MR}{\relax\ifhmode\unskip\space\fi MR }
\providecommand{\MRhref}[2]{%
  \href{http://www.ams.org/mathscinet-getitem?mr=#1}{#2}
}
\providecommand{\href}[2]{#2}


\end{document}